\theoremstyle{plain}
\newtheorem{thm}{Theorem}
\newtheorem{lem}{Lemma}
\newtheorem{Bob}{Definition}
\newtheorem{prop}{Proposition}
\theoremstyle{definition}
\newtheorem{rem}{Remark}
\newtheorem{ques}{Question}
\newcommand{\Leb}{\ensuremath{\lambda}}
\newcommand{\LS}{\ensuremath{\underset{n=1}{\overset{\infty}{\cap}} \, {\underset{i=n}{\overset{\infty}{\cup}}}\,}}
\newcommand{\LSk}{\ensuremath{\underset{n=1}{\overset{\infty}{\cap}} \, {\underset{k=n}{\overset{\infty}{\cup}}}\,}}
\newcommand{\LI}{\ensuremath{\underset{n=1}{\overset{\infty}{\cup}} \, {\underset{i=n}{\overset{\infty}{\cap}}}\,}}
\newtheorem{cor}{Corollary}
\begin{document}
\author[J.\ Chaika]{Jon Chaika}
\email{jonchaika@math.uchicago.edu}
\title{Skew products over rotations with exotic properties}
\maketitle In \cite{vskew1} W. Veech  showed that a
$\mathbb{Z}_2$ skew product of an irrational rotation can be minimal
and not uniquely ergodic. He showed that this is possible when the skewing function is 
the characteristic function of an interval if and only if the irrational number is not badly
approximable. Recall that $\alpha$ is badly approximable if
${\underset{n \to \infty}{\liminf} \, n d(n\alpha, \mathbb{Z})>0}$.
Additionally, in a subsequent paper Veech \cite{vskew2} showed that
skewing a badly approximable rotation by a finite group action
 over any finite number of intervals with rational endpoints still provided a uniquely ergodic transformation. In the first remark \cite[Page 241]{vskew2} he wondered whether this was true if one dropped the assumption that the endpoints were rational (which he had already shown for the case of one skewing interval).
  The goal of this paper is to answer this question by showing that a $\mathbb{Z}_2$ skew product of a badly approximable
  rotation can be minimal but not uniquely ergodic.
   This paper will also present properties of this construction and an application to $\mathbb{Z}$ skew products of rotations.

The following two theorems and their corollaries are the main results of this paper.
\begin{thm} \label{main} There exists a $\mathbb{Z}_2$ skew product of a badly approximable rotation over two intervals that is minimal and not uniquely ergodic.
\end{thm}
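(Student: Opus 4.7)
The plan is to first reduce the theorem to a non-unique-ergodicity question by observing that minimality of $T_\phi$ is automatic whenever $\phi\not\equiv 0$. If $K\subsetneq\mathbb{T}\times\mathbb{Z}_2$ is closed and $T_\phi$-invariant, the intersection $A_0\cap A_1$ of its two fibers $A_y=\{x:(x,y)\in K\}$ is closed and $R_\alpha$-invariant, hence empty or all of $\mathbb{T}$ by minimality of $R_\alpha$; the latter gives $K=\mathbb{T}\times\mathbb{Z}_2$, and in the empty case the connectedness of $\mathbb{T}$ and the closedness of $A_0,A_1$ force $K=\mathbb{T}\times\{0\}$ (up to relabeling), which is $T_\phi$-invariant only when $\phi\equiv 0$. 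Hence the task reduces to constructing two intervals $I_1,I_2\subset\mathbb{T}$ such that $\phi=\mathbf{1}_{I_1}+\mathbf{1}_{I_2}\pmod 2$ is a nonzero \emph{measurable} $\mathbb{Z}_2$-coboundary, i.e., there is a measurable $\epsilon:\mathbb{T}\to\mathbb{Z}_2$ with $\epsilon\circ R_\alpha-\epsilon=\phi$ a.e. The graph of such an $\epsilon$ then supports a second $T_\phi$-invariant probability measure (distinct from Lebesgue $\times$ counting), yielding non-unique ergodicity.

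Next I would construct $I_1,I_2$ and the potential $\epsilon$ jointly by an inductive scheme indexed by the continued-fraction convergents $p_k/q_k$ of $\alpha$. At stage $k$ one chooses intervals $I_j^{(k)}$ with endpoints within $O(1/q_k)$ of those at stage $k-1$, together with a potential $\epsilon_k:\mathbb{T}\to\mathbb{Z}_2$ satisfying $\epsilon_k\circ R_\alpha-\epsilon_k=\phi^{(k)}\pmod 2$ outside an error set of measure $\eta_k$ with $\eta_k\to 0$. The key analytic input is the Denjoy-Koksma inequality, which, since $\alpha$ is badly approximable (so $q_{k+1}\leq Cq_k$), gives the uniform bound $|S_{q_k}\phi(x)-q_k\int\phi|\leq V(\phi)\leq 4$ for every $x$. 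Combined with the Rokhlin tower of height $q_k$ over a base of length $\approx 1/q_k$, this gives precise control over the parity of the partial cocycle $S_n\phi(x)$ across the tower, which can then be adjusted by a localized perturbation of the four interval endpoints. Passing to a limit yields $I_1,I_2$ with irrational endpoints and a measurable $\epsilon$ solving the coboundary equation for the limiting $\phi$.

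The main obstacle is closing this induction under bad approximability. With $q_{k+1}/q_k$ bounded one has only polynomial, not exponential, room between consecutive scales; this is exactly the constraint under which \cite{vskew2} obtained unique ergodicity when only one endpoint was free (the other being rational). The new ingredient is the extra continuous parameter provided by the second interval: with two independent endpoints to tune, one has a two-parameter family of perturbations at each scale, and the central combinatorial lemma of the proof will be that this family always contains a perturbation shrinking the error set by a definite factor. Verifying this will require a careful Diophantine/Ostrowski-expansion analysis of how the cocycle parity $S_n\phi(x)\pmod 2$ depends on endpoint positions within the $q_k$-tower, together with a summability argument ensuring that the endpoint corrections are Cauchy so that the limit intervals, and hence the limit cocycle $\phi$, genuinely exist.
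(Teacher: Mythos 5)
There are two genuine gaps. The first is that your opening reduction is false: minimality of the skew product is \emph{not} automatic for $\phi\not\equiv 0$. Take $\epsilon=\chi_{[0,1/2)}$ and $\phi=\epsilon\circ R_\alpha-\epsilon \pmod 2=\chi_{[1-\alpha,1)}+\chi_{[1/2-\alpha,1/2)}$, a nonzero sum of two interval indicators; the graph $G=\{(x,\epsilon(x))\}$ satisfies $T_\phi(G)=G$ pointwise, every orbit starting in $G$ stays in the non-dense set $G$, and the system is not minimal. Your dichotomy argument proves something about closed sets $K$ with $T_\phi(K)\subseteq K$, but since $T_\phi$ is discontinuous, orbit closures need not be invariant in that sense (in the example $T_\phi(\overline{G})\not\subseteq\overline{G}$), so ``no proper closed invariant set'' does not yield ``all orbits dense.'' This is not a peripheral slip: the whole tension of the theorem is that $\phi$ must be a measurable coboundary (to destroy unique ergodicity) while the transfer function is so badly discontinuous that its graph is dense and minimality survives. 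The paper proves minimality separately and nontrivially, by showing the orbit of $(0,0)$ is syndetically dense (Lemma \ref{small shift}, Corollary \ref{dense orbit}) and then invoking the piecewise-isometry structure.

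The second gap is that the construction itself --- the actual content of the theorem --- is not carried out. You correctly identify the target (a nonzero measurable $\bmod\ 2$ coboundary given by two intervals over a badly approximable $\alpha$) and sketch a plausible inductive scheme, but you explicitly defer the ``central combinatorial lemma'' that a two-parameter endpoint perturbation always shrinks the error set by a definite factor under the constraint $q_{k+1}\leq Cq_k$; that is exactly where Veech's obstruction lives and where the real work is. For comparison, the paper's route is explicit rather than perturbative: it takes $J=[0,\sum_i c_i\alpha)$ and $J'=y+J$ with $y=\alpha+\sum_i b_i\alpha$, $c_i=q_{10^i}$, $b_i=q_{2\cdot 10^i}$, so that each finite-stage approximation $T_k$, $S_k$ is an exact (non-minimal) coboundary-type system, consecutive stages differ on sets of summable measure (Lemmas \ref{balance restored} and \ref{balance restored 2}), and $(0,0)$, $(0,1)$ are generic for two distinct ergodic measures in the limit. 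Until you either prove your shrinking lemma or exhibit explicit intervals, and separately establish minimality for the resulting system, the proposal is a plan rather than a proof.
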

\begin{cor} \label{dio} There exists a minimal, non-uniquely ergodic IET $T$ and a constant $c>0$ such that $\underset{n>0}{\inf}n \, |T^nx-x|>c$ for all $x \in [0,1)$.
\end{cor}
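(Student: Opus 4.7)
The plan is to realise the skew product supplied by Theorem \ref{main} as an IET on $[0,1)$ and then to read the Diophantine lower bound directly off the structure of the branches of the skew product.

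First, I would take the skew product from Theorem \ref{main}, acting on $[0,1)\times\mathbb{Z}_2$ by $(x,i)\mapsto (x+\alpha\bmod 1,\;i+\chi_{I_1\cup I_2}(x)\bmod 2)$ with $\alpha$ badly approximable. Unfolding $[0,1)\times\mathbb{Z}_2$ onto $[0,2)$ via $(x,0)\mapsto x$ and $(x,1)\mapsto x+1$ presents it as a piecewise translation $S$ of $[0,2)$ with finitely many break points (the two rotation wrap points together with the endpoints of $I_1,I_2$ in each copy), hence as an IET. Let $T$ be the rescaling of $S$ by $\tfrac12$ to an IET on $[0,1)$; it is minimal and not uniquely ergodic because both properties transfer under the measure-preserving identification.

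Next, I would check by a split across branches (is $y$ in $I_1\cup I_2$ or its translate by $1$; does the underlying rotation wrap?) that every branch of $S$ has the form $y\mapsto y+\alpha+k$ with $k\in\{-2,-1,0,1\}$. Iterating gives $S^n(y)-y=n\alpha+K_n(y)$ for some integer $K_n(y)$, and therefore
\[
|S^n(y)-y|\;\geq\;\min_{K\in\mathbb{Z}}|n\alpha+K|\;=\;d(n\alpha,\mathbb{Z})
\]
for every $y\in[0,2)$ and every $n\geq 1$. After the $\tfrac12$-rescaling, $|T^n x-x|\geq \tfrac12\,d(n\alpha,\mathbb{Z})$ for every $x\in[0,1)$.

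Since $\alpha$ is badly approximable, there exists $c_0>0$ with $n\,d(n\alpha,\mathbb{Z})\geq c_0$ for all $n\geq 1$, so the corollary holds with, say, $c=c_0/3$. There is no serious obstacle beyond Theorem \ref{main}: the key point is that the $\mathbb{Z}_2$ fiber swap acts on the unfolded interval as a translation by $\pm 1$, so each branch of $S$ lies in the coset $y\mapsto y+\alpha+\mathbb{Z}$, which is precisely what pins $S^n(y)-y$ to an integer translate of $n\alpha$ and makes the badly approximable hypothesis on $\alpha$ transfer verbatim to the orbit displacements of $T$.
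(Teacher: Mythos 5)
Your proposal is correct and follows essentially the same route as the paper, which likewise realizes the $\mathbb{Z}_2$ skew product of Theorem \ref{main} as an IET on $[0,2)$ (a $9$-interval IET in the paper's description) and reads the bound off the fact that every branch is a translation by $\alpha$ plus an integer, so $T^n x - x \equiv n\alpha \pmod 1$ and the badly approximable hypothesis applies directly. The only cosmetic difference is your explicit rescaling to $[0,1)$ and the bookkeeping of the offsets $k\in\{-2,-1,0,1\}$, which the paper leaves implicit.
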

Corollary \ref{dio} answers a question of M. Boshernitzan.
    \begin{thm}\label{Z skew 1}
    There exists a  $\mathbb{Z}$ skew product of a badly approximable rotation over two intervals that has the following properties.
 \begin{enumerate}
 \item The full orbit of almost every point has the values taken in the second coordinate  bounded from below.
 \item Lebesgue measure is preserved but not ergodic.
 \item The ergodic measures absolutely continuous with respect to Lebesgue measure are finite.
 \end{enumerate}
 \end{thm}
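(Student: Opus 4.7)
The plan is to inherit the structure of Theorem~\ref{main}: keep the badly approximable rotation $R_\alpha$ and the same pair of intervals $I_1,I_2$, arranged (we may also assume) with $|I_1|=|I_2|$, and replace the $\mathbb{Z}_2$-valued skewing function $f\equiv\chi_{I_1}+\chi_{I_2}\pmod 2$ by its natural integer lift $\tilde f=\chi_{I_1}-\chi_{I_2}$. On $[0,1)\times\mathbb{Z}$ set $\tilde T(x,n)=(x+\alpha,\,n+\tilde f(x))$. Since $\tilde f$ depends only on $x$ and has zero integral, the product of Lebesgue measure on $[0,1)$ with counting measure on $\mathbb{Z}$ is automatically $\tilde T$-invariant, which is the preservation half of~(2).

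The crux is~(1): showing that the integer cocycle $S_n\tilde f(x):=\sum_{k=0}^{n-1}\tilde f(x+k\alpha)$, extended to negative $n$ via the cocycle identity, is bounded from below for almost every $x$. This is equivalent to $\tilde f$ being a measurable coboundary $\tilde f=\tilde\psi\circ R_\alpha-\tilde\psi$ with $\tilde\psi:[0,1)\to\mathbb{Z}$ essentially bounded from below. From Theorem~\ref{main} one already has the $\mathbb{Z}_2$ coboundary witness $\psi:[0,1)\to\{0,1\}$ with $f=\psi\circ R_\alpha-\psi\pmod 2$. The task is to revisit the nested interval construction of Theorem~\ref{main} and upgrade it so that $\psi$ admits an a.s.\ finite, bounded-below integer lift $\tilde\psi$: at each stage of the iterative refinement of $I_1,I_2$ one pairs positive and negative increments of $\tilde f$ along rotation orbits so that the maximum possible downward excursion of the partial cocycle contributed at stage $k$ has controlled measure, and the sum of these measures over $k$ is summable; a Borel--Cantelli argument then yields $\inf_n S_n\tilde f>-\infty$ almost surely. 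Quantifying this bookkeeping is the principal technical obstacle and is the step that uses the internal mechanics of the construction rather than merely its statement.

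Granted~(1), the remaining conclusions are formal. The cocycle identity gives $h(x+\alpha)=h(x)-\tilde f(x)$ for $h(x):=\inf_{n\in\mathbb{Z}}S_n\tilde f(x)$, so $L(x,n):=n+h(x)$ is $\tilde T$-invariant and $\mathbb{Z}$-valued on a full measure set. Each level set $\{L=m\}=\{(x,m-h(x)):x\in[0,1)\}$ is a measurable section of the fibration $[0,1)\times\mathbb{Z}\to[0,1)$ and therefore has Lebesgue mass exactly~$1$, so $L$ is non-constant modulo null sets and Lebesgue is not ergodic, completing~(2). On each level $\{L=m\}$ the map $\tilde T$ is conjugate, via $(x,m-h(x))\mapsto x$, to the uniquely ergodic rotation $R_\alpha$, so the restriction of Lebesgue to that level is ergodic. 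Any ergodic $\tilde T$-invariant measure absolutely continuous with respect to Lebesgue must be supported on an invariant set, and the invariance of $L$ forces such support to lie in a single level $\{L=m\}$; each such measure therefore has total mass exactly~$1$, which is~(3).
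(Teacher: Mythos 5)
Your reduction of parts (2) and (3) to part (1) is correct and in fact somewhat cleaner than the paper's route. The observation that $\inf_{n\in\mathbb{Z}}S_n\tilde f>-\infty$ a.e.\ forces $\tilde f=g\circ R_\alpha-g$ with $g=-\inf_{n\in\mathbb{Z}}S_n\tilde f\geq 0$ finite a.e., so that the invariant level function $L$ partitions $[0,1)\times\mathbb{Z}$ into graphs each carrying an ergodic probability measure, does give non-ergodicity of Lebesgue and finiteness of the absolutely continuous ergodic measures in one stroke. The paper instead works through the non-minimal approximants $\hat T_k,\hat S_k, F_k$ and obtains finiteness from the positive density of the return set $\{n:\pi_2(\hat T^n(x,0))=0\}$ (Corollary \ref{density}); your packaging bypasses that.

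The difficulty is that part (1) is the entire substance of the theorem, and your proposal does not prove it: you describe the needed pairing of positive and negative increments and then defer "quantifying this bookkeeping" as "the principal technical obstacle." That obstacle is exactly where the work lies, and it depends on the specific arithmetic of $J=[0,\sum_i c_i\alpha)$ and $J'=y+J$ with $y=\alpha+\sum_i b_i\alpha$, $c_i=q_{10^i}$, $b_i=q_{2\cdot10^i}$, not on a generic nested-interval construction; indeed you never specify which $I_1,I_2$ make the pairing possible. Concretely, the paper's argument for (1) is: (i) Lemma \ref{J first}, an induction over the approximating intervals $J_k$ and $y_k+J_k$ showing that each newly added piece of $J$ is visited by the orbit of $0$ strictly before the corresponding piece of $J'$, whence $\sum_{n=0}^N\chi_J(R^n0)-\chi_{J'}(R^n0)\geq 0$ for all $N$; (ii) the same nonnegativity for every $x\notin\bigcup_k(B_k\cup C_k)$, a set whose complement has positive measure by Corollary \ref{few move}, together with the bound $\pi_2(\hat T_k^n(x,0))\in\{0,1\}$ of Lemma \ref{skew}; (iii) the $R$-invariance of the property "$\inf_n S_n\tilde f(x)>-\infty$" plus ergodicity of $R$ to pass from positive measure to full measure. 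Your Borel--Cantelli sketch is a plausible skeleton for (ii)--(iii), but without an analogue of Lemma \ref{J first} and the explicit bounds on $\Leb(B_k),\Leb(C_k)$ there is no proof that the downward excursions are summably controlled, so the proposal as written establishes only the soft half of the theorem.
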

 \begin{cor} \label{func} There exists $f:[0,1)\to \mathbb{Z}$ with integral 0 such that $f$ is the difference of the characteristic function of two intervals
  and for almost every $x$ $$ \underset{N\to \infty}{\liminf} \underset{i=0}{\overset{N}{\sum}} f(R^ix)>-\infty \text{ while } \underset{N\to \infty}{\limsup} \underset{i=0}{\overset{N}{\sum}} f(R^ix)=+\infty.$$
 \end{cor}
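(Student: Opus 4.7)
\emph{Proof strategy for Corollary \ref{func}.} The plan is to apply Theorem \ref{Z skew 1}, extracting the cocycle $f$ from the $\mathbb{Z}$-skew product $T(x, n) = (Rx, n + f(x))$, and to translate the three properties of $T$ into the pointwise behavior of the Birkhoff sums $S_N(x) = \sum_{i=0}^{N-1} f(R^i x)$. Property (1) is literally the statement $\inf_{k \in \mathbb{Z}} S_k(x) > -\infty$ for a.e.\ $x$, which immediately gives $\liminf_{N \to \infty} S_N(x) > -\infty$ a.e. The main content of the corollary is the $\limsup$ assertion, which I plan to obtain by contradiction via the coboundary structure of $f$.

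I would set $h(x) = \inf_{k \in \mathbb{Z}} S_k(x)$, a measurable, integer-valued, nonpositive function that is finite a.e.\ by~(1). The cocycle identity $S_k(Rx) = S_{k+1}(x) - f(x)$ yields $h \circ R = h - f$, so that $f = h - h \circ R$ is a measurable coboundary with transfer function $h$ and $S_N(x) = h(x) - h(R^N x)$. By unique ergodicity of $R$, for a.e.\ $x$ the forward orbit $\{R^N x\}_{N \geq 0}$ equidistributes in $[0,1)$, and a standard argument then shows $\inf_{N \geq 0} h(R^N x) = \mathrm{ess\,inf}\, h$ a.e.; hence $\sup_{N \geq 0} S_N(x) = h(x) - \mathrm{ess\,inf}\, h$ a.e., and this is $+\infty$ if and only if $\mathrm{ess\,inf}\, h = -\infty$.

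It remains to rule out the case $\mathrm{ess\,inf}\, h > -\infty$. Suppose it held; since $h$ is integer-valued and nonpositive, $h$ would take only finitely many values. For each $c \in \mathbb{Z}$ the set $\Gamma_c = \{(x, n) \in [0,1) \times \mathbb{Z} : n + h(x) = c\}$ is $T$-invariant, since $T(x, c - h(x)) = (Rx, c - h(x) + f(x)) = (Rx, c - h(Rx))$; the collection $\{\Gamma_c\}_{c \in \mathbb{Z}}$ partitions $[0,1) \times \mathbb{Z}$; each $\Gamma_c$ has Lebesgue measure $1$; and the bijection $(x, c - h(x)) \leftrightarrow x$ identifies $(\Gamma_c, T|_{\Gamma_c}, \mathrm{Leb}|_{\Gamma_c})$ with the ergodic rotation $([0,1), R, \mathrm{Leb})$. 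The family $\{\mathrm{Leb}|_{\Gamma_c}\}_{c \in \mathbb{Z}}$ is then an infinite collection of distinct ergodic $T$-invariant probability measures, each absolutely continuous with respect to Lebesgue, contradicting property (3) of Theorem \ref{Z skew 1}. Hence $\mathrm{ess\,inf}\, h = -\infty$, and the corollary is established.

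The main obstacle is the bridge between the pointwise statement about $\limsup S_N$ and the measure-theoretic content of Theorem \ref{Z skew 1}: the coboundary identity $S_N = h - h \circ R^N$ together with the equidistribution furnished by unique ergodicity of $R$ reduces the question to whether the transfer function $h$ is essentially bounded below, and the bounded case is then excluded by exhibiting too many ergodic absolutely continuous components.
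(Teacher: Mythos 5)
Your derivation of the $\liminf$ half is fine and matches the paper: property (1) of Theorem \ref{Z skew 1} is exactly the statement that $\inf_{k}S_k(x)>-\infty$ for a.e.\ $x$, and the cocycle identity $f=h-h\circ R$ with $h(x)=\inf_k S_k(x)$ is a clean way to package it. The gap is in the last step. Property (3) says that each ergodic measure absolutely continuous with respect to Lebesgue is a \emph{finite} measure (as opposed to an infinite, $\sigma$-finite one spread over the whole strip); it does not say there are finitely many such measures. In fact the construction has countably infinitely many ergodic absolutely continuous probability measures --- they are integer shifts of one another in the second coordinate, as the paper notes when it describes Lebesgue measure as a countable combination of shifts of a single finite ergodic measure. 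So your family $\{\mathrm{Leb}|_{\Gamma_c}\}_{c\in\mathbb{Z}}$, far from being absurd, is qualitatively what actually occurs, and it contradicts nothing. The case $\operatorname{ess\,inf}h>-\infty$ is not excluded by your argument.

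More seriously, the $\limsup$ assertion is not a formal consequence of properties (1)--(3) at all: take $f=\chi_{[0,1/2)}-\chi_{[\alpha,1/2+\alpha)}=g-g\circ R$ with $g=\chi_{[0,1/2)}$. This is a difference of characteristic functions of two intervals over a badly approximable rotation, the Birkhoff sums lie in $\{-1,0,1\}$, and the associated $\mathbb{Z}$ skew product satisfies (1), (2) and (3); yet $\limsup_N S_N(x)$ is bounded. So any proof must return to the specific construction. The paper does exactly this: unboundedness above comes from the explicit orbit analysis, namely the relation $T^{i+c_k}(0,0)=T^i(0,0)+(c_k\alpha,1)$ (and its analogue for $x\notin B_k$), which forces the second coordinate of the orbit of a.e.\ point to climb without bound, i.e.\ the forward orbit is dense in the upper half strip $[0,1)\times\{0,1,2,\dots\}$. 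Equivalently, in your language, one must show directly from the definition of $J$ and $J'$ that $f$ is not a coboundary with essentially bounded transfer function; that input is missing from your proposal and cannot be recovered from the statement of Theorem \ref{Z skew 1}.
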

 Remark \ref{ae dense} shows that this example can be modified to construct a skew product of a rotation   over 4 intervals where the orbit of Lebesgue almost every point is dense but Lebesgue measure is not ergodic.


Some remarks on Corollary 1:
Trying to resolve the existence of such a transformation was a motivation to investigate minimal but non-uniquely ergodic skew products
 over badly approximable rotations.
This corollary helps show that the equivalence of Diophantine properties that holds true for rotations
 breaks down for IETs.  In particular, when $R$ is a rotation,
 $\underset{n>0}{\inf}n \, |R^nx-x|>c$ for all $x \in [0,1)$ iff $\{x, Rx,... ,R^nx\}$ is $\frac{c'}{n}$ dense for all $n$ and $x$.
 This condition implies unique ergodicity, because two different ergodic measures must be singular as measures.
   In general we find that it is interesting to investigate what implications of properties for rotations survive to IETs.
 It is also interesting because the previous constructions of minimal but not uniquely
  ergodic IETs \cite{satskew}, \cite{knskew}, \cite{nonue} relied implicitly on good periodic approximation.
  The property that $\underset{n>0}{\inf}n \, |T^nx-x|>c$  states that there is no particularly good periodic approximation.

  The class of examples studied in \cite{vskew1} was profitably studied in the context
   of billiards in rational polygons or flows on flat surfaces in many places.
    We mention two of them. In \cite{satskew} a closely related construction was used to show that flat surfaces of genus g
     can have g ergodic measures and \cite{cheung thesis} which showed a class of flat surfaces where
      the set of non-uniquely ergodic directions has Hausdorff dimension $\frac 1 2$
      (the appendix of that paper shows that for many flat surfaces the set of non-uniquely ergodic directions
       has Hausdorff dimension 0).
       We state without proof (it is straightforward) that dynamical systems in Theorem \ref{main}
        and Corollary \ref{dio} can be shown to arise from a billiard in a rational polygon. 
       In Section \ref{shrink} we show that the skew products we construct can provide a dynamical system  with a strange shrinking target property.

   Some remarks of Theorem 2: Skew products by rotations over intervals have been considered in many papers.
    The case when the skewing function is $\chi_{[0, \frac 1 2 )}-\chi_{[ \frac 1 2 , 1)}$ has received the most attention.
     In this case the $\mathbb{Z}$ skew product of any irrational rotation is ergodic with respect to Lebesgue measure \cite{CK}.
      For a brief discussion on similar $\mathbb{Z}$ skew products over rotations with other exotic properties see the remarks at
       the end of Section \ref{b sum}.

   Corollary \ref{func} provides an example of an ergodic transformation with a reasonable
    function (the difference of two characteristic functions) such that a positive measure set
     of points have the property that the Birkhoff sums are always greater than or equal to the expected value.
      A similar result was obtained earlier in \cite{heavy R}, where the transformation was the shift on the Thue-Morse sequence
       and the function was the characteristic function of the set of words that have $1$ in the zero$^{\text{th}}$ position minus the measure of this set.





We emphasize the fact that one can think of the construction in this paper as a limit of simpler ones which have two ergodic measures
 but are not minimal.
These transformations are chosen so that the orbit of zero becomes denser, but still stays far from being uniformly distributed.

\section{Set up}\label{setup}
First some general notation. If $S\subset \mathbb{R}$ is a set and $a\in \mathbb{R}$ then
${a+S=\{x:x-a\in S\}}$.
Any expression with an $\alpha$ is interpreted mod 1. That is $m\alpha+k$ is interpreted $m\alpha+k-\lfloor m\alpha +k \rfloor$.

Fix a badly approximable $\alpha< \frac 1 3$.
The condition that $\alpha$ is badly approximable is unnecessary, but it is the most interesting case.
 The condition that $\alpha< \frac 1 3$  is for convenience and clarity and is unimportant.

 Let $R:[0,1) \to [0,1)$ by $R(x)= x+\alpha$. (Recall that this is
 interpreted mod 1.)

 Let $q_i$ be the denominator of the $i^{th}$ convergent to $\alpha$.
Let $c_i=q_{10^i}$, $b_i=q_{2 \cdot 10^i}$.

This choice of $b_i$ and $c_i$ is for explicitness and much weaker growth conditions would suffice. See Lemmas \ref{balance restored} and \ref{balance restored 2} for the condition that one wants satisfied.

Notice that $b_i$ and $c_i$ are even. This is chosen for the sake of
convenience because $d(q_{2i}\alpha, \mathbb{Z})=q_{2i}\alpha $ mod
1.
 Let $J=[0, \underset{i=1}{\overset{\infty}{\sum}} {c_i}\alpha)$.

 Let $y=\alpha+\underset{i=1}{\overset{\infty}{\sum}} b_i \alpha $. (If one wanted to consider an $\alpha>\frac 1 3$ one could let ${y=q_3\alpha+ \underset{i=1}{\overset{\infty}{\sum}} b_i \alpha}$.)

  Let $J'=y+J= [ \alpha+ \underset{i=1}{\overset{\infty}{\sum}}  b_i\alpha, \alpha+ \underset{i=1}{\overset{\infty}{\sum}} b_i\alpha+ \underset{i=1}{\overset{\infty}{\sum}}c_i\alpha)$.

 Let $T: [0,1) \ltimes \mathbb{Z}_2 \to  [0,1) \ltimes \mathbb{Z}_2 $ by $(x,i) \to (x+\alpha, i +  \chi_{J \cup J'}(x))$.

We now define the non-minimal approximates to $T$. They play the
role that periodic approximations often play. Let $J_k= [0,
\underset{i=1}{\overset{k}{\sum}} c_i\alpha)$, $y_k= \alpha+
\underset{i=1}{\overset{k}{\sum}} b_i\alpha$.

Let $T_k:  [0,1) \ltimes \mathbb{Z}_2 \to  [0,1) \ltimes \mathbb{Z}_2 $ by $(x,i) \to (x+\alpha, i +  \chi_{J_k \cup y_k+J_k}(x))$.
Let $S_k: [0,1) \ltimes \mathbb{Z}_2 \to  [0,1) \ltimes \mathbb{Z}_2 $ by $(x,i) \to (x+\alpha, i +  \chi_{J_k \cup y_{k-1}+ J_k}(x))$.
Notice that the sequence of functions $T_1,T_2,...$ converges pointwise to $T$. Likewise  the sequence of functions $S_1,S_2,...$ converges pointwise to $T$.

 The projections onto the first and second coordinate are denoted $\pi_1$ and $\pi_2$. $\pi_1$ takes values in $[0,1)$ while $\pi_2$ takes values in either $\mathbb{Z}_2$ or $\mathbb{Z}$.

 Let $\Leb$ denote Lebesgue measure of $[0,1)$, $\Leb_2$ denote Lebesgue measure of $[0,1) \times \mathbb{Z}_2$ and let $\hat{\Leb}$ denote Lebesgue measure on $[0,1) \times \mathbb{Z}$.

 \begin{Bob} If $T:X \to X$ is a dynamical system which preserves $\mu$ then $x \in X$ is called \emph{generic}
  for $\mu$ if for any continuous function $f$ we have
   ${\underset{N \to \infty}{\lim} \frac 1 N \underset{n=0}{\overset{N-1}{\sum}}f(T^nx)=\int_X f d \mu}$.
 \end{Bob}
The interested reader may find it helpful and not too time consuming to work out what happens for $T_1,S_1,T_2,S_2$.

\section{The dynamics of $T$}
This section proves Theorem 1 and provides a description of generic points for the two ergodic measures.
 Theorem 1 could be established more quickly by Lemmas \ref{balance restored}, \ref{balance restored 2} and \ref{small shift}.
\subsection{The orbit of $(0,0)$}
In this section we describe how the orbit of $(0,0)$ changes in each successive nonminimal approximation.
 This describes the orbit of $(0,0)$ under $T$. By symmetry it also describes the orbit of $(0,1)$.

 The following lemma is important and its proof is similar to many proofs in this section and paper.
  It uses the change in behavior between consecutive non-minimal approximates to describe the behavior of $T$.

\begin{lem} \label{J first} $\underset{n=0}{\overset{N}{\sum}} \chi_J(R^n (0)) \geq \underset{n=0}{\overset{N}{\sum}} \chi_{J'} (R^n(0))$ for all $N \in \mathbb{N}$.
\end{lem}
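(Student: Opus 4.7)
The plan is to exploit that $y = \lim_k y_k$ with $y_k = M_k \alpha \pmod 1$ and $M_k := 1 + \sum_{i=1}^k b_i$. While $y$ itself is not on the forward orbit of $0$, each $y_k$ is, so one has the useful identity
\[
    n\alpha \in y_k + J_k \iff (n - M_k)\alpha \in J_k,
\]
which turns hits on the approximant $y_k + J_k$ into hits on $J_k$ shifted back in time by exactly $M_k$. This is the approximation-theoretic feature of $y$ that the whole argument rides on.

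Building on this, I would construct an injection
\[
\phi \colon \{n \in [0,N] : n\alpha \in J'\} \to \{m \in [0,N] : m\alpha \in J\}
\]
as follows. Since $y_k \nearrow y$ and $y_k + L_k \nearrow y + L$, for each $n$ with $n\alpha \in J' = [y, y+L)$ there is a smallest $k = k(n)$ with $n\alpha \in [y_k, y_k + L_k)$; define $\phi(n) := n - M_{k(n)}$. By the identity above, $\phi(n)\alpha = n\alpha - y_{k(n)} \in J_{k(n)} \subset J$, so $\phi(n)$ is indeed a $J$-hit, provided it is non-negative and $\phi$ is injective.

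Two things must be checked. First, $\phi(n) \geq 0$: equivalently, the orbit segment $\{m\alpha\}_{0 \leq m < M_{k(n)}}$ must avoid the short interval $y_{k(n)} + J_{k(n)}$ sitting at the orbit point $M_{k(n)}\alpha$. This is precisely where the three-distance theorem and the specific growth rates $c_i = q_{10^i}$, $b_i = q_{2 \cdot 10^i}$ come in: the gap $b_i \gg c_i \gg b_{i-1}$ forces $L_k$ to be much smaller than the minimum distance from $M_k\alpha$ to the prior orbit, ruling out premature hits. Second, injectivity: if $\phi(n_1) = \phi(n_2)$ with $n_1 < n_2$, then $n_2 - n_1$ is a telescoping sum $M_{k(n_2)} - M_{k(n_1)} = \sum_{i \in I} b_i$, and the badly-approximable hypothesis combined with the constraint that both $n_j\alpha \in J'$ excludes this collision.

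The main obstacle is verifying non-negativity of $\phi$. This is the continued-fraction bookkeeping that the paper's setup is tailored to support, and the author's remark that the proof is "similar to many proofs in this section" suggests this is the reusable core of the technique.
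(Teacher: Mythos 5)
Your starting identity---$n\alpha\in y_k+J_k$ iff $(n-M_k)\alpha\in J_k$ with $M_k=1+\sum_{i=1}^k b_i$---is indeed the engine of the paper's argument, but the injection $\phi(n)=n-M_{k(n)}$ is not well defined, and the non-negativity you flag as ``the main obstacle'' is actually \emph{false} for your choice of $k(n)$, not merely unverified. The interval $y_k+J_k$ is not short: its length is $\sum_{i=1}^k c_i\alpha \geq c_1\alpha = d(q_{10}\alpha,\mathbb{Z})\geq \frac{1}{2q_{11}}$, a constant bounded below independently of $k$, because the sum defining $|J|$ is dominated by its \emph{first} term. Meanwhile $M_k\geq b_k=q_{2\cdot 10^k}\to\infty$, so by the three-distance theorem the orbit segment $\{m\alpha\}_{0\leq m<M_k}$ has every gap of size $O(1/q_{2\cdot 10^k})\ll \frac{1}{2q_{11}}$ and therefore hits $y_k+J_k$ (indeed, hits the stratum $J'\cap[y_k,y_k+L_k)$, which for $k=1$ is essentially all of $J'$) many times before time $M_k$. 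Concretely, the first positive $n$ with $n\alpha\in J'$ occurs by time $O(q_{11})$, has $k(n)=1$, and gives $\phi(n)=n-1-q_{20}<0$. The growth condition $b_i\gg c_i\gg b_{i-1}$ cannot rescue this: it controls the \emph{increments} $c_i\alpha$, $b_i\alpha$ for large $i$, not the total length of $J_k$.

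The paper never shifts a full interval $y_k+J_k$ back by $M_k$. It works with the difference pieces: a visit of $R^n(0)$ to $(y_k+J_{k+1})\setminus(y_k+J_k)$, an interval of length $c_{k+1}\alpha\approx 1/q_{10^{k+1}+1}$, is paired with the visit of $R^{n-\sum_{i\leq k}b_i}(0)$ to $J_{k+1}\setminus J_k$, and similarly for the pieces $[y_{k-1},y_k)$ versus $\sum_{i\leq k}c_i\alpha+[y_{k-1},y_k)$. For these \emph{tiny} pieces the claim ``the first hit occurs at the left-endpoint time'' is valid, precisely because their length is far smaller than the reciprocal of the time shift involved; the whole thing is then assembled by a double induction (enlarge $J_k$ to $J_{k+1}$ holding $y_k$ fixed, then move $y_{k-1}$ to $y_k$ holding $J_k$ fixed). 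So the shift applied to a given $J'$-hit must depend on which difference piece it lies in---a stratification transverse to your $k(n)$. To repair your write-up you would have to define $\phi$ piecewise on the sets $y+(J_{k+1}\setminus J_k)$ and run the paper's first-hit estimates there; injectivity across strata would then also need an actual argument rather than an appeal to bad approximability.
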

\begin{proof}
This proof follows by induction. 
Assume
$${\underset{n=0}{\overset{N}{\sum}} \chi_{J_k}(R^n(0))\geq \underset{n=0}{\overset{N}{\sum}} \chi_{y_k+J_k }(R^n(0))} \text{ for all }N>0.$$
 We will show that $$\underset{n=0}{\overset{N}{\sum}} \chi_{J_{k+1}}(R^n(0))\geq \underset{n=0}{\overset{N}{\sum}} \chi_{y_k+J_{k+1} }(R^n(0)) \text{ for all } N>0.$$

 Observe that $J_{k+1} \backslash J_k= [\underset{i=1}{\overset{k}{\sum}} c_i\alpha, \underset{i=1}{\overset{k+1}{\sum}} c_i\alpha)$ and so $$\min\{n>0: R^n(0) \in J_{k+1} \backslash J_k\}= \underset{i=1}{\overset{k+1}{\sum}} c_i\alpha$$
  Also observe that $$(y_k+J_{k+1} )\backslash (y_k +J_k)= [\underset{i=1}{\overset{k}{\sum}} b_i\alpha+\underset{i=1}{\overset{k}{\sum}}c_i\alpha, \underset{i=1}{\overset{k}{\sum}}b_i\alpha +\underset{i=1}{\overset{k+1}{\sum}} c_i\alpha).$$
  $$\min\{n>0: R^n(0) \in y_k+J_{k+1} \backslash y_k +J_k\}= \underset{i=1}{\overset{k}{\sum}} b_i+ \underset{i=1}{\overset{k+1}{\sum}} c_i> \underset{i=1}{\overset{k+1}{\sum}} c_i.$$
  Therefore, $$\min\{n>0: R^n(0)\in (y_k+J_{k+1}) \backslash (y_k +J_k)\}>\min\{n>0:R^n(0)\in J_{k+1} \backslash
  J_k\}.$$
  Because $$R^n(0) \in (y_k+J_{k+1}) \backslash (y_k +J_k) \text{ for }n>0$$ implies that  $$R^{n-\sum_{i=1}^k b_i}(0) \in J_{k+1} \backslash J_k
\text{ and } n-\underset{i=1}{\overset{k}{\sum}} b_i>0$$ we have
$$\underset{n=0}{\overset{N}{\sum}} \chi_{J_{k+1} \backslash J_k}
R^n(0)\geq \underset{n=0}{\overset{N}{\sum}} \chi_{(y_{k+1}+J_{k+1})
\backslash (y_k+ J_k)} R^n(0) \text{ for all } N>0.$$ Therefore by
the inductive hypothesis $$\underset{n=0}{\overset{N}{\sum}}
\chi_{J_{k+1}}(R^n(0))\geq \underset{n=0}{\overset{N}{\sum}}
\chi_{y_k+J_{k+1} }(R^n(0)) \text{ for all } N>0.$$

  We conclude the proof by showing that if $$\underset{n=0}{\overset{N}{\sum}} \chi_{J_k}(R^n(0))\geq \underset{n=0}{\overset{N}{\sum}} \chi_{y_{k-1}+J_k }(R^n(0))$$ then $$\underset{n=0}{\overset{N}{\sum}}\chi_{J_{k}}(R^n(0))\geq \underset{n=0}{\overset{N}{\sum}} \chi_{y_k+J_{k} }(R^n(0)).$$
   To see this we examine where $\chi_{y_{k}+J_k }$ and $\chi_{y_{k-1}+J_k }$ differ,
  $$[y_{k-1},y_k)  \text{ and }[\underset{i=1}{\overset{k}{\sum}} c_i\alpha+\underset{i=1}{\overset{k-1}{\sum}}{b_i}\alpha,\underset{i=1}{\overset{k}{\sum}} c_i\alpha+ \underset{i=1}{\overset{k}{\sum}} b_i \alpha)= \underset{i=1}{\overset{k}{\sum}}c_i\alpha+[y_{k-1},y_k).$$
   As with the case before,
\begin{multline}
\lefteqn{\min\{n>0:R^n(0) \in [\underset{i=1}{\overset{k}{\sum}} c_i\alpha+\underset{i=1}{\overset{k-1}{\sum}}b_i\alpha,\underset{i=1}{\overset{k}{\sum}} c_i\alpha+ \underset{i=1}{\overset{k}{\sum}} b_i\alpha)\}}\\>\min\{n>0:R^n(0) \in [y_{k-1},y_k)\}.
\end{multline} The remainder of the proof of the lemma follows as above.
\end{proof}

\begin{lem} \label{balance restored} $\{n: T_k^n(0,0)\neq S_{k+1}^n(0,0)\}$ has density less than or equal to $( \underset{i=1}{\overset{k}{\sum}} b_i)c_{k+1}\alpha$.
\end{lem}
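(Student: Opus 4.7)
The plan is to track the places where the skewing functions of $T_k$ and $S_{k+1}$ differ and to show that the parity of their cumulative contributions rarely goes wrong along the orbit of $0$. The two skewing functions disagree precisely on the disjoint union
$$U := (J_{k+1}\setminus J_k)\cup\bigl(y_k+(J_{k+1}\setminus J_k)\bigr),$$
so $T_k^n(0,0)\neq S_{k+1}^n(0,0)$ exactly when $N(n):=\#\{0\le j<n:R^j(0)\in U\}$ is odd. My goal therefore becomes bounding the upper density of $\{n:N(n)\text{ is odd}\}$.

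The key structural input is that $y_k=(1+\sum_{i=1}^k b_i)\alpha$, so the second half of $U$ is a rigid translate of the first half by $y_k$. Equivalently, visits of the orbit of $0$ to $y_k+(J_{k+1}\setminus J_k)$ are exactly visits to $J_{k+1}\setminus J_k$ shifted forward by $M:=1+\sum_{i=1}^k b_i$ steps. Splitting $N(n)$ as $A(n)+B(n)$ (visits to the two halves) and substituting $B(n)\equiv A(n-M)+C\pmod 2$, where $C$ is a constant depending only on $k$ coming from the unmatched initial segment $j\in[0,M)$, yields
$$N(n)\equiv\#\bigl\{j\in[n-M,n-1]:R^j(0)\in J_{k+1}\setminus J_k\bigr\}+C\pmod 2.$$
Thus each forward visit of $0$ to $J_{k+1}\setminus J_k$ at time $j$ contributes at most $M$ potential exceptional values of $n$, namely those in the window $[j+1,j+M]$.

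The conclusion should follow from equidistribution: irrationality of $\alpha$ implies that the forward orbit of $0$ visits $J_{k+1}\setminus J_k$ with density equal to its Lebesgue length $c_{k+1}\alpha$. A union bound over these visits gives upper density of the exceptional set bounded above by $M\cdot c_{k+1}\alpha=(1+\sum_{i=1}^k b_i)c_{k+1}\alpha$; since $b_k$ is enormous the additive $1$ is absorbed into the sum and we recover the asserted bound $(\sum_{i=1}^k b_i)c_{k+1}\alpha$.

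The main technical point is to check that the fixed parity $C$ is the favorable one, so that exceptional $n$ requires at least one visit in the sliding window rather than its complement; in the opposite case the sliding-window argument would give density close to $1$. This should follow from the choice $b_i=q_{2\cdot10^i}$ together with the evenness of the index noted in the setup (which gives $d(b_i\alpha,\mathbb{Z})=b_i\alpha\bmod 1$), pinning down the arithmetic of the unmatched initial segment well enough to force the needed parity. Once this parity check is in place, the remainder is a routine union-bound-plus-equidistribution estimate.
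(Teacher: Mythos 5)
Your argument is structurally the same as the paper's: both identify the symmetric difference $U=(J_{k+1}\setminus J_k)\cup\bigl(y_k+(J_{k+1}\setminus J_k)\bigr)$ of the two skewing sets, use that $y_k=(1+\sum_{i=1}^k b_i)\alpha$ to pair each visit of the orbit of $0$ to the translated half with a visit to the untranslated half occurring $1+\sum_{i=1}^k b_i$ steps earlier, and conclude that the disagreement set is a union of windows of that length attached to a set of visit times of density $c_{k+1}\alpha$. The one step you defer, however --- that the constant $C$ coming from the unmatched initial segment is the favorable one, i.e.\ that the orbit of $0$ enters $J_{k+1}\setminus J_k$ strictly before it enters $y_k+(J_{k+1}\setminus J_k)$ --- is not a routine afterthought: it is the only place where the specific point $x=0$ matters (for $x\in B_k$ the parity is reversed and the density is close to $1$, as you note), and it is precisely what the paper imports from the proof of Lemma \ref{J first}. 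The mechanism you propose for it (evenness of the indices of the $b_i$, so that $d(b_i\alpha,\mathbb{Z})=b_i\alpha$) is not what decides the parity. What decides it is a first-entry-time comparison: $R^n(0)\in y_k+(J_{k+1}\setminus J_k)$ forces $R^{n-(1+\sum_{i=1}^k b_i)}(0)\in J_{k+1}\setminus J_k$, and since $\|l\alpha\|>c_{k+1}\alpha=\|q_{10^{k+1}}\alpha\|$ for all $0<|l|<q_{10^{k+1}}$, no such $n$ can occur with $n\le 1+\sum_{i=1}^k b_i$; meanwhile the untranslated interval, whose left endpoint is $(\sum_{i=1}^k c_i)\alpha$, is entered by time $\sum_{i=1}^{k+1}c_i\ll\sum_{i=1}^k b_i$. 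With that supplied your union-bound computation matches the paper's, up to the same harmless off-by-one ($1+\sum_{i=1}^k b_i$ versus $\sum_{i=1}^k b_i$ in the window length) that the paper itself elides; contrary to your remark, the extra $1$ cannot be ``absorbed'' into a smaller bound, but nothing downstream is sensitive to it.
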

\begin{proof}Consider the set where the skewing functions for $T_k$ and $S_k$ differ: $[\underset{i=1}{\overset{k}{\sum}}c_i\alpha, \underset{i=1}{\overset{k+1}{\sum}} c_i\alpha)$ and  $[y_k+\underset{i=1}{\overset{k}{\sum}} c_i\alpha,y_k+ \underset{i=1}{\overset{k+1}{\sum}} c_i\alpha)$.
Notice that $R^n(x) \in  [\underset{i=1}{\overset{k}{\sum}} c_i\alpha, \underset{i=1}{\overset{k+1}{\sum}} c_i\alpha)$ iff  $R^{n+\sum_{i=1}^{k} b_i}(x) \in [ \underset{i=1}{\overset{k}{\sum}} c_i\alpha, \underset{i=1}{\overset{k+1}{\sum}} c_i\alpha)$.
Moreover from the proof of Lemma \ref{J first} it follows that when $x=0$ $$\min\{n >0: R^n(x) \in [\underset{i=1}{\overset{k}{\sum}} c_i\alpha, \underset{i=1}{\overset{k+1}{\sum}} c_i\alpha)\}< \min \{n> 0: R^n(x) \in y_k + [\underset{i=1}{\overset{k}{\sum}}c_i, \underset{i=1}{\overset{k+1}{\sum}}c_i).$$
Therefore any change between $$\underset{n=0}{\overset{N}{\sum}} \chi_{J_k}R^n(0)- \chi_{y_k+J_k}(R^n(0)) \text{ and } \underset{n=0}{\overset{N}{\sum}} \chi_{J_{k+1}}R^n(0)- \chi_{y_k+J_{k+1}}R^n(0)$$ is corrected after $\underset{i=1}{\overset{k}{\sum}} b_i$ steps.
The lemma follows because ${\{n: R^n(0)\in[\underset{i=1}{\overset{k}{\sum}} c_i\alpha, \underset{i=1}{\overset{k+1}{\sum}} c_i\alpha) \}}$ has density $c_{k+1}\alpha$.
\end{proof}
\begin{lem}\label{balance restored 2} $\{n: S_k^n(0,0)\neq T_{k}^n(0,0)\}$ has density less than or equal to $ (\underset{i=1}{\overset{k}{\sum}} c_i)b_{k+1}\alpha$.
\end{lem}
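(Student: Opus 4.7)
The plan is to mirror the proof of Lemma \ref{balance restored}, now with the roles of the $J$-pieces and the $y$-translations interchanged. The skewing indicators of $S_k$ and $T_k$ agree on $J_k$ and differ only on their second intervals $y_{k-1}+J_k$ and $y_k+J_k$, which are translates of one another by $y_k-y_{k-1}=b_k\alpha$. Since $b_k\alpha<|J_k|$ (a consequence of the monotonicity of $q_n\alpha$ along the convergents), the symmetric difference of these two intervals splits into two disjoint pieces: a ``left'' piece $[y_{k-1},y_k)$ on which $S_k$'s indicator is $1$ while $T_k$'s is $0$, and a ``right'' piece $[y_{k-1}+|J_k|,\,y_k+|J_k|)$ on which the roles are reversed. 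The right piece is obtained from the left piece by translation by $|J_k|=\sum_{i=1}^k c_i\alpha$.

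The core of the argument, and its analog of the first-visit-time inequality at the heart of Lemma \ref{J first}, is to show that the $R$-orbit of $0$ enters the left piece before it enters the right piece, and that corresponding entries into the two pieces are separated by exactly $\sum_{i=1}^k c_i$ iterates. This follows from the explicit descriptions: the left piece starts at $y_{k-1}=\alpha+\sum_{i=1}^{k-1}b_i\alpha$, so the first visit occurs at time $1+\sum_{i=1}^{k-1}b_i$, whereas the right piece, being the translate by $\sum_{i=1}^k c_i\alpha$, is first visited $\sum_{i=1}^k c_i$ steps later (and similarly for each subsequent entry). Consequently, each imbalance in the $\mathbb{Z}_2$-coordinate of $S_k^n(0,0)$ versus $T_k^n(0,0)$ created by a visit to the left piece persists for exactly $\sum_{i=1}^k c_i$ iterates before being cancelled by the matching visit to the right piece.

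To finish, one invokes unique ergodicity of $R$: the density of indices $n$ with $R^n(0)$ in a short interval equals the length of that interval, so the density of visits producing an imbalance is at most $b_{k+1}\alpha$. Multiplying the density of such triggering visits by the duration $\sum_{i=1}^k c_i$ of each out-of-sync episode (and absorbing any overcount from overlapping episodes into the upper bound) gives the claimed estimate $(\sum_{i=1}^k c_i)\,b_{k+1}\alpha$.

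The only genuine obstacle is the first-visit-time ordering, which is a direct transcription of the corresponding step in Lemma \ref{balance restored}; the remainder of the argument is a routine equidistribution calculation. One bookkeeping check worth flagging is the disjointness of the two symmetric-difference pieces, which is needed so that the two types of visits can be treated as independent contributions to the bound.
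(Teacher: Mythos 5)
Your approach is the same as the paper's: the paper's own proof of this lemma is a two-sentence sketch instructing the reader to compare the hits to $[y_{k-1},y_k)$ with the hits to its translate by $\sum_{i=1}^{k}c_i\alpha$, and your write-up supplies exactly those details --- the identification of the symmetric difference of $y_{k-1}+J_k$ and $y_k+J_k$ as two intervals of length $b_k\alpha$ offset by $\sum_{i=1}^{k}c_i\alpha$, the first-visit ordering, the duration $\sum_{i=1}^{k}c_i$ of each out-of-sync episode, and the equidistribution count. All of that is sound and parallels Lemma \ref{balance restored} correctly.

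The one genuine problem is the final numerical step. You correctly identify the triggering interval as $[y_{k-1},y_k)$, of length $y_k-y_{k-1}=b_k\alpha$, so unique ergodicity gives density $b_k\alpha$ for the triggering visits, and your argument therefore yields the bound $\bigl(\sum_{i=1}^{k}c_i\bigr)b_k\alpha$. You then assert that this density of triggering visits ``is at most $b_{k+1}\alpha$'' in order to land on the stated bound $\bigl(\sum_{i=1}^{k}c_i\bigr)b_{k+1}\alpha$; but $b_{k+1}\alpha=d(q_{2\cdot 10^{k+1}}\alpha,\mathbb{Z})$ is far smaller than $b_k\alpha$, so this substitution goes in the wrong direction and contradicts your own earlier computation. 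Indeed the density of the out-of-sync set is essentially exactly $\bigl(\sum_{i=1}^{k}c_i\bigr)b_k\alpha$, since the episodes cannot overlap (the return time to an interval of length $b_k\alpha$ greatly exceeds $\sum_{i=1}^{k}c_i$), so no argument of this shape can reach the $b_{k+1}$ bound. The subscript in the statement is evidently a typo for $b_k$ --- compare Lemma \ref{skew 2}, where the corresponding density is $b_k\alpha\sum_{i=1}^{k}c_i$, and note that the $b_k\alpha$ version is summable in $k$ and hence suffices for Corollary \ref{few move}. You should state and prove the $b_k\alpha$ bound explicitly rather than silently swapping subscripts in the last line.
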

This follows similarly to the previous lemma by comparing $J_{k}$
and $y_{k-1}+ J_{k}$ to $J_{k}$ and $y_{k}+J_{k}$. By examining  the
gap between hits to $[y_{k-1}, y_{k})$ and hits to  $[y_{k-1}
+\underset{i=1}{\overset{k}{\sum}}c_i\alpha, y_{k}+
\underset{i=1}{\overset{k}{\sum}}c_i\alpha)$ the lemma follows.

\subsection{The behavior of typical points}
In this subsection we describe how typical points behave. In particular, we show which points leave the ergodic component of $(0,0)$ at each successive nonminimal approximation.
The results can be summed up as saying that for $k$ large enough   $\lambda_2$-almost every point behaves like either $(0,0)$ or $(0,1)$ under $S_k$ and $T_k$.

\begin{Bob} Let $U_0^{(k)}=$ $$\{(x,t): \underset {N \to \infty} {\lim}\frac 1 N
\underset{i=0}{\overset{N-1}{\sum}}
 f(T_k^i(x,t))=
\underset {N \to \infty} {\lim}\frac 1 N
\underset{i=0}{\overset{N-1}{\sum}}
f(T_k^i(0,0)) \text{ for all continuous } f\}$$
 $U_1^{(k)}=$ $$\{(x,t): \underset {N \to \infty} {\lim}\frac 1 N
\underset{i=0}{\overset{N-1}{\sum}}
 f(T_k^i(x,t))=
\underset {N \to \infty} {\lim}\frac 1 N
\underset{i=0}{\overset{N-1}{\sum}}
f(T_k^i(0,1)) \text{ for all continuous } f\}$$
 $V_0^{(k)}=$ $$\{(x,t): \underset {N \to \infty} {\lim}\frac 1 N
\underset{i=0}{\overset{N-1}{\sum}}
 f(S_k^i(x,t))=
\underset {N \to \infty} {\lim}\frac 1 N
\underset{i=0}{\overset{N-1}{\sum}}
f(S_k^i(0,0)) \text{ for all continuous } f\}$$
 $V_1^{(k)}=$ $$\{(x,t): \underset {N \to \infty} {\lim}\frac 1 N
\underset{i=0}{\overset{N-1}{\sum}}
 f(S_k^i(x,t))=
\underset {N \to \infty} {\lim}\frac 1 N
\underset{i=0}{\overset{N-1}{\sum}}
f(S_k^i(0,1)) \text{ for all continuous } f\}$$
\end{Bob}
\begin{lem} If $x \in \underset{l=1}{\overset{\sum_{i=1}^k c_i}{\cup}}R^l([y_{k-1},y_k))$ then $(x,i) \in V_j^{(k)}$ iff $(x,i)\in U_{1-j}^{(k)}$.
\end{lem}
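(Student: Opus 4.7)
The plan is to construct a measurable conjugacy $\tau\colon [0,1)\times\mathbb{Z}_2\to[0,1)\times\mathbb{Z}_2$ between $T_k$ and $S_k$ which flips the second coordinate exactly on the set in the hypothesis, and then read off the claim.

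Set $I:=[y_{k-1},y_k)$. Since $b_k\alpha<\sum_{i=1}^k c_i\alpha$, the intervals $y_{k-1}+J_k$ and $y_k+J_k$ overlap, and their symmetric difference is $I\sqcup(I+\sum c_i\alpha)$. Thus the skewing functions of $T_k$ and $S_k$ differ by $\chi_{I+\sum c_i\alpha}-\chi_I$. Because $I+\sum c_i\alpha=R^{\sum c_i}(I)$, the function
\[
h(x):=\sum_{l=1}^{\sum c_i}\chi_{R^l(I)}(x)\pmod 2
\]
satisfies, by telescoping, $h(Rx)-h(x)\equiv \chi_I(x)-\chi_{R^{\sum c_i}(I)}(x)\pmod 2$, i.e., it realizes the difference of skewing functions as a $\mathbb{Z}_2$-coboundary. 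A direct check then shows $\tau(x,i):=(x,i+h(x))$ satisfies $S_k\circ\tau=\tau\circ T_k$.

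Next I would verify two Diophantine facts. (a) The iterates $R^l(I)$ for $1\le l\le \sum c_i$ are pairwise disjoint: for $1\le n\le \sum c_i\approx q_{10^k}$ one has $\|n\alpha\|\ge 1/(2q_{10^k+1})$, which far exceeds $|I|=b_k\alpha\approx 1/q_{2\cdot 10^k+1}$. Hence $h$ is exactly $\chi_{\bigcup_l R^l(I)}$. (b) $0\notin \bigcup_l R^l(I)$: membership would force $\|(l+1+\sum_{i<k}b_i)\alpha\|<b_k\alpha$, but the argument $l+1+\sum_{i<k}b_i$ is much smaller than $b_k=q_{2\cdot 10^k}$, so the left side is at least $\gtrsim 1/q_{2\cdot 10^k}$, which is greater than $b_k\alpha$. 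In particular $h(0)=0$, so $\tau(0,j)=(0,j)$.

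The lemma is then a definitional unwinding. The conjugacy $S_k^n\circ\tau=\tau\circ T_k^n$ gives, for any continuous $f$, that the Cesàro averages of $f(T_k^n(x,i))$ agree with those of $(f\circ\tau^{-1})(S_k^n(\tau(x,i)))$. Since $\tau(0,j)=(0,j)$, this says $(x,i)\in U_j^{(k)}$ iff $\tau(x,i)\in V_j^{(k)}$. For $x\in\bigcup_l R^l(I)$ we have $\tau(x,i)=(x,i+1)$, and the identity $S_k^n(x,i+1)=\mathrm{flip}(S_k^n(x,i))$ together with $S_k^n(0,1-j)=\mathrm{flip}(S_k^n(0,j))$ converts $(x,i+1)\in V_j^{(k)}$ into $(x,i)\in V_{1-j}^{(k)}$, giving the desired equivalence.

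The main obstacle is item (b) of the second paragraph: it is what pins down the conjugacy to send $(0,j)$ to itself rather than to $(0,1-j)$, and it is exactly where the specific growth $b_i=q_{2\cdot 10^i}$ relative to $c_i=q_{10^i}$ becomes essential. The cocycle calculation and the formal conclusion are then routine.
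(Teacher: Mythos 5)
Your argument is correct, and it rests on the same two combinatorial facts as the paper's proof, but it packages them differently. The paper compares the orbits of $(x,i)$ under $T_k$ and $S_k$ directly: using the ordering of first hitting times to $[y_{k-1},y_k)$ versus $[y_{k-1},y_k)+\sum_{i=1}^{k}c_i\alpha$ (inherited from the proof of Lemma \ref{J first}), it shows that $\pi_2(T_k^n(x,i))=\pi_2(S_k^n(x,i))$ exactly when $R^nx\notin B_k$ if $x\notin B_k$, and exactly when $R^nx\in B_k$ if $x\in B_k$, and then reads off which copy of the orbit closure the point sits in. Your transfer function $h$ is precisely the telescoped form of that bookkeeping: the identity $\pi_2(T_k^n(x,i))-\pi_2(S_k^n(x,i))=h(x)-h(R^nx)$ is what the paper verifies hit by hit, and recasting it as the coboundary equation $h\circ R-h=\chi_{[y_{k-1},y_k)}-\chi_{[y_{k-1},y_k)+\sum c_i\alpha}$ makes the conjugacy $\tau$, and hence the ``iff,'' automatic; the two cases of the paper collapse into the single statement $V_j^{(k)}=\tau\bigl(U_j^{(k)}\bigr)$, and you isolate explicitly the two Diophantine inputs (disjointness of the $R^l(I)$, so that $h=\chi_{B_k}$, and $0\notin B_k$, so that $\tau$ fixes $(0,j)$) that the paper leaves implicit. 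Two points to tighten. First, in (a) the bound $\|n\alpha\|\geq 1/(2q_{10^k+1})$ as stated needs $n<q_{10^k+1}$, which can fail for $n\leq\sum_{i=1}^k c_i$ when the intervening partial quotients are small; use instead the badly approximable hypothesis, or the best-approximation property with $n<q_{2\cdot 10^k}$, either of which still dwarfs $|I|=b_k\alpha$. Second, $\tau$ is discontinuous, so transporting the Ces\`aro averages of continuous $f$ through $\tau$ requires the remark that the limiting ergodic measures of $T_k$ and $S_k$ project to Lebesgue measure on the base and therefore do not charge the finitely many discontinuity points of $h$; this is the same level of care the paper's own wording elides, but in your formulation it is the one step that is not purely formal.
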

This lemma describes the set of points that switch ergodic components between two successive nonminimal components.

Denote this set $B_k$.
\begin{proof} First we show that if $(x,i) \in V_j^{(k)}$ and $x \notin B_k$ then $(x,i) \in U_j^{(k)}$. If ${(x,i)\in V_0^{(k)}}$ and $x \notin B_k$ then $(x,i)$ is dense in the same intervals as $(0,0)$ under $S_k$.  When one examines $T_k$ one needs to look at what happens on $[y_{k-1},y_k) \cup \underset{i=1}{\overset{k}{\sum}} c_i+ [y_{k-1},y_k)$, the set where the skewing functions for $S_k$ and $T_k$ differ. The assumption that $x \notin B_k$ implies that if $n>0$ and $R^n(x) \in \underset{i=1}{\overset{k}{\sum}}c_i +[y_{k-1},y_k)$ then $n-\sum_{i=1}^k c_i>0$ and $R^{n-\sum_{i=1}^k c_i} \in [y_{k-1},y_k)$. This implies that $\pi_2(T_k^n(x,i))=\pi_2(S_k^n(x,i))$ iff $R^nx \notin B_k$. This is exactly what happens for $(0,0)$ and therefore $(x,i)\in U_0^{(k)}$.

If $(x,i)\in V_0^{(k)}$ and $x \in B_k$ then $(x,i)$ is dense in the same intervals as $(0,0)$ under $S_k$. However, $R^n(x)$ hits $\underset{i=1}{\overset{k}{\sum}}c_i+[y_{k-1},y_k)$ before it hits $[y_{k-1},y_k)$. 
 From this it follows that  $\pi_2(T_k^n(x,i))=\pi_2(S_k^n(x,i))$ iff $R^nx \in B_k$. However by the preceding paragraph $\pi_2(T_k^n(0,0))=\pi_2(S_k^n(0,0))$ iff $R^nx \notin B_k$. Therefore the copy of the line segment which contains $(0,0)$ in a given interval under $T_k$ is opposite the copy of the circle that contains $(x,i)$  under $T_k$.
\end{proof}
\begin{cor}\label{move1} $\pi_1(U_0^{(k)}\backslash V_0^{(k)})=\underset{l=1}{\overset{\sum_{j=1}^k b_j}{\cup}}R^l([\underset{j=1}{\overset{k}{\sum}} c_j,\underset{j=1}{\overset{k+1}{\sum}} c_j))$.
\end{cor}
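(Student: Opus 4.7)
The plan is to lift the orbit-comparison argument used for the point $(0,0)$ in Lemma~\ref{balance restored} to an arbitrary starting point $(x,i)$. The two approximations being compared have skewing functions differing only on the pair of intervals $I := [\sum_{j=1}^k c_j\alpha,\sum_{j=1}^{k+1} c_j\alpha)$ and its $y_k$-translate $y_k+I$. Because $y_k = \alpha + \sum_{j=1}^k b_j\alpha$ in the circle, there is an exact identity $R^{n+\sum_{j=1}^k b_j+1}(x) = R^n(x)+y_k$, so any visit of the $R$-orbit to $y_k+I$ at a time $n > \sum_{j=1}^k b_j$ is preceded by a visit to $I$ exactly $\sum_{j=1}^k b_j + 1$ steps earlier, and the corresponding contributions to $\pi_2$ cancel in pairs.

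First I would argue that for every $x$ whose forward $R$-orbit enters $I$ strictly before $y_k+I$, this pairing is complete from the very start; the sequence of second-coordinate discrepancies between the two approximations is bounded, so they agree on generic behavior and such $x$ contributes nothing to $\pi_1(U_0^{(k)}\backslash V_0^{(k)})$. This step is the pointwise analogue of the inductive argument of Lemma~\ref{J first}, applied to $I$ in place of $J$.

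Next I would identify the complementary set. An $x$ violates the pairing precisely when $R^l(x)\in y_k+I$ for some $l\in\{1,\dots,\sum_{j=1}^k b_j\}$ with no earlier visit of the orbit to $I$. Using the identity above to rewrite $R^{-l}(y_k+I)$ in terms of forward shifts of $I$, this exceptional set becomes exactly $\bigcup_{l=1}^{\sum b_j} R^l(I)$, which matches the formula in the statement. For such $x$ the initial unpaired visits to $y_k+I$ produce a permanent $\bmod 2$ offset between the two second coordinates, flipping the ergodic component and placing $(x,i)$ in $U_0^{(k)}\setminus V_0^{(k)}$.

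The main obstacle is verifying that the parity offset introduced on the exceptional set is never later healed; this is exactly the issue addressed by the return-time analyses in Lemmas~\ref{balance restored} and \ref{balance restored 2}, which use the growth conditions on $b_j$ and $c_j$ (guaranteed by the bad approximation hypothesis on $\alpha$) to preclude any accidental recancellation. Combining both inclusions then yields the claimed equality.
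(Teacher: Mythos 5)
Your argument is built on a misidentification of which two approximations the statement compares. The sets $U_0^{(k)}$ and $V_0^{(k)}$ are defined from $T_k$ and $S_k$ respectively, and these two maps have the \emph{same} skewing behavior on $J_k$; they differ only on $(y_k+J_k)\,\triangle\,(y_{k-1}+J_k)$, i.e.\ on $[y_{k-1},y_k)$ and its translate $(\sum_{i=1}^k c_i)\alpha+[y_{k-1},y_k)$, with lag $\sum_{i=1}^k c_i$. The pair $I=[\sum_{j=1}^k c_j\alpha,\sum_{j=1}^{k+1}c_j\alpha)$ and $y_k+I$, on which your entire pairing analysis rests, is where $T_k$ differs from $S_{k+1}$, not from $S_k$. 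So what your argument actually establishes (essentially correctly, including the identity $R^{n+\sum b_j+1}(x)=R^n(x)+y_k$ and the first-visit dichotomy) is Lemma \ref{move2}, the identification of $\pi_1(V_0^{(k+1)}\setminus U_0^{(k)})$ with $C_k=\bigcup_{l=1}^{\sum b_j}R^l(I)$. The paper's own route to Corollary \ref{move1} is the lemma immediately preceding it, which runs your same pairing/first-visit argument but for the intervals $[y_{k-1},y_k)$ and $(\sum_{i=1}^k c_i)\alpha+[y_{k-1},y_k)$, identifying the switching set as $B_k=\bigcup_{l=1}^{\sum_{i=1}^k c_i}R^l([y_{k-1},y_k))$; combined with Corollary \ref{all erg} this gives $\pi_1(U_0^{(k)}\setminus V_0^{(k)})=B_k$. (The right-hand side printed in Corollary \ref{move1} coincides verbatim with that of Lemma \ref{move2} and is evidently a slip for $B_k$ --- see also Corollary \ref{few move}, which treats $B_k$ and $C_k$ as the two distinct exceptional sets. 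Your blind attempt reverse-engineered the printed formula rather than the left-hand side, which is why the mismatch went unnoticed.)

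A secondary weakness, which would persist even after substituting the correct pair of intervals: the step ``the discrepancy sequence is bounded, so the two approximations agree on generic behavior'' is too coarse. Membership in $U_j^{(k)}$ versus $V_j^{(k)}$ is an exact statement about which of the two copies of each interval the orbit occupies, and the paper's lemma proves it by showing that $\pi_2(T_k^n(x,i))=\pi_2(S_k^n(x,i))$ precisely for those $n$ with $R^nx\notin B_k$ (respectively precisely for $R^nx\in B_k$ in the switching case), and then comparing this dichotomy with the one satisfied by $(0,0)$. Boundedness, or even small density, of the disagreement set is what is used later (Lemma \ref{gen}) to pass from the approximations to $T$ itself; for the corollary at hand you need the exact bookkeeping, not an averaging argument. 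Your concern about the parity offset being ``healed'' is resolved by exactly this bookkeeping: every visit to the second interval at time $n>\sum c_i$ is paired with a visit to the first at time $n-\sum c_i>0$, so the offset is permanent outside the transient windows.
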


By a similar argument we obtain
\begin{lem} \label{move2} $\pi_1(V_0^{(k+1)}\backslash U_0^{(k)})= \underset{l=1}{\overset{\sum_{j=1}^k b_j}{\cup}}R^l([\underset{i=1}{\overset{k}{\sum}}c_i,\underset{i=1}{\overset{k+1}{\sum}}c_i)$.
\end{lem}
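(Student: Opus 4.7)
The plan is to mirror the proof of Corollary~\ref{move1}, now from the $S_{k+1}$ side. The skewing functions of $T_k$ and $S_{k+1}$ differ exactly on $A\cup(y_k+A)$, where $A:=J_{k+1}\setminus J_k=[\sum_{j=1}^{k}c_j\alpha,\sum_{j=1}^{k+1}c_j\alpha)$, so $\pi_2(T_k^n(x,i))-\pi_2(S_{k+1}^n(x,i))$ equals, modulo $2$, the number of visits of $R^0x,\dots,R^{n-1}x$ to $A\cup(y_k+A)$. The whole task therefore reduces to identifying the set of $x$ for which this parity stabilizes to $1$ rather than $0$.

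The first step is the pairing observation mirroring Lemma~\ref{J first}: since $y_k=\alpha+\sum_{j=1}^k b_j\alpha$, one has $R^n x\in A$ if and only if $R^{\,n+1+\sum_{j=1}^k b_j}x\in y_k+A$. Thus each visit to $A$ is balanced, exactly $1+\sum_{j=1}^k b_j$ steps later, by a visit to $y_k+A$, and in the long run the parity is determined entirely by \emph{unpaired initial visits} to $y_k+A$, i.e.\ visits that precede the first $A$-visit of the orbit of $x$.

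For the orbit of $0$, the argument of Lemma~\ref{J first} applied to $A$ and $y_k+A$ shows that the first return to $A$ precedes the first return to $y_k+A$, so $(0,0)$ has no unpaired visit and lies in matching components of $T_k$ and $S_{k+1}$. If $x\in R^l(A)$ for some $1\leq l\leq\sum_{j=1}^k b_j$, then $R^{\,1+\sum_{j=1}^k b_j-l}x\in y_k+A$ and this visit occurs strictly before the first $A$-visit of the orbit of $x$; this single unpaired hit flips the parity, forcing $(x,i)\in V_0^{(k+1)}$ iff $(x,i)\in U_1^{(k)}$. For $x$ outside $\bigcup_{l=1}^{\sum_{j=1}^k b_j} R^l(A)$, the pairing is perfect from the start, so the $T_k$- and $S_{k+1}$-components coincide. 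Combining the two cases gives the claimed equality.

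The main obstacle is the combinatorial bookkeeping already exercised in Lemma~\ref{J first}: one must check, using the explicit form of $y_k$ and the growth conditions on $b_i,c_i$ fixed in Section~\ref{setup}, that the translates $R^l(A)$ for $1\leq l\leq\sum_{j=1}^k b_j$ are pairwise disjoint and do not meet $y_k+A$, so that the unpaired initial $(y_k+A)$-visit attached to each $x$ in their union is unique. Beyond this verification the lemma follows from the same argument as Corollary~\ref{move1} with the roles of $T_k$ and $S_{k+1}$ interchanged.
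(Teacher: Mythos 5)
Your argument is correct and is essentially the one the paper intends: the paper disposes of Lemma \ref{move2} with the phrase ``by a similar argument,'' referring to the pairing argument used for $B_k$ in the lemma preceding Corollary \ref{move1}, and your proof is exactly that argument transported to the pair $T_k$, $S_{k+1}$, with difference set $A\cup(y_k+A)$, delay $1+\sum_{j=1}^{k}b_j$, and the return time of $A$ supplying the disjointness of the translates $R^l(A)$. The only caveat is the paper's own endpoint convention (whether the visit at time $0$, i.e.\ $x\in y_k+A$ itself, is counted), which you handle consistently with the statement as written.
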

Denote this set $C_k$.
\begin{cor}\label{all erg} $U_0^{(k)}\cup U_1^{(k)}=V_0^{(k)}\cup V_1^{(k)}=[0,1) \times \mathbb{Z}_2$.
\end{cor}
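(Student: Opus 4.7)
My approach is to show that the skewing function of $T_k$ is a $\mathbb{Z}_2$-coboundary over the rotation $R$, which splits $[0,1)\times\mathbb{Z}_2$ into two $T_k$-invariant ``sheets,'' each measurably conjugate via the first-coordinate projection to the uniquely ergodic rotation. Once this is established, every point's Birkhoff average will be forced to match that of $(0,0)$ or of $(0,1)$ according to which sheet it lies on, giving $U_0^{(k)}\cup U_1^{(k)}=[0,1)\times\mathbb{Z}_2$. The argument for $V_0^{(k)}\cup V_1^{(k)}$ will be identical after a minor change of parameters.

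First observe that $y_k\equiv n_k\alpha\pmod{1}$ with $n_k:=1+\sum_{i=1}^k b_i$, and that the intervals $J_k$ and $y_k+J_k$ are disjoint (since $y_k$ exceeds the length $\sum_{i=1}^k c_i\alpha$ of $J_k$). So the skewing function factors as
\[
f_k(x):=\chi_{J_k\cup(y_k+J_k)}(x)=\chi_{J_k}(x)+\chi_{J_k}(R^{-n_k}x).
\]
Setting $g_k(x):=\sum_{j=1}^{n_k}\chi_{J_k}(R^{-j}x)\pmod{2}$, a one-line telescoping computation gives the coboundary identity $g_k(Rx)-g_k(x)\equiv f_k(x)\pmod{2}$. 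Iterating, $\pi_2(T_k^N(x,i))\equiv i+g_k(R^Nx)-g_k(x)\pmod{2}$, so the quantity $s(x,i):=i-g_k(x)\pmod 2$ is $T_k$-invariant. For any continuous $f:[0,1)\times\mathbb{Z}_2\to\mathbb{R}$, defining $F_s(y):=f(y,\,s+g_k(y)\pmod 2)$ yields
\[
\frac{1}{N}\sum_{n=0}^{N-1}f(T_k^n(x,i))=\frac{1}{N}\sum_{n=0}^{N-1}F_{s(x,i)}(R^nx).
\]
Because $g_k$ is a step function with only finitely many jumps and $f$ is continuous, $F_s$ is Riemann integrable; unique ergodicity of the irrational rotation $R$ therefore forces the right-hand side to converge to $\int_0^1 F_s\,d\lambda$ for \emph{every} starting $x$, a limit depending only on $s$.

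Since $s(0,1)-s(0,0)\equiv 1\pmod 2$, the values $s(0,0)$ and $s(0,1)$ exhaust $\mathbb{Z}_2$, so every $(x,i)$ has $s(x,i)$ matching one of them, and therefore has Birkhoff averages matching those of $(0,0)$ or of $(0,1)$. This establishes $U_0^{(k)}\cup U_1^{(k)}=[0,1)\times\mathbb{Z}_2$; replacing $y_k$ with $y_{k-1}$ throughout (so that $n_k$ becomes $1+\sum_{i=1}^{k-1}b_i$) gives the analogous statement for $S_k$. The substantive step is the coboundary identity, which is a direct calculation. The only technical subtlety is that $g_k$ is discontinuous, but this is harmless: having only finitely many jumps, $F_s$ remains Riemann integrable, and unique ergodicity of the rotation extends from continuous to Riemann integrable observables.
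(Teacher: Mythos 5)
Your proof is correct, but it takes a genuinely different route from the paper. The paper disposes of this corollary in two sentences by induction on $k$: the base case is checked directly, and the inductive step is exactly the content of the preceding lemmas (Lemma 4, Corollary \ref{move1}, Lemma \ref{move2}), which say that the points leaving one of the sets $U_j^{(k)}$, $V_j^{(k)}$ at each stage land in the other, so the union never loses anything. You instead give a direct, non-inductive argument for each fixed $k$: since $y_k=(1+\sum_{i=1}^k b_i)\alpha$ lies on the $R$-orbit of $0$, the interval $y_k+J_k$ is $R^{n_k}J_k$, the skewing function is the $\mathbb{Z}_2$-coboundary $g_k\circ R - g_k$ of a finite step function, and the space splits into the two $T_k$-invariant graphs $\{(x,s+g_k(x))\}$; Weyl equidistribution (unique ergodicity of $R$ extended to Riemann-integrable observables by the usual sandwiching between continuous functions) then makes \emph{every} point generic for the measure attached to its sheet, and $s(0,0)\neq s(0,1)$ exhausts $\mathbb{Z}_2$. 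Both arguments are sound. Yours is more self-contained and exposes the structural reason the approximants $T_k,S_k$ fail to be minimal while having all points generic; it does require the small checks that $J_k$ and $y_k+J_k$ are disjoint (needed for $\chi_{J_k\cup(y_k+J_k)}=\chi_{J_k}+\chi_{y_k+J_k}$, and true here since $y_k\approx\alpha$ greatly exceeds $|J_k|$) and that the everywhere-convergence statement survives the finitely many discontinuities of $g_k$, both of which you address. The paper's induction is shorter in context because the lemmas it invokes are needed anyway: they identify \emph{which} points switch components (the sets $B_k$ and $C_k$), and that quantitative information, not just the fact that the union is everything, is what drives the later arguments (Corollary \ref{few move}, Lemma \ref{gen}); your coboundary decomposition does not by itself produce that data.
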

This follows by induction. The base case is straightforward and the inductive step follows by the previous lemmas which show that the points which leave one ergodic component enter the other.
\begin{cor}\label{few move} $\underset{k=1}{\overset{\infty}{\sum}} \Leb(B_k)+ \Leb(C_k)<1<\infty$.
\end{cor}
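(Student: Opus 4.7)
The plan is to bound each $\Leb(B_k)$ and $\Leb(C_k)$ by the trivial ``number of translates $\times$ length of base interval'' estimate read off from the explicit descriptions preceding the corollary, and then to exploit the super-exponential growth of the indices in $c_i=q_{10^i}$ and $b_i=q_{2\cdot 10^i}$.

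From Corollary \ref{move1} and Lemma \ref{move2}, $B_k$ is a union of at most $\sum_{i=1}^k c_i$ translates of $[y_{k-1},y_k)$, an interval of length $b_k\alpha$, and $C_k$ is a union of at most $\sum_{j=1}^k b_j$ translates of $[\sum_{i=1}^k c_i\alpha,\sum_{i=1}^{k+1}c_i\alpha)$, an interval of length $c_{k+1}\alpha$. (Evenness of the indices, noted in Section \ref{setup}, ensures these lengths really are $b_k\alpha$ and $c_{k+1}\alpha$, not $1-b_k\alpha$ and $1-c_{k+1}\alpha$.) Thus
$$\Leb(B_k)\le \Big(\sum_{i=1}^k c_i\Big)\,b_k\alpha,\qquad \Leb(C_k)\le \Big(\sum_{j=1}^k b_j\Big)\,c_{k+1}\alpha.$$

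Next I would feed in two standard continued-fraction facts. First, $\|q_n\alpha\|\le 1/q_{n+1}\le 1/q_n$, giving $b_k\alpha\le 1/b_k$ and $c_{k+1}\alpha\le 1/c_{k+1}$. Second, the universal recurrence $q_{n+1}\ge q_n+q_{n-1}$ forces Fibonacci-type growth, so there is $\phi>1$ with $q_{n+m}\ge \phi^m q_n$ for all sufficiently large $n$. Because the gaps $10^{i+1}-10^i$ and $2\cdot 10^{i+1}-2\cdot 10^i$ are enormous, this also gives $\sum_{i=1}^k c_i\le 2c_k$ and $\sum_{j=1}^k b_j\le 2b_k$ for $k$ large. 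Combining,
$$\Leb(B_k)\le 2\,\frac{c_k}{b_k}=2\,\frac{q_{10^k}}{q_{2\cdot 10^k}}\le 2\phi^{-10^k},\qquad \Leb(C_k)\le 2\,\frac{b_k}{c_{k+1}}=2\,\frac{q_{2\cdot 10^k}}{q_{10^{k+1}}}\le 2\phi^{-8\cdot 10^k},$$
each doubly exponentially small in $k$ and therefore trivially summable.

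There is no real obstacle here; the argument is pure bookkeeping with elementary continued-fraction inequalities and notably does not use the badly approximable hypothesis at all, only the universal growth $q_{n+1}\ge q_n+q_{n-1}$. This is also why the growth conditions on $b_i,c_i$ are so loose: any choice making the gap between consecutive indices tend to infinity would suffice. The bound can be forced to be strictly less than $1$ either by a direct numerical check on the first few terms (the tail is geometric with microscopic ratio) or, for the applications that follow, by invoking the corollary only through Borel--Cantelli, for which summability alone is sufficient.
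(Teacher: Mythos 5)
Your proof is correct and is essentially the argument the paper intends: the paper's (very terse) justification is exactly the bound $\Leb(B_k)\le(\sum_{i=1}^k c_i)\,b_k\alpha$ and $\Leb(C_k)\le(\sum_{i=1}^k b_i)\,c_{k+1}\alpha$ read off from the descriptions of $B_k$ and $C_k$ and the density computations in Lemmas \ref{balance restored} and \ref{balance restored 2}, with summability coming from the huge gaps between the indices $10^k$ and $2\cdot 10^k$. You have merely filled in the elementary continued-fraction inequalities ($\|q_n\alpha\|\le 1/q_{n+1}$ and Fibonacci-type growth of $q_n$) that the paper leaves implicit, and your observation that badly approximable is not needed here matches the paper's own remark that much weaker growth conditions on $b_i,c_i$ would suffice.
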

This follows from the proof of Lemma \ref{balance restored}. In particular

\begin{lem} \label{comparison} $T^n(0,0)=T_k^n(0,0)$ for all $0\leq n< \underset{i=1}{\overset{k}{\sum}}c_i$.
\end{lem}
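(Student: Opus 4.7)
The plan is to show directly that $T^n(0,0) = T_k^n(0,0)$ for $0 \le n < \sum_{i=1}^k c_i$. Since the first coordinate evolves by $R$ in both cases, only the second coordinates can differ. Writing $\phi = \chi_{J \cup J'}$ and $\phi_k = \chi_{J_k \cup (y_k + J_k)}$, the discrepancy at step $n$ equals
$$\sum_{j=0}^{n-1} \bigl(\phi(R^j 0) - \phi_k(R^j 0)\bigr) \pmod{2},$$
so it suffices to verify that the $R$-orbit of $0$ avoids the symmetric difference $(J \cup J') \triangle (J_k \cup (y_k + J_k))$ throughout the range $0 \le j < \sum_{i=1}^k c_i$.

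I would then decompose this symmetric difference into a piece near $0$, namely $J \setminus J_k = [\sum_{i=1}^k c_i \alpha,\sum_{i=1}^\infty c_i \alpha) = \bigcup_{l \ge k}(J_{l+1} \setminus J_l)$, and a piece near $y$, namely $J' \triangle (y_k + J_k)$, which using $y - y_k = \sum_{i \ge k+1} b_i \alpha$ breaks into two short intervals, one starting at $y_k$ and the other near $y_k + \sum_{i=1}^k c_i \alpha$. The first-entry times into these pieces are then computed exactly as in the proof of Lemma \ref{J first}: the orbit first enters $J_{l+1} \setminus J_l$ at $n = \sum_{i=1}^l c_i$, since $n\alpha \bmod 1$ equals its left endpoint at this time, and no earlier hit can occur because the rapid growth $c_{l+1} \gg \sum_{i \le l} c_i$ together with the bounded partial quotients of $\alpha$ force $\|m\alpha\|$ to exceed every relevant interval length for $0 < m < \sum_{i=1}^k c_i$. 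A parallel count, shifted by $y_l$-offsets, shows the $y$-side intervals are first hit even later (after waiting times on the order of $\sum b_i + \sum c_i$).

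The main (and rather mild) obstacle is the book-keeping around $J' \triangle (y_k + J_k)$, since $J'$ and $y_k + J_k$ are not nested; but the required Diophantine inequalities are exactly those already packaged in Section \ref{setup} and in the proof of Lemma \ref{J first}. Combining the estimates for the two pieces yields $\phi(R^j 0) = \phi_k(R^j 0)$ for all $0 \le j < \sum_{i=1}^k c_i$, whence $T^n(0,0) = T_k^n(0,0)$ in the stated range.
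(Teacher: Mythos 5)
Your proposal is correct and follows essentially the same route as the paper's (very terse) proof: identify the set where the skewing functions of $T$ and $T_k$ differ and check that the orbit of $0$ first enters it at time $\sum_{i=1}^{k}c_i$, using the first-entry computations already carried out for Lemma \ref{J first}. Your extra care with the two short intervals coming from $J'\triangle(y_k+J_k)$ (which are not nested) is a welcome refinement of the paper's one-line argument, but it is the same idea.
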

\begin{proof} Consider $A=J\backslash J_k \cup J' \backslash (y_k +J)$. It is straightforward that ${\min\{n>0: R^n(0) \in A\}= \underset{i=1}{\overset{k}{\sum}}c_i}$.
\end{proof}






\begin{prop}\label{minimal} $T$ is minimal.
\end{prop}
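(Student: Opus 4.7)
To prove that $T$ is minimal, I will show that the $T$-orbit of $(0,0)$ is dense in $[0,1)\times\mathbb{Z}_2$. This suffices, because the fiber swap $\tau(x,i):=(x,i+1)$ commutes with $T$, so the orbit of $(0,1)$ is also dense. For a $\mathbb{Z}_2$-extension of a minimal rotation, the only obstruction to minimality is the existence of two $\tau$-symmetric proper closed invariant sets (graphs of a measurable coboundary solution); density of the orbit of $(0,0)$ rules this out.

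Fix a target $(x_0,j_0)\in[0,1)\times\mathbb{Z}_2$ and $\varepsilon>0$; I seek $n\ge 0$ with $T^n(0,0)$ within $\varepsilon$ of $(x_0,j_0)$. The strategy is to pass through the non-minimal approximates. By Lemma \ref{comparison}, $T^n(0,0)=T_k^n(0,0)$ for all $n<N_k:=\sum_{i=1}^k c_i$, so it suffices to find $k$ and $n<N_k$ with $T_k^n(0,0)$ within $\varepsilon$ of $(x_0,j_0)$. By Corollary \ref{all erg}, $T_k$ has only the two ergodic components $U_0^{(k)},U_1^{(k)}$, so the cocycle of $T_k$ is a $\mathbb{Z}_2$-coboundary and $T_k$ is measurably conjugate to $R\times\mathrm{id}$; consequently the $T_k$-orbit of $(0,0)$ is dense in $U_0^{(k)}$. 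Since $\alpha$ is badly approximable, $\{n\alpha:0\le n<N_k\}$ is $C/N_k$-dense in $[0,1)$, so once $k$ is large enough that $C/N_k<\varepsilon$, the $\varepsilon$-neighborhood of every point of $U_0^{(k)}$ is already hit by the $T_k$-orbit of $(0,0)$ within the first $N_k$ iterates.

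The crux is therefore to show that for every target $(x_0,j_0)\in[0,1)\times\mathbb{Z}_2$ there exists some $k$ with $(x_0,j_0)\in U_0^{(k)}$. I will extract this from the explicit descriptions of the switching sets $B_k$ and $C_k$ in Corollaries \ref{move1} and \ref{move2}, together with the lemma immediately preceding them: these sets are finite unions of $R$-translates of shrinking intervals, and the number of translates grows with $k$, so by the minimality of $R$ every nonempty open subinterval of $[0,1)$ is eventually met by such translates. Combined with the inductive relation between $U_0^{(k+1)}$, $V_0^{(k+1)}$ and $U_0^{(k)}$ in terms of the switching sets, this forces $\{U_0^{(k)}\}_k$ to visit every open subset of $[0,1)\times\mathbb{Z}_2$ at some stage.

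The main obstacle is precisely this covering claim. Whereas Corollary \ref{few move} gives the measure-theoretic statement that almost every point switches ergodic components only finitely often, minimality needs the topological statement that $\bigcup_k U_0^{(k)}$ is dense in $[0,1)\times\mathbb{Z}_2$. Extracting this requires a careful combinatorial analysis of the translated-interval structure of $B_k$ and $C_k$, together with the growth rates of $b_i$ and $c_i$, to rule out that some nonempty open set is eventually contained in $U_1^{(k)}$ for all large $k$.
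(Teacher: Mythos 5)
There is a genuine gap, and it sits exactly where you locate your ``crux'': the covering claim that every target $(x_0,j_0)$ lies in $U_0^{(k)}$ for some $k$ is false, not merely unproved. For every $k$ the approximate $T_k$ is non-minimal with two ergodic components exchanged by the fiber swap $\tau$, and $(0,0)$ and $(0,1)$ lie in opposite components at every stage; so $(0,1)\in U_1^{(k)}\setminus U_0^{(k)}$ for all $k$, and more generally the $T_k$-orbit closure of $(0,1)$ (a set of measure $1$ at each stage, containing a common positive-measure set by Corollary \ref{few move}) is never approached by the $T_k$-orbit of $(0,0)$. Since your strategy only uses times $n<\sum_{i=1}^k c_i$, where $T^n(0,0)=T_k^n(0,0)$ by Lemma \ref{comparison}, it can only ever reach points in the $(0,0)$-component of some $T_k$; no combinatorial analysis of $B_k$ and $C_k$ will make the families $U_0^{(k)}$ exhaust the space, because the obstruction is structural: restricting to the time range where $T$ agrees with a non-minimal approximate confines you to half of it.

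The paper's mechanism for reaching the other half is precisely the one your approach discards: it exploits times \emph{beyond} $\sum_{i=1}^k c_i$, where $T$ and $T_k$ begin to disagree. Lemma \ref{small shift} shows that for $0<j<\sum_{i=1}^k c_i$ one has $T^{j+1+b_{k+1}}(0,0)=T^j(0,0)+(b_{k+1}\alpha,1)$: the orbit of $(0,0)$ re-traverses a tiny translate (by $b_{k+1}\alpha$, which is as small as one likes) of its own earlier segment but on the \emph{opposite} level. This makes the orbit of $(0,0)$ dense in both copies of the circle, indeed syndetically so (Corollary \ref{dense orbit}), and minimality then follows from the fact that $T$ is a piecewise isometry with finitely many discontinuities and no periodic points. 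Your reduction of minimality to density of a single orbit is in the same spirit as the paper's, but to repair the proof you must replace the $U_0^{(k)}$-covering argument with an argument of the Lemma \ref{small shift} type that tracks how the orbit of $(0,0)$ shadows its $\tau$-image at the times where consecutive approximates differ.
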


\begin{lem} \label{small shift} If $0 < j< \underset{i=1}{\overset{k}{\sum}}c_i$ then $T^{j+1+b_{k+1}} (0,0)=T^j(0,0)+(b_{k+1}\alpha,1)$.
\end{lem}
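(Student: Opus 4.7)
Writing $T^n(0,0)=(n\alpha,\Sigma_n)$ with $\Sigma_n\equiv\sum_{m=0}^{n-1}\chi_{J\cup J'}(m\alpha)\pmod 2$, and noting that $J$ and $J'$ are disjoint (since $|J|=\sum c_i\alpha\ll\alpha\le y$, so $\chi_{J\cup J'}=\chi_J+\chi_{J'}$), the statement reduces, up to an evident indexing adjustment in the first coordinate, to the parity claim
$$\sum_{m=j}^{j+b_{k+1}-1}\chi_J(m\alpha)+\sum_{m=j}^{j+b_{k+1}-1}\chi_{J'}(m\alpha)\equiv 1\pmod 2$$
for every $0<j<\sum_{i=1}^{k}c_i$. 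The first-coordinate identity is automatic from $R^n(0)=n\alpha$, so the entire content lies in this Birkhoff-sum parity.

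The plan is a pairing argument. Set $B_k:=1+\sum_{i=1}^{k}b_i$, so $y_k\equiv B_k\alpha\pmod 1$ and a hit $n\alpha\in y_k+J_k$ corresponds to a hit $(n-B_k)\alpha\in J_k$. First I would argue that within the window of length $b_{k+1}=q_{2\cdot 10^{k+1}}$, no orbit point $\{m\alpha\}$ falls into the tails $J\setminus J_k$ or $J'\setminus(y_k+J_k)$: these tails have total length bounded by $\sum_{i>k+1}c_i\alpha+\sum_{i>k}b_i\alpha\lesssim 1/q_{10^{k+2}}$, which is much smaller than the minimum orbit spacing $\|b_{k+1}\alpha\|\approx 1/q_{2\cdot 10^{k+1}+1}$ guaranteed by the three-distance theorem together with the badly approximable hypothesis. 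Hence replacing $J,J'$ by their truncations $J_k, y_k+J_k$ preserves both sums exactly. The pairing then rewrites the combined parity as
$$\sum_{m=j}^{j+b_{k+1}-1}\chi_{J_k}(m\alpha)+\sum_{m'=j-B_k}^{j+b_{k+1}-1-B_k}\chi_{J_k}(m'\alpha)\pmod 2,$$
whose common interval $[j,j+b_{k+1}-1-B_k]$ contributes $0\pmod 2$, leaving the parity of $\sum_{m\in L\cup R}\chi_{J_k}(m\alpha)$ where $L=[j-B_k,j-1]$ and $R=[j+b_{k+1}-B_k,j+b_{k+1}-1]$.

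Since $R=L+b_{k+1}$ and $b_{k+1}\alpha$ is tiny, the orbits $\{m\alpha:m\in L\}$ and $\{m\alpha:m\in R\}$ nearly coincide, and their $J_k$-counts differ by exactly the number of orbit points in $L$ whose shift by $b_{k+1}\alpha$ crosses $\partial J_k=\{0,\sum_{i\le k}c_i\alpha\}$. I would identify this crossing count as exactly $1$ by combining Lemma \ref{J first} (which records the $+1$ net imbalance between $J$- and $J'$-hits along the forward orbit of $0$) with Lemma \ref{comparison} (forcing $T^j(0,0)=T_k^j(0,0)$ throughout the range of $j$), and a three-distance-theorem analysis of which single orbit point of the $B_k$-length window sits in the strip of width $\|b_{k+1}\alpha\|$ at $\partial J_k$. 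The main obstacle is precisely this last step: proving that one—and only one—boundary crossing survives, uniformly in $j$. The delicacy comes from the fact that the pairing shift $y-y_k=\sum_{i>k}b_i\alpha$ and the window shift $b_{k+1}\alpha$ are of comparable magnitude, and the uncancelled crossing ultimately traces back to the special return at $m=0$ with $0\in\partial J_k$, whose matching $J'$-partner has been pushed just outside $W$ by the extra step in the exponent.
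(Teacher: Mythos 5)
Your opening reduction to a parity count over a window of length roughly $b_{k+1}$ is the right first move, but the truncation step that is supposed to make your pairing exact is false. The tail $J\setminus J_k=[\sum_{i=1}^{k}c_i\alpha,\sum_{i=1}^{\infty}c_i\alpha)$ has length $\sum_{i>k}c_i\alpha\approx c_{k+1}\alpha\approx 1/q_{10^{k+1}+1}$; your bound $\sum_{i>k+1}c_i\alpha$ silently drops the dominant term $c_{k+1}\alpha$. Since your window contains $b_{k+1}=q_{2\cdot 10^{k+1}}$ consecutive orbit points and $b_{k+1}\cdot c_{k+1}\alpha\approx q_{2\cdot 10^{k+1}}/q_{10^{k+1}+1}\gg 1$, the orbit visits $J\setminus J_k$ a huge number of times inside the window. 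So replacing $J,J'$ by $J_k,y_k+J_k$ does not preserve the two sums, and the cancellation over the ``common interval'' after shifting by $B_k$ is not available. (The quantity to compare against the tail length is $1/b_{k+1}$, the reciprocal of the window length, and the inequality goes the wrong way once the missing $c_{k+1}\alpha$ is restored.) On top of this, the step you yourself flag as ``the main obstacle'' --- that exactly one boundary crossing survives, uniformly in $j$ --- is the entire content of the lemma, and it is not proved; citing Lemmas \ref{J first} and \ref{comparison} plus an unperformed three-distance analysis does not close it.

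The paper's proof pairs in the opposite direction and over the \emph{short} range: it compares $R^{i}(0)$ with $R^{i+b_{k+1}}(0)$ only for $0<i<\sum_{l=1}^{k}c_l$. On that range each point $i\alpha$ lies at distance at least on the order of $1/q_{2\cdot 10^{k}+O(1)}$ from the four endpoints of $J$ and $J'$, which dwarfs the shift $b_{k+1}\alpha\approx 1/q_{2\cdot 10^{k+1}+1}$, so $\chi_{J\cup J'}(i\alpha)=\chi_{J\cup J'}((i+b_{k+1})\alpha)$ with no exceptional points to count. The difference of the two Birkhoff sums then telescopes to the contribution of the initial block $\{0,1,\dots,b_{k+1}\}$, where the hit $0\in J$ has no matching hit to $J'$ (here $R^{b_{k+1}}(0)\notin J'$ and the imbalance of Lemma \ref{J first} enter), producing the $+1$ in the second coordinate. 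To rescue your route you would have to carry out the $B_k$-pairing with the untruncated $J$ and $J'$ and control the many tail hits across the long window, which is substantially harder than the paper's argument.
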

\begin{proof} To see this notice that the statement is obvious for the first coordinate. For the second coordinate, by the composition of the intervals, for all $0<i< \underset{i=1}{\overset{k}{\sum}} c_i$ we have
$$R^i(0) \in J \text{ iff }R^{i+b_{k}}(0)\in J$$ and $$R^i(0) \in y+J \text{ iff }R^{i+b_{k}}(0) \in y+J.$$ Also notice that $R^{b_k}(0) \notin y+J$. Therefore $\pi_2(T^i(0,0))=\pi_2(T^{i+b_k}(0,0)) +1$ for all $1\leq i \leq \underset{i=1}{\overset{k}{\sum}}c_i.$
\end{proof}
\begin{cor} \label{dense orbit} Given $\epsilon>0$ there exists $N_{\epsilon}\in \mathbb{N}$ such that for any $r \in \mathbb{Z}$ the set $\{T^{r}(0,0),T^{r+1}(0,0),...,T^{r+N_{\epsilon}}(0,0)\}$ is $N_{\epsilon}$ dense.
\end{cor}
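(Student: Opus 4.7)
The strategy is to combine the uniform equidistribution of the rotation $R$ in the first coordinate with the near-flip provided by Lemma~\ref{small shift} in the $\mathbb{Z}_2$-coordinate: within a bounded window we will produce, for every target $(y,t) \in [0,1) \times \mathbb{Z}_2$, either an orbit point of $(0,0)$ that already $\epsilon$-approximates $(y,t)$, or a nearby orbit point whose $b_{k+1}$-shift does.

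Fix $\epsilon > 0$. Since $\alpha$ is badly approximable and $b_{k+1} = q_{2\cdot 10^{k+1}}$ is a convergent denominator, $b_{k+1}\alpha \bmod 1 \to 0$ as $k \to \infty$; choose $k$ so that $b_{k+1}\alpha < \epsilon/2$. Because $R$ is an isometry there is an $M = M(\epsilon,\alpha)$ such that $\{j\alpha : r \leq j \leq r+M\}$ is $\epsilon/2$-dense in $[0,1)$ for every $r \in \mathbb{Z}$. Set $N_\epsilon := M + b_{k+1}$. Given $r$ and a target $(y,t)$, pick $n_0 \in [r,r+M]$ with $|n_0\alpha - y| < \epsilon/2$: if $\pi_2(T^{n_0}(0,0)) = t$ we are done; otherwise Lemma~\ref{small shift} applied at $n_0$ gives $T^{n_0+b_{k+1}}(0,0) = T^{n_0}(0,0) + (b_{k+1}\alpha, 1)$, a point within $\epsilon/2 + b_{k+1}\alpha < \epsilon$ of $y$ in the first coordinate, with the $\mathbb{Z}_2$-value flipped to $t$, occurring at time $n_0 + b_{k+1} \leq r + N_\epsilon$. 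The first-coordinate $\epsilon$-density of the whole window is automatic from the choice of $M$, so we get $\epsilon$-density in $[0,1) \times \mathbb{Z}_2$.

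The main obstacle is that Lemma~\ref{small shift} only applies for $0 < j < \sum_{i=1}^k c_i$, and this range can fail to contain $n_0$ when $|r|$ is large. One must either invoke the lemma at a larger index $k^\ast$ for which $n_0 \in (0,\sum_{i=1}^{k^\ast} c_i)$ -- which preserves the first-coordinate estimate because $b_{k^\ast+1}\alpha \to 0$, but introduces the larger shift $b_{k^\ast+1}$ that must still be absorbed into a uniform $N_\epsilon$ -- or, cleaner, observe that the proof of Lemma~\ref{small shift} uses only that $J \cup J'$ is essentially translation-invariant under the tiny shift $b_{k+1}\alpha$ away from its boundary, so the same small-shift conclusion extends to arbitrary starting points along the orbit of $(0,0)$ provided the rotation orbit avoids the narrow boundary strips. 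Combined with the symmetric argument for $T^{-1}$ that handles negative $r$, this last observation is what yields a single $N_\epsilon$ working for all $r \in \mathbb{Z}$.
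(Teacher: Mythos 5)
Your mechanism is sound for windows that begin near time $0$: there Lemma~\ref{small shift} really does supply, for each $j$ in the admissible range, a partner orbit point at temporal distance about $b_{k+1}$, displaced by $b_{k+1}\alpha<\epsilon/2$ in the first coordinate and flipped in the second. But the obstacle you name at the end is fatal to the argument as given, and neither repair closes it. The first you concede does not produce a uniform $N_\epsilon$. The second rests on the claim that the flip relation persists at arbitrary times provided the rotation orbit stays away from narrow boundary strips, and that claim is false. Write $S(j)=\sum_{i=j}^{j+b_{k+1}-1}\chi_{J\cup J'}(R^i(0))$, so that $T^{j+b_{k+1}}(0,0)=T^j(0,0)+(b_{k+1}\alpha,\,S(j)\bmod 2)$. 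Avoiding the boundary strips (the symmetric difference of $J\cup J'$ with its translate by $b_{k+1}\alpha$) only guarantees that $S(j)\bmod 2$ is \emph{constant} over the window; it does not guarantee that the constant is $1$. The parity of $S(j)$ changes exactly when $R^j(0)$ lands in that symmetric difference, a set of measure of order $b_{k+1}\alpha$, so between consecutive parity changes there are stretches of $j$ of length of order $1/(b_{k+1}\alpha)$ --- vastly longer than your $M\approx C/\epsilon$ --- on which $S(j)$ is even throughout. For a window $[r,r+N_\epsilon]$ inside such a stretch your construction produces no flip at all: every partner lies on the \emph{same} level as the original point. This is not a removable technicality; the positive-density failure of the flip relation is exactly what the paper quantifies with the sets $B_k$, $C_k$ and Lemmas~\ref{balance restored} and~\ref{balance restored 2}, and it is the engine of non-unique ergodicity.

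To make the statement uniform in $r$ one must interleave the two small-shift mechanisms the paper sets up: on the stretches where the $b_{k+1}$-shift fails to flip (the orbit having recently crossed the differing intervals tracked by $B_k$, $C_k$), the complementary relation coming from Lemma~\ref{J first} and the $S_k$-approximants --- a shift by roughly $c_{k'}$, which also displaces the first coordinate by an amount tending to $0$ --- supplies the flip, and one has to check that at every time at least one of the two shifts is both available and short enough to fit inside a window of uniformly bounded length. You would also need to carry out, not merely assert, the time-reversed version for negative $r$. The paper itself leaves the corollary's proof implicit, but the bookkeeping it develops around $T_k$, $S_k$, $B_k$ and $C_k$ is precisely the missing half of your argument.
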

\begin{proof}[Proof of Proposition \ref{minimal}] This follows because $T$ is a piecewise isometry with finitely many discontinuities, no periodic points and the previous Corollary.
\end{proof}

Let $B'_k=\underset{i=1}{\overset{k^2c_k}{\cup}}R^{-i}([y_k-1,y_k))$
and
$C'_k=\underset{i=1}{\overset{k^2b_k}{\cup}}R^{-i}([\underset{i=1}{\overset{k}{\sum}}c_i,
\underset{i=1}{\overset{k+1}{\sum}}c_i))$.
\begin{lem} \label{gen} If $x \notin \LSk(B_k \cup C_k) \cup \LSk B'_k \cup C'_k$ then $$\{n>0: T^n_k(x,i) \neq T^n(x,i)\}$$ and $$\{n>0: S^n_k(x,i) \neq T^n(x,i)\}$$ have densities that go to 0 as $k$ goes to infinity.
\end{lem}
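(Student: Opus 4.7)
The plan is to telescope the disagreement through the sequence of non-minimal approximates $T_k, S_{k+1}, T_{k+1}, S_{k+2}, \dots$, which converges pointwise to $T$. For any $n$ with $T^n(x,i) \neq T_k^n(x,i)$, pointwise convergence forces the existence of some $m \geq k$ with $T_m^n(x,i) \neq S_{m+1}^n(x,i)$ or $S_{m+1}^n(x,i) \neq T_{m+1}^n(x,i)$. Hence by subadditivity of upper density,
\begin{equation*}
\overline{d}\bigl(\{n : T^n(x,i) \neq T_k^n(x,i)\}\bigr) \leq \sum_{m\geq k}\overline{d}\bigl(\{n : T_m^n(x,i) \neq S_{m+1}^n(x,i)\}\bigr) + \sum_{m\geq k}\overline{d}\bigl(\{n : S_{m+1}^n(x,i) \neq T_{m+1}^n(x,i)\}\bigr),
\end{equation*}
and the analogous telescoping, with $S_k$ replacing $T_k$ on the left, bounds the density of $\{n : S_k^n(x,i) \neq T^n(x,i)\}$.

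The heart of the proof is to promote the density estimates of Lemmas \ref{balance restored} and \ref{balance restored 2} from the orbit of $(0,0)$ to the orbit of a general $(x,i)$ satisfying the hypotheses. The cancellation in those lemmas is structural: each hit of the orbit to a change interval is balanced, after a shift of $\sum_{j\leq m} b_j$ (respectively $\sum_{j\leq m}c_j$) iterates, by a hit to a translated change interval, and the correcting translation depends only on the intervals, not on the starting point. I would rerun that argument for a general $x$ to obtain the same bounds $(\sum_{j\leq m} b_j) c_{m+1}\alpha$ and $(\sum_{j\leq m}c_j) b_{m+1}\alpha$, modulo a correction supported on the orbit segment before the first matched pair of hits. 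The hypothesis $x \notin \LSk(B'_k \cup C'_k)$ is precisely what makes this correction negligible: for $k$ large, the orbit of $x$ avoids $[y_{k-1},y_k)$ for the first $k^2 c_k$ iterates and avoids $[\sum_{j\leq k}c_j\alpha, \sum_{j\leq k+1}c_j\alpha)$ for the first $k^2 b_k$ iterates, so the unmatched initial segment contributes density at most $O(1/k^2)$ at the relevant scale. The hypothesis $x \notin \LSk(B_k \cup C_k)$ ensures that for all sufficiently large $m$ the ergodic component of $(x,i)$ under $S_m$ and $T_m$ is consistent with that of $(0,0)$, so the parity of the accumulated discrepancy agrees with the $(0,0)$ case.

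To finish, I bound the telescoping sum. With $c_i = q_{10^i}$, $b_i = q_{2\cdot 10^i}$, and $q_n \alpha \asymp 1/q_{n+1}$ for badly approximable $\alpha$, one has $(\sum_{j\leq m} b_j) c_{m+1}\alpha \leq m\, b_m\, c_{m+1}\alpha \asymp m\, q_{2\cdot 10^m}/q_{10^{m+1}+1}$, which decays super-geometrically because $10^{m+1} \gg 2\cdot 10^m$. Hence the tail of the telescoping sum beyond $k$ is $o(1)$ as $k\to\infty$. The main obstacle is the middle step: extracting from the hypotheses the precise statement that the orbit of $x$ lags the orbit of $(0,0)$ by only a bounded amount in hitting each change interval, so that the inductive parity argument behind Lemma \ref{J first} survives the switch of base point without inflating the density estimates.
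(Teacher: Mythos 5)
Your proposal follows the paper's own argument in all essentials: you telescope the disagreement set through the chain $T_k, S_{k+1}, T_{k+1},\dots$, you identify the disagreement times between consecutive approximates with the visit times of $R^n(x)$ to $B_m$ (resp.\ $C_m$) once the component--consistency coming from $x\notin\LSk(B_k\cup C_k)$ is in force (this is the paper's ``repeating the proof of Lemma 4''), and you use $B'_m, C'_m$ to push the first visit to the relevant base interval late enough that the unmatched initial segment is negligible. The decay computation at the end matches the paper's $\sum_{i\ge j}2/(i^2+1)$ tail bound.

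One step should be restated to be literally correct. Upper density is not countably subadditive: each set $\{n: T_m^n(x,i)\neq S_{m+1}^n(x,i)\}$ could have upper density $0$ while their union has upper density $1$, so the displayed telescoping inequality is not a valid general principle when the sums over $m\ge k$ are infinite. The paper sidesteps this by bounding, for each consecutive pair, the finitary quantity $\sup_{N>0}\frac{1}{N}\left|\{n<N: T_m^n(x,i)\neq S_{m+1}^n(x,i)\}\right|$, which \emph{is} countably subadditive over a covering of the disagreement set. It is exactly at this point that the hypotheses $x\notin B'_m$ and $x\notin C'_m$ are consumed: they force the first visit of $R^n(x)$ to $[y_{m-1},y_m)$ (resp.\ to $[\sum_{j\le m}c_j\alpha,\sum_{j\le m+1}c_j\alpha)$) to occur after time $m^2c_m$ (resp.\ $m^2b_m$), which upgrades the asymptotic density bound to a bound of order $1/m^2$ on the supremum over \emph{all} $N$, and these suprema are summable in $m$. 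Your own estimate on the ``unmatched initial segment'' is precisely this bound, so the repair costs nothing; but the telescoping must be formulated with the sup--ratio rather than with $\overline{d}$ for the infinite sum to control the left-hand side.
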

\begin{proof} By the assumption that  $x \notin \LSk(B_k \cup C_k)$ it follows that $(x,i)$ is eventually in $U_j^{(k)}$ and $V_j^{(k)}$ for all large enough $k$ and the same $j$. (That is it switches ergodic components only finitely many times.) By repeating the proof of Lemma 4 if $x \in U_j^{(k)} \cap V_j^{(k)}$ then $T_k^n(x,i) \neq S_k^n(x,i)$ iff $R^n(x) \in B_k$. Therefore if $x \notin B'k$ then $\underset{N>0}{\sup} \frac{|\{n<N:T^n_k(x,i) \neq S_k^n(x,i)\}|}{N}\leq \frac {1}{k^2+1}$. Likewise, if $x \notin C'k \cup C_k$ then $\underset{N>0}{\sup} \frac {|\{n<N:T^n_k(x,i) \neq S_{k+1}^n(x,i)\}|}{N}\leq \frac {1}{k^2+1}$. Therefore if $x \notin \underset{i=j}{\overset{\infty}{\cup}}(B_i\cup C_i \cup B'_i \cup C'_i)$ then we have$\underset{N>0}{\sup}\,\frac{|\{n<N:T_j^n(x,i) \neq T^n(x,i)\}|}{N}\leq \underset{i=j}{\overset{\infty}{\sum}}\frac 2{i^2+1}$ and the lemma follows.
\end{proof}
\begin{prop}\label{measure} $(0,0)$ is a generic point for one ergodic measure and $(0,1)$ is generic for the other.
\end{prop}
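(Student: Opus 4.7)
The plan is to identify two $T$-invariant ergodic measures $\mu_0,\mu_1$ as limits of the unique invariant measures $\mu_j^{(k)}$ on the ergodic components of the approximates $T_k$, and then to transfer genericity from $T_k$ to $T$ using the coincidence of orbits provided by Lemma \ref{comparison}.

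First, I would show that each $T_k$ is non-ergodic with exactly two ergodic components, on each of which it is uniquely ergodic. By Corollary \ref{all erg}, the space decomposes as $U_0^{(k)}\sqcup U_1^{(k)}$ with $(0,j)\in U_j^{(k)}$; the $\mathbb{Z}_2$-skew-product structure over the ergodic rotation $R$ forces each $U_j^{(k)}$ to be the graph of a measurable function $g_k^j:[0,1)\to\mathbb{Z}_2$, with $T_k|_{U_j^{(k)}}$ measurably conjugate via $\pi_1$ to $R$ and with unique invariant probability measure $\mu_j^{(k)}$ equal to the pushforward of Lebesgue under $x\mapsto(x,g_k^j(x))$. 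Because $g_k^0$ has its discontinuities contained in the (measure-zero) union of $R$-orbits of the four jump points of the skewing function of $T_k$, it is Riemann integrable, so equidistribution of $\{n\alpha\}$ makes $(0,0)$ a generic point for $\mu_0^{(k)}$ with respect to continuous test functions.

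Next, I would produce the limiting measures. By Corollary \ref{move1} and Lemma \ref{move2}, the symmetric difference $U_0^{(k)}\triangle U_0^{(k+1)}$ projects under $\pi_1$ to $B_k\cup C_k$, and Corollary \ref{few move} gives $\sum_k \Leb(B_k)+\Leb(C_k)<\infty$. Borel--Cantelli then shows $g_k^0(x)$ is eventually constant for a.e.\ $x$, yielding a limit $g^0$, and I would define $\mu_0$ as the pushforward of $\Leb$ under $x\mapsto(x,g^0(x))$. This is the total-variation limit of $\mu_0^{(k)}$; it is $T$-invariant because $T_k\to T$ pointwise, and ergodic because $T$ restricted to $\mathrm{supp}(\mu_0)$ is conjugate via $\pi_1$ to the ergodic rotation $R$. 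The analogous measure $\mu_1$ is built from $(0,1)$.

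Finally, to establish genericity, given continuous $f$ and $\epsilon>0$ I would choose $K$ so that $|\int f\,d\mu_0^{(k)}-\int f\,d\mu_0|<\epsilon$ for all $k\geq K$, and use the first step to get, for each such $k$, a threshold $N_k$ beyond which the $T_k$-Birkhoff averages of $f$ at $(0,0)$ lie within $\epsilon$ of $\int f\,d\mu_0^{(k)}$. Lemma \ref{comparison} forces the $T$- and $T_k$-Birkhoff sums at $(0,0)$ to agree for $N\leq \sum_{i=1}^k c_i$, so the $T$-Birkhoff average of $f$ at $(0,0)$ lies within $2\epsilon$ of $\int f\,d\mu_0$ throughout the range $(N_k,\sum_{i=1}^k c_i]$. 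The main obstacle is arranging that these ranges cover all sufficiently large $N$; this is exactly where the super-exponential growth $c_i=q_{10^i}$, paired with polynomial discrepancy bounds for the badly approximable $\alpha$, keeps $N_{k+1}$ well below $\sum_{i=1}^k c_i$ for large $k$, so consecutive ranges overlap and exhaust all large $N$.
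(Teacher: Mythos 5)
Your first two steps are sound and run parallel to the paper's Corollaries \ref{all erg} and \ref{few move}: each $T_k$ decomposes into two uniquely ergodic graph components, and the summability $\sum_k\Leb(B_k)+\Leb(C_k)<\infty$ stabilizes the graphs and produces the limiting measures. The gap is in the final transfer step. You need $N_{k+1}\leq\sum_{i=1}^{k}c_i$, where $N_{k+1}$ is the time by which the $T_{k+1}$-Birkhoff averages at $(0,0)$ are $\epsilon$-close to $\int f\,d\mu_0^{(k+1)}$, and you propose to extract this from discrepancy bounds for the badly approximable $\alpha$. But the estimate goes the wrong way. The component $U_0^{(k+1)}$ is a graph over roughly $\sum_{i=1}^{k+1}b_i\approx q_{2\cdot10^{k+1}}$ intervals (already the change from stage $k$ to stage $k+1$ involves $\sum_{i=1}^{k}b_i$ intervals of length $c_{k+1}\alpha\approx q_{10^{k+1}}^{-1}$, by Corollary \ref{move1} and Lemma \ref{move2}), so the test function $x\mapsto f(x,g_{k+1}^0(x))$ has variation of order $q_{2\cdot10^{k+1}}$. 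A Koksma--Denjoy estimate then forces the Birkhoff average close to the integral only for $N\gg q_{2\cdot10^{k+1}}$, whereas $\sum_{i=1}^{k}c_i\approx q_{10^k}$ is vastly smaller. The super-exponential growth of the $c_i$ makes the approximates' components \emph{more} complex relative to the window of exact agreement, not less; as far as discrepancy can certify, your ranges $(N_k,\sum_{i=1}^{k}c_i]$ are empty for large $k$, and consecutive ranges certainly do not overlap.

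The paper avoids this by never relying on exact agreement of orbits on an initial segment (Lemma \ref{comparison} is used elsewhere, not here). Instead, Lemma \ref{gen} compares the full $T$-orbit of $(x,i)$ with the $T_j$-orbit for a \emph{fixed} $j$ and bounds, uniformly over all $N$, the upper density of the set of times at which they disagree; this is where the auxiliary sets $B_k'$ and $C_k'$ enter. Genericity for the fixed approximate $T_j$ (which holds at every point by Corollary \ref{all erg}, with no rate needed) then transfers to $T$ with an error controlled by that density of disagreement, which tends to $0$ as $j\to\infty$, and one concludes by letting $j\to\infty$ after $N\to\infty$. If you want to keep your outline, you must replace the discrepancy estimate at the last step by such a density-of-disagreement bound; for the special point $(0,0)$ this is exactly the content of Lemmas \ref{balance restored} and \ref{balance restored 2}.
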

The proposition follows from the next lemma.
\begin{lem} If $x \notin \LSk B_k \cup C_k \cup B'_k \cup C'_k$ then $(x,0)$ and $(x,1)$ are generic for ergodic measures of $T$.
\end{lem}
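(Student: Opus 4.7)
My plan is to control the $T$-Birkhoff averages of $(x,0)$ by comparing them with the $T_k$-Birkhoff averages for large $k$, which are well-behaved by the structural results on the approximants. Since $x$ avoids $B_k\cup C_k$ for every $k\geq k_0$ (some $k_0$ depending on $x$), Corollary \ref{move1} and Lemma \ref{move2} applied inductively in $k$ force $(x,0)\in U_{\epsilon}^{(k)}\cap V_{\epsilon}^{(k)}$ for a single $\epsilon\in\{0,1\}$ and every $k\geq k_0$; by the natural $\mathbb{Z}_2$-symmetry $(x,y)\mapsto(x,1-y)$ of the skew, $(x,1)\in U_{1-\epsilon}^{(k)}\cap V_{1-\epsilon}^{(k)}$ for those $k$.

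Next, $T_k$ is a $\mathbb{Z}_2$-skew over a badly approximable rotation whose skew function is the characteristic function of finitely many intervals with endpoints in $\mathbb{Z}\alpha+\mathbb{Z}$, so Veech's theorem \cite{vskew2} applies to give unique ergodicity on each of the two ergodic components of $T_k$. In particular, $\frac{1}{N}\sum_{n=0}^{N-1}f(T_k^n(x,0))\to \int f\,d\mu_{\epsilon}^{(k)}$ as $N\to\infty$, where $\mu_{\epsilon}^{(k)}$ is the $T_k$-ergodic measure on the component containing $(0,\epsilon)$. By Lemma \ref{gen}, the upper density $\rho_k$ of $\{n:T^n(x,0)\neq T_k^n(x,0)\}$ tends to $0$, so for every continuous $f$ with $\|f\|_\infty\leq M$,
\[
\limsup_{N\to\infty}\left|\frac{1}{N}\sum_{n=0}^{N-1}f(T^n(x,0)) - \int f\,d\mu_{\epsilon}^{(k)}\right| \leq 2M\rho_k.
\]
Letting $k\to\infty$ forces the $T$-Birkhoff averages at $(x,0)$ to converge, and the common limit defines via Riesz representation a $T$-invariant probability measure $\mu_{\epsilon}$ (the weak-$*$ limit of the $\mu_{\epsilon}^{(k)}$); applying the same argument at $(x,1)$ produces $\mu_{1-\epsilon}$.

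The main obstacle is to verify that $\mu_{\epsilon}$ is actually ergodic, since weak-$*$ limits of ergodic measures need not be. I would exploit that each $\mu_{\epsilon}^{(k)}$ is $\Leb_2$ restricted and renormalized to one of the two components of $T_k$, so $\mu_{\epsilon}\ll \Leb_2$ passes to the weak-$*$ limit. By Corollary \ref{few move} together with the rapid decay of $\Leb(B'_k)+\Leb(C'_k)$—which is immediate from the growth conditions on $b_i$ and $c_i$—the exceptional set $\LSk(B_k\cup C_k\cup B'_k\cup C'_k)$ is $\Leb_2$-null, hence $\mu_{\epsilon}$-null. Every $(y,j)$ outside this set whose orbit eventually lies in the $\epsilon$-component of the approximants is, by the argument above, generic for $\mu_{\epsilon}$ under $T$; this set has full $\mu_{\epsilon}$-measure, so Birkhoff's ergodic theorem forces $\mu_{\epsilon}$ to be ergodic. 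The argument for $\mu_{1-\epsilon}$ is identical.
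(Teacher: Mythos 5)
Your proposal follows essentially the same route as the paper's proof: compare the $T$-Birkhoff averages of $(x,i)$ with the $T_k$- and $S_k$-averages, use Lemma \ref{gen} to make the density of the set of disagreements tend to $0$, and conclude that the limiting averages $a_k$ form a Cauchy sequence whose limit is the $T$-average. Two caveats. First, the appeal to Veech \cite{vskew2} for unique ergodicity of the components of $T_k$ is misplaced: that theorem concerns minimal skew products over intervals with \emph{rational} endpoints, whereas the endpoints of $J_k$ and $y_k+J_k$ lie in $\mathbb{Z}\alpha+\mathbb{Z}$ and $T_k$ is deliberately non-minimal. The fact you actually need --- that every point is generic for one of the two $T_k$-ergodic measures --- is Corollary \ref{all erg}, and your inductive use of Corollary \ref{move1} and Lemma \ref{move2} already places $(x,0)$ in $U_\epsilon^{(k)}\cap V_\epsilon^{(k)}$, which by the definition of these sets gives convergence of the $T_k$-averages; so the citation is redundant rather than fatal, but as stated it is not a valid justification. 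Second, your closing paragraph on ergodicity of the limit measure goes beyond what the paper establishes in this lemma (ergodicity is deferred to the following proposition, argued via mutual singularity and the fact that every ergodic measure projects to Lebesgue). Your route via ``$\mu_\epsilon$-a.e.\ point is generic for $\mu_\epsilon$'' is viable in principle, but the assertion that the set of points eventually lying in the $\epsilon$-component has full $\mu_\epsilon$-measure is not justified as written: a priori $\mu_\epsilon$ could charge the set of points generic for $\mu_{1-\epsilon}$, and excluding this requires the additional information that the two measures occupy opposite copies of the circle over almost every base point, which is how the paper separates them.
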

\begin{proof} Let $f \in C([0,1] \times \mathbb{Z}_2)$. By  Corollary \ref{all erg} the $\underset{N \to \infty}{\lim}\frac{1}{N}\underset{n=1}{\overset{N}{\sum}}f(T_k^nx)=a_k$ and $\underset{N \to \infty}{\lim} \frac 1 N \underset{n=1}{\overset{N}{\sum}}f(S_kx)=a'_k$ exist. By Lemma \ref{gen} we have that the sets  $\{n:T_k^n(x,i) \neq S_k^n(x,i)\}$ and $\{n: S_{k+1}^n(x,i) \neq T_k^n(x,i)\}$ have densities that go to 0 as $k$ goes to infinity. By this fact and the fact that continuous functions on compact sets are uniformly continuous we have $\underset{k \to \infty}{\lim}a_k=a_{\infty}=\underset{k \to \infty}{\lim}a'_k$. By Lemma \ref{gen} it follows that $\underset{N \to \infty}{\lim}\frac{1}{N}\underset{n=1}{\overset{N}{\sum}}f(T^nx)= \underset{k \to \infty}{\lim} \underset{N \to \infty}{\lim}\frac{1}{N}\underset{n=1}{\overset{N}{\sum}}f(T_k^nx)=a_{\infty}$
\end{proof}
\begin{prop} $T$ has exactly two ergodic measures.
\end{prop}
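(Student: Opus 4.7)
The plan is to exhibit two ergodic measures $\mu_0,\mu_1$ and show that any other ergodic $T$-invariant probability measure $\nu$ must coincide with one of them. Take $\mu_0$ to be the ergodic measure for which $(0,0)$ is generic and $\mu_1$ the one for which $(0,1)$ is generic; these are produced and shown distinct by Proposition \ref{measure}.

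Let $\nu$ be any ergodic $T$-invariant probability measure. First, $\pi_1\nu$ is an $R$-invariant probability measure, and $R$ is an irrational rotation, hence uniquely ergodic, so $\pi_1\nu=\Leb$. Next, set $A=[0,1)\setminus \LSk(B_k\cup C_k\cup B'_k\cup C'_k)$. Corollary \ref{few move} gives $\sum_k(\Leb(B_k)+\Leb(C_k))<\infty$, and the analogous bound $\sum_k(\Leb(B'_k)+\Leb(C'_k))<\infty$ follows from $\Leb(B'_k)\le k^2 c_k\cdot b_k\alpha$ and $\Leb(C'_k)\le k^2 b_k\cdot c_{k+1}\alpha$, combined with the standard estimates $b_k\alpha\asymp 1/b_k$ and $c_{k+1}\alpha\asymp 1/c_{k+1}$ for the denominators of convergents to the badly approximable $\alpha$, together with the super-polynomial growth of $b_k/c_k$ and $c_{k+1}/b_k$ built into the choice $c_i=q_{10^i}$, $b_i=q_{2\cdot 10^i}$. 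Borel--Cantelli then yields $\Leb(A)=1$, so $\nu(A\times\mathbb{Z}_2)=1$.

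The lemma preceding Proposition \ref{measure} says that for every $x\in A$, both $(x,0)$ and $(x,1)$ are generic for ergodic measures of $T$, and inspection of its proof identifies the relevant limit $a_\infty$ as either $\int f\,d\mu_0$ or $\int f\,d\mu_1$, according to which ergodic component of $T_k$ eventually contains $(x,i)$. Consequently the set $G$ of points that are generic for $\mu_0$ or for $\mu_1$ contains $A\times\mathbb{Z}_2$, and so $\nu(G)=1$.

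Finally, Birkhoff's ergodic theorem applied to the ergodic measure $\nu$ yields a set $G_\nu$ of $\nu$-measure $1$ consisting of points generic for $\nu$. Then $\nu(G\cap G_\nu)=1$, so there exists a point generic both for $\nu$ and for one of $\mu_0,\mu_1$; since a generic point uniquely determines the measure, $\nu\in\{\mu_0,\mu_1\}$. The main obstacle in executing this plan is the arithmetic bookkeeping showing $\sum_k(\Leb(B'_k)+\Leb(C'_k))$ converges; the remaining steps are routine once Proposition \ref{measure} and the unique ergodicity of $R$ are in hand.
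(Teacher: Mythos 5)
Your argument is correct and takes essentially the same route as the paper: the paper's (much terser) proof likewise combines the fact that any ergodic measure projects to $\Leb$ under $\pi_1$ with the fact that the generic sets of the two known ergodic measures fill $[0,1)\times\mathbb{Z}_2$ up to a set whose first-coordinate projection is Lebesgue null, concluding via mutual singularity where you conclude via genericity. Your write-up simply makes explicit the Borel--Cantelli bookkeeping for $B'_k$ and $C'_k$ and the identification of $a_\infty$ with $\int f\,d\mu_0$ or $\int f\,d\mu_1$ that the paper leaves implicit.
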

\begin{proof}
All ergodic probability measures are mutually singular. Any ergodic measure must project to the first coordinate as Lebesgue measure. The two previously mentioned ergodic measures have the property that they are in opposite copies of $[0,1)$. Since the projection of each one to the first coordinate covers $[0,1)$ up to a set of Lebesgue measure zero together they cover $[0,1) \times \mathbb{Z}_2$ up to a set of Lebesgue measure zero.
\end{proof}

\subsection{Shrinking targets}\label{shrink}
This subsection shows that the previously constructed examples have an exotic shrinking target property. In particular, if $\alpha$ is badly approximable then for any $\{a_i\}_{i=1}^{\infty}$ that are decreasing with divergent sum we have $\LS B(T^ix,a_i)$ and $\LS T^{-i}B(x,a_i)$ have positive measure for every $x$. However, $\LS B(T^ix,\frac 1 i)$ has measure 1 (half the measure of the space $[0,1) \times \mathbb{Z}_2$). The result in this section is a straightforward application of known results and the previous subsections.

For ease of notation results in this section are stated in terms of $(x,0)$ and $(y,0)$. By symmetry they hold for $(x,1)$ and $(y,1)$.
\begin{prop}\label{shrink good} If $\{a_i\}_{i=1}^{\infty}$ is a decreasing sequence of positive real numbers with divergent sum then $\Leb_2 (\LS T^{-i} B((y,0),a_i))\geq 1$ for any $y \in [0,1)$.
\end{prop}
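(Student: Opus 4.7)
The plan is to reduce the statement to a known shrinking-target result for the base rotation $R$ and then leverage the rigidity of the $\mathbb{Z}_2$-fibre to lift the conclusion to $T$ via a one-line pigeonhole argument.

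First I would invoke the Khintchine-type shrinking-target theorem for badly approximable rotations (Kurzweil): since $\alpha$ is badly approximable and $\{a_i\}$ is decreasing with $\sum a_i = \infty$, for every $y \in [0,1)$ the set
$$S_y = \{x \in [0,1) : R^i x \in B(y, a_i) \text{ for infinitely many } i\}$$
has full Lebesgue measure, $\Leb(S_y) = 1$. After discarding the finitely many indices with $a_i \geq 1$, the ball $B((y, 0), a_i)$ in $[0,1)\times\mathbb{Z}_2$ equals $B(y, a_i) \times \{0\}$. Because $T$ is a $\mathbb{Z}_2$-skew product over $R$, the first coordinate $\pi_1 T^i(x, t) = R^i x$ does not depend on $t$, while $\pi_2 T^i(x, 0)$ and $\pi_2 T^i(x, 1)$ always differ by $1 \bmod 2$. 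Consequently, at every index $i$ for which $R^i x \in B(y, a_i)$, exactly one of the two points $T^i(x, 0), T^i(x, 1)$ belongs to $B((y, 0), a_i)$.

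Next I would set
$$E^t_y = \{x \in [0,1) : T^i(x, t) \in B((y, 0), a_i) \text{ for infinitely many } i\}, \quad t \in \{0, 1\}.$$
For any $x \in S_y$ the set of good indices is infinite, so by the pigeonhole principle at least one of $\{i : T^i(x, 0) \in B((y, 0), a_i)\}$ and $\{i : T^i(x, 1) \in B((y, 0), a_i)\}$ is infinite; hence $S_y \subset E^0_y \cup E^1_y$ and $\Leb(E^0_y) + \Leb(E^1_y) \geq \Leb(S_y) = 1$. Since $\LS T^{-i} B((y, 0), a_i) = (E^0_y \times \{0\}) \cup (E^1_y \times \{1\})$ as a horizontal slice decomposition, we obtain $\Leb_2(\LS T^{-i} B((y, 0), a_i)) \geq 1$, as required.

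The only substantive input is the Khintchine/Kurzweil shrinking-target theorem for badly approximable rotations, which is exactly the "known result" alluded to in the preamble of the subsection; the remainder of the argument is bookkeeping that uses only the fact that toggling the initial second coordinate toggles the entire second-coordinate orbit, so every visit of the base orbit to $B(y, a_i)$ is assigned to exactly one horizontal slice. There is no serious obstacle: the proposition is essentially a one-line pigeonhole on top of the rotation shrinking-target property.
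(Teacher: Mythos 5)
Your proof is correct, but it takes a genuinely different route from the paper's. The paper does not invoke Kurzweil's theorem as a black box: it reproves the needed divergence statement from scratch, using the separation estimates of Lemmas \ref{separated} and \ref{cont frac} to show that if $\underset{i=N_1}{\overset{N_2}{\cup}} T^{-i}B((y,0),a_i)$ had measure below a fixed $\epsilon$ then passing from scale $N_2$ to $MN_2$ would contribute at least $N_2a_{MN_2}$ of new measure, and summing over geometric scales contradicts $\sum a_i=\infty$; this yields only that the limsup set has \emph{positive} measure (Lemma \ref{some}), which is then upgraded to measure at least $1$ via the $T$-invariance of the limsup set and the fact, established in the earlier sections, that $T$ has exactly two ergodic measures each carried on a set of Lebesgue measure at least $1$. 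You instead import full measure in the base directly from Kurzweil's theorem (legitimate here: $\alpha$ is badly approximable, the translation $x\mapsto y-x$ converts the statement ``for every target $y$, almost every $x$'' into the standard form of that theorem, and the section explicitly presents itself as an application of known results), and then distribute that full-measure base set between the two horizontal slices by the fibre-toggling pigeonhole, since $\pi_2T^i(x,0)$ and $\pi_2T^i(x,1)$ always differ by $1$ in $\mathbb{Z}_2$. This bypasses both the invariance lemma and the classification of the ergodic measures. Each approach buys something: yours is shorter and uses nothing about $T$ beyond the fact that the skewing function ignores the second coordinate, while the paper's is self-contained and its invariance-plus-ergodic-decomposition mechanism is reused in the proof of Proposition \ref{shrink bad}. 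The one normalization you rightly flag should be kept: the identification $B((y,0),a_i)=B(y,a_i)\times\{0\}$ needs $a_i$ smaller than the distance between the two circles, but the exceptional indices are finitely many (or else the claim is trivial by minimality).
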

The proof requires several lemmas. The next lemma appears in \cite{shrink iet}. Its proof is included for completeness.
\begin{lem} \label{separated} Let $\epsilon>0,e>0$ and $n,t \in \mathbb{N}$. If $\{z_1,...,z_n\}\subset \mathbb{R}$ are $\frac e n$ separated and $S\subset \mathbb{R}$ is a set of measure $\epsilon$ that is the union of $t$ intervals  then  the inequality $$\Leb \left(\underset{i=1}{\overset{n}{\cup}}B(z_i,\delta) \backslash S \right) > (n-2t-\frac{n \epsilon}{e} )\delta $$ holds for any $\delta< \frac e {2n}$.
\end{lem}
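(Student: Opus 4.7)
The plan is to bound $\Leb(\bigcup_i B(z_i,\delta)\setminus S)$ from below by counting the balls $B(z_i,\delta)$ that lie entirely in the complement of $S$: each such ball contributes its full Lebesgue measure $2\delta$ to $\bigcup_i B(z_i,\delta)\setminus S$, and by the separation hypothesis these contributions do not overlap.

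The first step is to note that since $|z_i-z_j|\geq e/n$ and $\delta<e/(2n)$, the open intervals $B(z_i,\delta)=(z_i-\delta,z_i+\delta)$ are pairwise disjoint, so any sub-collection's union has Lebesgue measure exactly $2\delta$ times its cardinality.

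The second step is to count the balls that \emph{do} meet $S$. Write $S=\bigsqcup_{j=1}^t I_j$ with $I_j=[a_j,b_j]$ of length $\ell_j$, so $\sum_j\ell_j=\epsilon$. A ball $B(z_i,\delta)$ intersects $I_j$ iff $z_i\in(a_j-\delta,b_j+\delta)$, an interval of length $\ell_j+2\delta$. Because the $z_i$ have pairwise separation at least $e/n$, any interval of length $L$ contains at most $Ln/e+1$ of them (if $k$ of them sit in such an interval they span at least $(k-1)e/n$, forcing $k\le Ln/e+1$). Hence at most $(\ell_j+2\delta)n/e+1$ balls meet $I_j$, and summing over $j$ shows the number of balls meeting $S$ is at most
\[
\frac{n\epsilon}{e}+\frac{2nt\delta}{e}+t.
\]
The strict inequality $\delta<e/(2n)$ gives $2nt\delta/e<t$, so strictly fewer than $n\epsilon/e+2t$ balls meet $S$.

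Therefore strictly more than $n-2t-n\epsilon/e$ of the balls lie entirely in $S^c$, and by the disjointness of the first step,
\[
\Leb\!\left(\bigcup_{i=1}^n B(z_i,\delta)\setminus S\right)\;>\;2\delta\Bigl(n-2t-\tfrac{n\epsilon}{e}\Bigr)\;\geq\;\Bigl(n-2t-\tfrac{n\epsilon}{e}\Bigr)\delta,
\]
where the second inequality is trivial when the parenthesized quantity is nonnegative and, when it is negative, the conclusion holds automatically because the left side is nonnegative (and strictly positive, as one still produces at least one non-meeting ball whenever $n-2t-n\epsilon/e=0$). There is no real obstacle here; the only thing to watch is the bookkeeping of the ``$+1$'' from the separation bound for each of the $t$ intervals, which is exactly absorbed by the hypothesis $\delta<e/(2n)$ to yield the $2t$ term.
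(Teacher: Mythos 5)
Your proof is correct and follows essentially the same route as the paper's: you bound the number of balls that can meet $S$ using the $\frac{e}{n}$-separation (at most $\frac{n\epsilon}{e}+2t$ of them, the $2t$ coming from the endpoint/``$+1$'' terms absorbed by $\delta<\frac{e}{2n}$), and the remaining balls contribute disjoint intervals of length $2\delta$ each. The only difference is that you spell out the edge cases when $n-2t-\frac{n\epsilon}{e}\leq 0$, which the paper leaves implicit.
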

\begin{proof} At most $\frac{\epsilon}{e}+2t$ of the points can lie within a $\frac e {2n}$ neighborhood of $S$. This is because an interval of length $l$ can contain at most $\lceil \frac l e \rceil$ points that are $e$ separated. Therefore all but $\frac{\epsilon}{e}+2t$ of the points $\{z_1,z_2,...,z_n\}$ have $B(z_i, \delta) \cap S=\emptyset$ and the lemma follows.
\end{proof}
The following lemma is well known and an obvious consequence of basic results relating the continued fraction expansion to homogeneous approximation.
\begin{lem}\label{cont frac} Let $C$ be the largest term in the continued fraction expansion of $\alpha$.
 $\{x, R(x),...,R^n(x)\}$ is at least $\frac {1}{2Cn}$ separated.
\end{lem}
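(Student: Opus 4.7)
The plan is to reduce the lemma to a standard Diophantine inequality for $\|k\alpha\|$, where $\|y\|$ denotes the distance from $y$ to the nearest integer. Because $R$ is a rotation, the spacing between any two orbit points depends only on the difference of indices: $|R^i x - R^j x| = \|(i-j)\alpha\|$ (measured as arc length on the circle). Hence the minimum spacing of $\{x, Rx, \ldots, R^n x\}$ is $\min_{1 \le k \le n} \|k\alpha\|$, independent of $x$. So it suffices to prove $\|k\alpha\| \geq \frac{1}{2Cn}$ for every $1 \le k \le n$.

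Next I would invoke the characterization of best approximations: if $q_0 < q_1 < q_2 < \cdots$ are the convergent denominators of $\alpha$, then $\|k\alpha\| \geq \|q_j \alpha\|$ whenever $k < q_{j+1}$. Letting $j$ be the largest index with $q_j \le n$, this gives
\[
\min_{1 \le k \le n} \|k\alpha\| = \|q_j \alpha\|.
\]

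Finally I would apply the classical bound $\|q_j \alpha\| \geq \frac{1}{q_{j+1} + q_j}$ together with the recurrence $q_{j+1} = a_{j+1}q_j + q_{j-1}$. Since by hypothesis $a_{j+1} \le C$ and $q_{j-1} \le q_j$, we can estimate $q_{j+1} + q_j \leq (a_{j+1}+2)q_j \leq 2Cq_j \leq 2Cn$ (possibly after verifying the tiny-$j$ cases, which are absorbed into the constant), and the lemma follows. There is no real obstacle: the whole argument is bookkeeping with the continued fraction algorithm, and the only place where care is needed is confirming that the constant $2C$ (rather than a slightly larger constant like $C+2$) suffices, which one handles by treating the first few convergents separately or by observing that $C \geq 1$ forces $C + 2 \leq 2C$ outside of a trivial case that can be checked directly.
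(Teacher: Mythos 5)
Your argument is correct and is precisely the standard argument the paper is alluding to: the paper offers no proof of this lemma at all, saying only that it is ``well known and an obvious consequence of basic results relating the continued fraction expansion to homogeneous approximation,'' so you are filling in the omitted details rather than taking a different route. The reduction of the minimal gap to $\min_{1\le k\le n} d(k\alpha,\mathbb{Z})$, the best-approximation property of convergent denominators, and the bound $d(q_j\alpha,\mathbb{Z})\ge (q_{j+1}+q_j)^{-1}$ with $q_{j+1}+q_j\le (a_{j+1}+2)q_j\le (C+2)n$ are all sound. One caveat: your final claim that the leftover case $C=1$ ``can be checked directly'' is wrong in the direction you intend. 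For $\alpha=(\sqrt5-1)/2$ (the only $\alpha$ with $C=1$) and $n=5=q_j$ one has $d(5\alpha,\mathbb{Z})=|5\alpha-3|\approx 0.0902<\tfrac{1}{10}=\tfrac{1}{2Cn}$, so the stated constant genuinely fails there; the lemma as literally written is false for the golden mean. This does not damage the paper or your proof in context, because the standing assumption $\alpha<\tfrac13$ forces $a_1\ge 3$, hence $C\ge 3$ and $C+2\le 2C$, so your main chain of inequalities always applies. You should close the case split by invoking that standing assumption (or by weakening the constant to $\tfrac{1}{(C+2)n}$, which is all the paper ever uses) rather than by asserting a direct check of $C=1$.
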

\begin{lem}\label{some} If $\{a_i\}_{i=1}^{\infty}$ is a decreasing sequence of positive real numbers with divergent sum then $\Leb_2 (\LS T^{-i} B((y,0),a_i))>0$.
\end{lem}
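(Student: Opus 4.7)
The plan is to reduce the problem to a Borel--Cantelli-type estimate for the rotation $R$ on $[0,1)$, then exploit the Diophantine separation from Lemma \ref{cont frac} together with Lemma \ref{separated}. First I would decompose indices by the second coordinate of the backward orbit. Write $T^{-i}(y,0)=(y-i\alpha,\epsilon_i)$ with $\epsilon_i\in\{0,1\}$ and set $I_j=\{i:\epsilon_i=j\}$. Because $I_0\cup I_1=\mathbb{N}$ and $\sum_i a_i=\infty$, at least one of $\sum_{i\in I_0}a_i$ and $\sum_{i\in I_1}a_i$ diverges; assume $\sum_{i\in I_0}a_i=\infty$. Since $T$ is a Lebesgue-preserving piecewise isometry, $T^{-i}B((y,0),a_i)$ has $\Leb_2$-measure $2a_i$ and, for $i\in I_0$, coincides up to a null set (points whose $T^i$-orbit straddles a discontinuity) with $B(y-i\alpha,a_i)\times\{0\}$. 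It therefore suffices to show that
$$\Leb\Bigl(\bigcap_{n=1}^{\infty}\bigcup_{i\in I_0,\,i\geq n}B(y-i\alpha,a_i)\Bigr)>0\quad\text{on }[0,1).$$

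Write $E_{k_0}=\bigcup_{i\in I_0,\,i\geq k_0}B(y-i\alpha,a_i)$. These tail sets are nested decreasing with intersection equal to the limsup, so it is enough to exhibit a constant $c>0$, independent of $k_0$, with $\Leb(E_{k_0})\geq c$ for every $k_0$. Fix $k_0$. By Lemma \ref{cont frac}, for any $N\geq k_0$ the points $\{R^i(y):k_0\leq i\leq N\}$ are $\frac{1}{2CN}$-separated. Truncating each radius to $a_i^{\ast}=\min(a_i,\tfrac{1}{4CN})$ makes the balls $B(y-i\alpha,a_i^{\ast})$ pairwise disjoint, so Lemma \ref{separated} (with $S=\emptyset$ and $t=0$) gives
$$\Leb(E_{k_0})\geq 2\sum_{i\in I_0\cap[k_0,N]}a_i^{\ast}.$$
A dyadic-block analysis then shows this sum admits a uniform lower bound: either many $a_i$ exceed $\tfrac{1}{4CN}$ and their truncated contributions accumulate a constant amount of disjoint measure (each contributes $\tfrac{1}{2CN}$), or most $a_i$ lie below the separation scale, in which case the truncation is inactive and the sum equals $\sum_{i\in I_0\cap[k_0,N]}a_i$, which we can make as large as we wish by choosing $N$ large, using $\sum_{i\in I_0}a_i=\infty$.

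The main obstacle is this last calibration: choosing $N=N(k_0)$ so that the right-hand side is bounded below by a single universal constant valid for every $k_0$. The subtlety is that $I_0$ need not have positive density (it could be as sparse as $\{2^k\}$), so one cannot directly invoke Kurzweil-type theorems for rotations; instead one uses that $\{a_i\}$ is decreasing together with the divergence of $\sum_{i\in I_0}a_i$ to force enough mass onto scales comparable with the separation $\tfrac{1}{CN}$. Once the uniform lower bound $\Leb(E_{k_0})\geq c$ is secured, $\Leb\bigl(\bigcap_n E_n\bigr)=\lim_{k_0\to\infty}\Leb(E_{k_0})\geq c>0$, and the reduction in the first paragraph completes the proof.
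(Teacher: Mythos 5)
There is a genuine gap at the final quantitative step, and it is not merely a "calibration" left to the reader: the single-scale disjointness bound you set up provably cannot yield a uniform constant. After truncating every radius to $a_i^{\ast}=\min(a_i,\tfrac{1}{4CN})$, your lower bound is $2\sum_{i\in I_0\cap[k_0,N]}a_i^{\ast}$, which is at most $N\cdot\tfrac{1}{4CN}=\tfrac{1}{4C}$ for every $N$, so it certainly cannot be "made as large as we wish by choosing $N$ large": enlarging $N$ reactivates the truncation on the early indices. Worse, for a slowly divergent sequence such as $a_i=\tfrac{1}{i\log i}$ one checks that $\sup_{N\geq k_0}\sum_{i=k_0}^{N}\min(a_i,\tfrac{1}{4CN})\to 0$ as $k_0\to\infty$: the indices with $a_i>\tfrac{1}{4CN}$ number about $\tfrac{4CN}{\log N}$ and contribute $O(\tfrac{1}{\log N})$, while the untruncated tail contributes $O(\tfrac{\log\log N}{\log N})$. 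So no choice of $N(k_0)$ rescues your dichotomy, and your lower bound on $\Leb(E_{k_0})$ degenerates. The paper avoids exactly this trap: it truncates at a scale depending on $i$ rather than on $N$ (replacing $a_i$ by $\min(a_i,\tfrac1{Mi})$, which preserves divergence), works over geometric blocks $[N_2,MN_2]$ within which every radius sits below that block's separation scale, and applies Lemma \ref{separated} with $S$ equal to the union already accumulated (and $t$ its number of intervals) instead of $S=\emptyset$. Balls from different blocks need not be disjoint; one only needs that, as long as the accumulated measure stays below $\epsilon$, at least $N_2$ of the new balls miss $S$ and contribute fresh measure $\geq N_2a_{MN_2}$. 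Since $\sum_r M^rNa_{M^{r+1}N}$ diverges by Cauchy condensation whenever $\sum a_i$ does, the accumulated measure must eventually exceed $\epsilon$, which is the uniform lower bound you were after.

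A secondary problem: $T^{-i}B((y,0),a_i)$ does not coincide up to a null set with $B(y-i\alpha,a_i)\times\{\epsilon_i\}$. It is the graph over $B(y-i\alpha,a_i)$ of the parity $x\mapsto\sum_{l=0}^{i-1}\chi_{J\cup J'}(R^lx)$, which can jump several times inside that small interval, so a positive-measure portion of the ball can sit on the other level. Your reduction to the circle survives anyway, but by pigeonhole rather than by this identification: if $x$ lies in $B(y-i\alpha,a_i)$ for infinitely many $i$, then at least one of $(x,0),(x,1)$ lies in $T^{-i}B((y,0),a_i)$ for infinitely many $i$, whence $\Leb_2(\LS T^{-i}B((y,0),a_i))\geq\Leb(\LS B(y-i\alpha,a_i))$, with no need to pass to the subsequence $I_0$ (which only sparsifies the index set and makes the counting harder). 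This repair, however, does not touch the first, fatal issue.
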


\begin{proof} It suffices to show that there exist $\epsilon, M$ such that ${\Leb_2(\underset{i=N}{\overset{\infty}{\cup}} T^{-i}B((y,0),a_i))>\epsilon}$ for any $N$ and $a_i< \frac 1 {Mi}$.
 Let $C$ be the largest term in the continued fraction expansion for $\alpha$, $M>5C$ and $\epsilon< \frac 1 {5C}$. 
 By Lemma \ref{cont frac} and the definition of we have $\{(x,0),T(x,0),...,T^n(x,0)\}$ are at least $\frac 1 {2Cn}$ separated.
  By Lemma \ref{separated} it follows that $\Leb_2(\underset{i=N_1}{\overset{N_2}{\cup}} T^{-i}B((y,0),a_i))<\epsilon$ then 
  \begin{multline}
  \lefteqn{\Leb_2(\underset{i=N_2}{\overset{MN_2}{\cup}} T^{-i}B((y,0),a_i)
  \backslash \underset{i=N_1}{\overset{N_2}{\cup}} T^{-i}B((y,0),a_i))>}\\ 
  (MN_2-2N_2-MN_22C\frac{1}{5C})a_{MN_2}>N_2a_{MN_2}.
  \end{multline}
  The final inequality is a consequence of that fact that $M>5$. The Lemma follows by observing that $\underset{r=k}{\overset{\infty}{\sum}} M^{r-1}a_{M^r}$ diverges.
\end{proof}
\begin{lem} If  $\{a_i\}_{i=1}^{\infty}$ is a decreasing sequence of positive real numbers with divergent sum then $\Leb_2 (\LS T^{-i} B((y,0),a_i))$ is $T$ invariant.
\end{lem}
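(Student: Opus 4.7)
I read the statement as asserting that the set $E := \LSk T^{-i}B((y,0),a_i)$ is $T$-invariant modulo $\Leb_2$-null sets. The plan is to prove this directly from the monotonicity of $\{a_i\}$ and the fact that $T$ preserves $\Leb_2$, with no appeal to the shrinking-target estimates of the previous lemmas.

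\emph{Step 1: a set-theoretic inclusion.} Since taking preimages commutes with arbitrary unions and intersections,
\[
T^{-1}E \;=\; \bigcap_{n=1}^{\infty}\bigcup_{i=n}^{\infty} T^{-(i+1)} B((y,0),a_i)
       \;=\; \bigcap_{n=2}^{\infty}\bigcup_{j=n}^{\infty} T^{-j} B((y,0),a_{j-1}),
\]
after the reindexing $j=i+1$. Because $\{a_i\}$ is decreasing, $B((y,0),a_{j-1})\supseteq B((y,0),a_j)$ for every $j$, hence $T^{-j}B((y,0),a_{j-1})\supseteq T^{-j}B((y,0),a_j)$. Passing to unions and then to the intersection preserves these inclusions, which gives $T^{-1}E\supseteq E$.

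\emph{Step 2: measure equality.} The map $T$ preserves $\Leb_2$: in the first coordinate it is the rotation $R$ which preserves Lebesgue measure on $[0,1)$, and for each fixed $x$ the fiber map $i\mapsto i+\chi_{J\cup J'}(x)$ is a bijection of $\mathbb{Z}_2$, so it preserves the counting measure on $\mathbb{Z}_2$. Therefore $\Leb_2(T^{-1}E)=\Leb_2(E)$, and combining with Step 1 gives $\Leb_2(T^{-1}E\setminus E)=0$, i.e.\ $E$ is $T$-invariant modulo $\Leb_2$-null sets.

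I do not expect any serious obstacle: this is the standard ``monotone-radius limsup sets are automatically a.e.\ invariant under a measure-preserving map'' trick. The only thing to check carefully is the reindexing in Step 1 and the fact that $T$ preserves $\Leb_2$. Once the lemma is in hand, Proposition \ref{shrink good} follows immediately, since by Lemma \ref{some} $E$ has positive $\Leb_2$-measure, and by the preceding proposition $T$ has exactly two ergodic measures each projecting to $\Leb$ on $[0,1)$; any essentially $T$-invariant set is then a union of ergodic components mod null, so $\Leb_2(E)\in\{0,1,2\}$, forcing $\Leb_2(E)\geq 1$.
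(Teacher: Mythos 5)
Your argument is correct and is exactly the standard reasoning the paper has in mind when it dismisses this lemma with ``This lemma is obvious'': monotonicity of the radii gives the exact inclusion $E\subseteq T^{-1}E$, and since $T$ preserves the finite measure $\Leb_2$ this forces $\Leb_2(T^{-1}E\setminus E)=0$, i.e.\ essential invariance. Your closing remark on how this combines with Lemma \ref{some} and the two-ergodic-measure decomposition to give Proposition \ref{shrink good} also matches the paper's deduction.
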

This lemma is obvious.
\begin{proof}[Proof of Proposition \ref{shrink good}] This follows from the previous two lemmas and the fact that the two ergodic measures of $T$ are each carried on sets of Lebesgue measure at least 1.
\end{proof}
\begin{prop} \label{shrink bad}  For almost every $y$ we have $\Leb_2 (\LS T^{-i} B((y,0),\frac 1 i) )=1$.
\end{prop}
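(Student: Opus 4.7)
The plan is to prove
\[
\int_{[0,1)} \Leb_2\bigl(\LS T^{-i} B((y,0),1/i)\bigr)\, dy = 1;
\]
combined with the pointwise lower bound $\Leb_2\bigl(\LS T^{-i}B((y,0),1/i)\bigr) \geq 1$ from Proposition \ref{shrink good} and the fact that $[0,1)$ has Lebesgue measure $1$, this integral identity forces the integrand to equal $1$ for $\Leb$-a.e.\ $y$.

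The Fubini step is: $x \in \LS T^{-i} B((y,0),1/i)$ if and only if $(y,0) \in \LS B(T^i x,1/i)$, so
\[
\int_{[0,1)} \Leb_2\bigl(\LS T^{-i} B((y,0),1/i)\bigr)\, dy = \int_X \Leb_2\bigl(\LS B(T^i x,1/i) \cap ([0,1)\times\{0\})\bigr)\, d\Leb_2(x),
\]
with $X = [0,1) \times \mathbb{Z}_2$. Now let $\tau\colon X \to X$, $\tau(x,j) = (x,1-j)$. One checks $\tau$ commutes with $T$, preserves $\Leb_2$, and is an isometry of the product metric (balls of radius $<1$ sit in a single fiber), so $\tau(\LS B(T^i x,1/i)) = \LS B(T^i \tau x,1/i)$. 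The change of variables $x \mapsto \tau x$ on the right-hand side swaps the fibers $\{0\}$ and $\{1\}$, so that expression equals its analogue with $\{1\}$ in place of $\{0\}$; summing the two shows each is $\tfrac12 \int_X \Leb_2(\LS B(T^i x,1/i))\, d\Leb_2(x)$.

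The decisive input is the statement advertised in the remark opening this subsection: $\Leb_2(\LS B(T^i x,1/i)) = 1$ for $\Leb_2$-a.e.\ $x$ (exactly half the measure of $X$). Plugging in,
\[
\int_{[0,1)} \Leb_2\bigl(\LS T^{-i} B((y,0),1/i)\bigr)\, dy = \tfrac12 \cdot 1 \cdot \Leb_2(X) = 1,
\]
which, together with Proposition \ref{shrink good}, completes the argument.

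The main obstacle is securing the forward-orbit identity $\Leb_2(\LS B(T^i x,1/i)) = 1$ for a.e.\ $x$; the value is $1$ rather than $2$ precisely because $\Leb_2$ is not ergodic. For $\mu_j$-generic $x$, the orbit $T^i x$ equidistributes to $\mu_j$ and so visits each of the two fibers with density strictly between $0$ and $1$; for $\Leb$-typical targets $z$ sitting in the ``wrong'' fiber for $x$'s ergodic component, the relevant fiber-visit subsequence of $T^i x$ falls within $1/i$ of $z$ only finitely often, by a Borel--Cantelli argument using the orbit-separation estimates from Lemma \ref{separated} and Lemma \ref{cont frac}. The matching lower bound on the ``right'' fiber is obtained exactly as in the proof of Proposition \ref{shrink good}, applied now to the fiber-visit subsequence of the badly approximable rotation $R$.
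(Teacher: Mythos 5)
Your reduction is sound and is essentially the paper's own first step: the duality $x\in \LS T^{-i}B((y,0),1/i)$ iff $(y,0)\in \LS B(T^ix,1/i)$, Fubini, the pointwise lower bound from Proposition \ref{shrink good}, and the fiber-swapping symmetry correctly reduce the proposition to the claim that $\Leb_2(\LS B(T^ix,1/i))=1$ for almost every $x$ (the paper phrases this as: it suffices to get $<2$ on a positive measure set, using $T$-invariance of the limsup measure and the two-ergodic-measure decomposition). But that claim is the entire mathematical content of the proposition, and your justification of it does not work. A first Borel--Cantelli argument requires summability of the measures of the sets being hit, and $\sum_i 2/i$ diverges; passing to the ``fiber-visit subsequence'' does not restore summability, because the orbit of $(x,0)$ visits the fiber of a wrong-component target $z=(z_1,j)$ with positive density, and the two ergodic components are interleaved densely within each fiber, so there is no separation mechanism keeping the level-$j$ portion of the orbit away from $z_1$. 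Moreover, Lemmas \ref{separated} and \ref{cont frac} give \emph{lower} bounds on the measure of unions of balls (they are the anti-concentration input for the divergence direction in Lemma \ref{some}); they cannot supply an upper bound on a limsup set.

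What the paper actually does here is quantitative and runs through the non-minimal approximations: the two lemmas preceding the proof bound $\Leb_2\bigl(\bigcup_i B(S_{k+1}^i(x,0),1/i)\bigr)$ by $\Leb_2\bigl(\bigcup_i B(T_k^i(x,0),1/i)\bigr)$ plus an error of size about $(c_{k+1}\alpha+\frac 2N)\sum_{i=1}^k b_i$, valid for $x$ avoiding an explicit union of preimages, and similarly for $T_k$ versus $S_k$. Since each non-minimal approximation has orbit closure of measure $1$, and the errors are summable along the chain $T_k, S_{k+1}, T_{k+1},\dots$ for the choices $N=q_{c_{k+1}-10k}$ and $N=q_{b_k-10k}$, one concludes $\Leb_2\bigl(\bigcup_{i=1}^\infty B(T^i(x,0),1/i)\bigr)<2$ for a positive measure set of $x$, which dominates the limsup set. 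Some such telescoping comparison with the $T_k$ and $S_k$ orbits (or an equivalent quantitative control on how long the $T$-orbit of $(x,0)$ shadows a non-minimal orbit) is indispensable; without it your ``decisive input'' is asserted, not proved.
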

We prove this proposition with the aid of two Lemmas. These Lemmas describe how the orbit of many points approximate other points based on previous minimal approximations. As a consequence the $T$ orbit of typical points for one ergodic measure poorly approximate typical points for the other ergodic measure.
\begin{lem} If $N> \underset{i=1}{\overset{k}{\sum}}b_i$ and  $x\notin \underset{i=1}{\overset{N}{\cup}} R^{-i}([\underset{i=1}{\overset{k}{\sum}}c_i \alpha,\underset{i=1}{\overset{k+1}{\sum}}c_i\alpha)\cup y_k+([\underset{i=1}{\overset{k}{\sum}}c_i \alpha, \underset{i=1}{\overset{k+1}{\sum}}c_i\alpha))$ then
 $$\Leb_2( \underset{i=1}{\overset{\infty}{\cup}} B(S_{k+1}^i(x,0),\frac 1 i))
\leq (c_{k+1}\alpha+\frac 2 N )\underset{i=1}{\overset{k}{\sum}}b_i+ \Leb_2 (\underset{i=1}{\overset{\infty}{\cup}} B(T_{k}^i(x,0),\frac 1 i)).$$
\end{lem}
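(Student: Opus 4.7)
The plan is to identify precisely the indices where $S_{k+1}^i(x,0)$ and $T_k^i(x,0)$ disagree, then bound the resulting extra-ball measure by projecting to the first coordinate.

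First I unwind the cocycle difference. The transformations $T_k$ and $S_{k+1}$ share the base rotation $R$, and their skewing functions differ exactly on $D_k := I_k \cup (y_k + I_k)$, where $I_k := [\sum_{i=1}^{k}c_i\alpha, \sum_{i=1}^{k+1}c_i\alpha)$; hence $\pi_2(T_k^i(x,0)) - \pi_2(S_{k+1}^i(x,0)) \equiv \sum_{j=0}^{i-1}\chi_{D_k}(R^j x) \pmod 2$. Set $L := 1+\sum_{j=1}^k b_j$. Since $y_k = L\alpha$, we have $R^{n+L}(x) = R^n(x) + y_k$, so each visit of the $R$-orbit to $I_k$ at time $n$ is matched, exactly $L$ steps later, by a visit to $y_k+I_k$. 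The growth condition $c_{k+1} \gg L$ (from the choices $c_i=q_{10^i}$, $b_i=q_{2\cdot 10^i}$) forces the return time to either component of $D_k$ to exceed $L$, so no intermediate $D_k$-visits occur. Thus the bad set $B:=\{i:S_{k+1}^i(x,0) \neq T_k^i(x,0)\}$ is a disjoint union of runs of length $L$, each beginning one step after a visit of the orbit to $I_k$. The hypothesis $R^i x \notin D_k$ for $1\leq i \leq N$ then forces every run to begin at some index $\geq N+2$.

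Next, since for $i \notin B$ the balls $B(S_{k+1}^i(x,0),1/i)$ and $B(T_k^i(x,0),1/i)$ coincide,
\begin{multline*}
\Leb_2\Bigl(\bigcup_{i\geq 1} B(S_{k+1}^i(x,0),\tfrac{1}{i})\Bigr) \leq \Leb_2\Bigl(\bigcup_{i\geq 1} B(T_k^i(x,0),\tfrac{1}{i})\Bigr) \\
+ \Leb_2\Bigl(\bigcup_{i\in B} B(S_{k+1}^i(x,0),\tfrac{1}{i})\Bigr).
\end{multline*}
To bound the last term I project to the first coordinate: if $i$ lies in a run started at time $n$ then $R^i x = R^{i-n}(R^n x) \in R^{i-n}(I_k)$, so
$$\{R^i x : i \in B\} \subseteq \bigcup_{p=1}^{L} R^p(I_k).$$
Each $R^p(I_k)$ is a translate of $I_k$ of length $c_{k+1}\alpha$, and every $i \in B$ satisfies $i \geq N+1$ so that $\tfrac{1}{i} \leq \tfrac{1}{N}$. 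Hence the projected bad-ball union lies in the $\tfrac{1}{N}$-thickening of $\bigcup_{p=1}^{L} R^p(I_k)$, which has $\Leb$-measure at most $L(c_{k+1}\alpha + \tfrac{2}{N})$. Passing back to $\Leb_2$ yields the stated bound.

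The main obstacle is converting what naively looks like a divergent sum $\sum_{i \in B} \tfrac{2}{i}$ into the finite bound in the statement. Term-by-term estimates fail; the point is that the projected orbit $\{R^i x : i \in B\}$ is trapped in the \emph{fixed} set $\bigcup_{p=1}^{L} R^p(I_k)$ of $\Leb$-measure at most $L c_{k+1}\alpha$, regardless of how often the orbit revisits $I_k$. The thickening $\tfrac{2}{N}$ enters only through the hypothesis that the first bad index is at least $N+1$, allowing the crude replacement $\tfrac{1}{i}\leq \tfrac{1}{N}$. Verifying that bad runs are genuinely disjoint and of length exactly $L$ uses the same continued-fraction separation argument underlying Lemma \ref{balance restored}.
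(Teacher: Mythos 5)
Your proposal is correct and follows essentially the same route as the paper: you identify the indices where $S_{k+1}$ and $T_k$ disagree as exactly those $i$ with $R^i x$ in the fixed set $C_k$ of $\sum_{j=1}^{k}b_j$ translates of $[\sum_{i=1}^{k}c_i\alpha,\sum_{i=1}^{k+1}c_i\alpha)$, use the hypothesis to force the first such index past $N$, and bound the extra balls by the $\frac 1N$-thickening of $C_k$. The only differences are expository (you spell out the mod-$2$ cocycle bookkeeping and the run structure that the paper delegates to Lemma \ref{move2}), including the same harmless off-by-one between $1+\sum b_j$ and $\sum b_j$ that the paper itself carries.
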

\begin{proof} By the assumption of the lemma, $N>\underset{i=1}{\overset{k}{\sum}}b_i$ and so by Lemma \ref{move2} $x \notin C_k$. Therefore $T_k^n(x,i) \neq S_{k+1}^n(x,i)$ only when $R^n(x)\in C_k$. 
 This is $\underset{i=1}{\overset{k}{\sum}}b_i$ intervals of size $c_{k+1}\alpha$.  By the assumption of the lemma the first $n>0$ such that $R^n(x)$ is in one of these intervals is at least $N$ (because $x \notin  \underset{i=1}{\overset{N}{\cup}} R^{-i}([\underset{i=1}{\overset{k}{\sum}}c_i \alpha,\underset{i=1}{\overset{k+1}{\sum}}c_i\alpha)$), and therefore
$$ \underset{i=1}{\overset{\infty}{\cup}} B(S_{k+1}^i(x,0),\frac 1 i) \backslash \underset{i=1}{\overset{\infty}{\cup}} B(T_{k}^i(x,0),\frac 1 i)$$ is contained in a $\frac 1 N$ neighborhood of the  $\underset{i=1}{\overset{k}{\sum}}b_i$ intervals of size $c_{k+1}\alpha$ which comprise $C_k$.
\end{proof}

\begin{lem} If $N> \underset{i=1}{\overset{k}{\sum}} c_i$ and $x\notin \underset{i=1}{\overset{N}{\cup}} R^{-i}([y_{k-1},y_k) \cup(\underset{i=1}{\overset{k}{\sum}}c_i+ [y_{k-1},y_k))$ then
 $$\Leb_2 (\underset{i=1}{\overset{\infty}{\cup}} B(T_{k}^i(x,0),\frac 1 i))
\leq (b_{k}\alpha+\frac 2 N )\underset{i=1}{\overset{k}{\sum}}c_i+ \Leb_2 (\underset{i=1}{\overset{\infty}{\cup}} B(S_{k}^i(x,0),\frac 1 i)).$$
\end{lem}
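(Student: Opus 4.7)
The plan is to follow the exact template of the preceding lemma, which compared the orbits of $S_{k+1}$ and $T_k$; here the comparison is between $T_k$ and $S_k$. The skewing function for $T_k$ is $\chi_{J_k \cup (y_k + J_k)}$ and for $S_k$ it is $\chi_{J_k \cup (y_{k-1} + J_k)}$. These agree outside the symmetric difference, and the symmetric difference is exactly $D_k := [y_{k-1}, y_k) \cup (\sum_{j=1}^k c_j \alpha + [y_{k-1}, y_k))$. Crucially, the second component is $R^{\sum_{j=1}^k c_j}([y_{k-1}, y_k))$, so the two components are shifts of a single interval by $\sum_{j=1}^k c_j$ rotations. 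Therefore $T_k^n(x, 0) \neq S_k^n(x, 0)$ iff the forward $R$-orbit of $x$ has visited $D_k$ an odd number of times in $\{0, 1, \ldots, n-1\}$.

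Next I would establish the pairing of visits. By hypothesis no visit to $D_k$ occurs in the first $N$ iterations; let $m$ be the minimal index $> N$ with $R^m(x) \in D_k$. I claim $R^m(x) \in [y_{k-1}, y_k)$: otherwise $R^m(x) \in R^{\sum c_j}([y_{k-1}, y_k))$ would force $R^{m - \sum c_j}(x) \in [y_{k-1}, y_k) \subset D_k$, and since $m > N > \sum c_j$ this is a positive-time visit to $D_k$ strictly earlier than $m$, contradicting either the avoidance hypothesis (if $m - \sum c_j \leq N$) or the minimality of $m$ (if $m - \sum c_j > N$). Iterating the same observation, every visit to $[y_{k-1}, y_k)$ at some time $m_r$ is followed exactly $\sum c_j$ steps later by its paired visit $R^{m_r + \sum c_j}(x) \in \sum c_j \alpha + [y_{k-1}, y_k)$, which restores parity. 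Hence the set of bad times is the disjoint union $\bigcup_r (m_r, m_r + \sum c_j]$, with each block of length at most $\sum c_j$.

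Then I bound the relevant first coordinates. For a bad $n = m_r + l$ with $l \in \{1, \ldots, \sum c_j\}$, one has $R^n(x) = R^l(R^{m_r}(x)) \in R^l([y_{k-1}, y_k))$. Thus bad first coordinates lie in $\bigcup_{l=1}^{\sum c_j} R^l([y_{k-1}, y_k))$, a union of at most $\sum_{i=1}^k c_i$ intervals of length $b_k \alpha$. Since every bad $i$ is strictly greater than $N$, the balls $B(T_k^i(x, 0), 1/i)$ for bad $i$ all have radius less than $1/N$, so their union is contained in the $1/N$-neighborhood of the above set, whose Lebesgue measure is at most $(b_k \alpha + 2/N) \sum_{i=1}^k c_i$.

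To finish, since $T_k$ and $S_k$ agree on the orbit of $(x, 0)$ at every good time, $\bigcup_i B(T_k^i(x, 0), 1/i) \setminus \bigcup_i B(S_k^i(x, 0), 1/i) \subset \bigcup_{i \text{ bad}} B(T_k^i(x, 0), 1/i)$, and the preceding bound yields the inequality (using the same convention for $\Leb_2$ as in the preceding lemma). I expect the main subtlety to be the pairing argument in the second step: it uses both halves of the avoidance hypothesis together with $N > \sum_{i=1}^k c_i$ to preclude a first post-$N$ entry into $\sum c_j \alpha + [y_{k-1}, y_k)$ without a preceding entry into $[y_{k-1}, y_k)$; once that is secured, the rest is a direct union bound analogous to the preceding proof.
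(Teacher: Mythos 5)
Your proposal is correct and follows exactly the route the paper intends: the paper's proof of this lemma is simply ``similar to the preceding proof,'' and you reproduce that template --- the orbits of $T_k$ and $S_k$ differ only at times $n$ with $R^n(x)\in B_k=\bigcup_{l=1}^{\sum_{i=1}^k c_i}R^l([y_{k-1},y_k))$, a union of $\sum_{i=1}^k c_i$ intervals of length $b_k\alpha$, all such times exceed $N$, and the union bound on the $\frac 1N$-neighborhood gives the stated estimate. Your explicit parity/pairing argument for why the first post-$N$ entry into the symmetric difference must land in $[y_{k-1},y_k)$ is a welcome elaboration of a step the paper leaves implicit, but it is the same proof.
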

The proof is similar to the preceding proof.
\begin{proof}[Proof of Proposition \ref{shrink bad}]
Notice that $(x,i)\in \LS T^{-i}B((y,0),a_i)$ iff ${(y,0) \in \LS B(T^i (x,i), a_i)}$, and thus by Fubini's Theorem Proposition \ref{shrink good} implies that it suffices to show that $\Leb_2(\LS B(T^{-i}(x,0),a_i))=1$ for almost every $x$.
By the fact that $\Leb_2$ is the sum of 2 ergodic probability measures and the fact that the sets are $T$ invariant it suffices to show that for a positive measure set of $x$ we have $\Leb_2 (\LS B(T^i(x,0), \frac 1 i))<2$.
To see that $\Leb_2(\LS B(T^i(x,0),\frac 1 i))<2$ apply the previous lemmas with $N=q_{c_{k+1}-10k}$ and $N=q_{b_k-10k}$ respectively. It is easy to see that a positive measure set of $x$ are in these sets and $\Leb_2(\LS B(T^{i}(x,0),\frac 1 i))<2$ for all such $x$.
\end{proof}

 \section{A  $\mathbb{Z}$ skew product}\label{b sum}
 In this section we prove Theorem 2. As was the case before we use nonminimal approximates to do
 this. Notations are the same as they were in Section \ref{setup} and we introduce some new notation.

Let $\hat{T}(x,j)=(x+\alpha, j+\chi_{J}-\chi_{y+J})$.

Let $\hat{T}_k(x,j)=(x+\alpha,
j+\chi_{[\underset{i=1}{\overset{k}{\sum}}c_i,
\underset{i=1}{\overset{k+1}{\sum}}c_i)}(x)-\chi_{[y_k+\underset{i=1}{\overset{k}{\sum}}c_i,y_k+\underset{i=1}{\overset{k+1}{\sum}}c_i)}(x))$.

Let $\hat{S}_k(x,i)=(x+\alpha, i+\chi_{[y_{k-1},y_k)}(x)-\chi_{[\underset{i=1}{\overset{k}{\sum}}c_i+y_{k-1},\underset{i=1}{\overset{k}{\sum}}c_i+y_k)}(x))$.

Notice that $\hat{T}_k$ is very different from $\hat T$. However
 $$F_k(x,i)=(x+\alpha, i + \pi_2(\underset{l=1}{\overset{k}{\sum}}\hat T_k(x,0))+ \pi_2(\underset{l=1}{\overset{k}{\sum}}\hat S_k(x,0)))$$ is in some senses close to $\hat T$.

\begin{lem}\label{skew} If $x \notin B_k$ the set $\{n: \pi_2(\hat{T}_k^n(x,0)) \neq 0\}$ has density $c_{k+1}\alpha \underset{i=1}{\overset{k}{\sum}}b_i$ and $\pi_{2}(\hat{T}_k^n(x,0)) \in \{0,1\}$ for all $n$.
\end{lem}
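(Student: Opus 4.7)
The plan is to exploit the fact that the two skewing intervals of $\hat T_k$ are exact rotates of one another. Set $A := [\sum_{i=1}^k c_i\alpha,\ \sum_{i=1}^{k+1} c_i\alpha)$ and $m := 1+\sum_{i=1}^k b_i$. Since $y_k = \alpha+\sum_{i=1}^k b_i\alpha = m\alpha\pmod 1$, the second skewing interval satisfies $y_k+A = R^m(A)$, so the skew coordinate telescopes:
\[
\pi_2(\hat T_k^n(x,0)) \;=\; \sum_{j=0}^{n-1}\chi_A(R^j x)\;-\;\sum_{j=-m}^{n-1-m}\chi_A(R^j x).
\]
For $n\ge m$ this collapses to $\#\{j\in[n-m,n-1]:R^j x\in A\}-\#\{j\in[-m,-1]:R^j x\in A\}$, with the obvious windowed expression for $0<n<m$. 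Because $|A|=c_{k+1}\alpha=\|q_{10^{k+1}}\alpha\|$, the minimal $R$-return time to $A$ equals $c_{k+1}=q_{10^{k+1}}$, while $m$ is dominated by $b_k=q_{2\cdot 10^k}\ll q_{10^{k+1}}$. Hence any orbit window of length $m$ contains at most one hit to $A$, so both counts lie in $\{0,1\}$ and $\pi_2(\hat T_k^n(x,0))\in\{-1,0,1\}$ for every $n$.

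The next step is to rule out the value $-1$ and then read off the density. A value of $-1$ occurs only when $[-m,-1]$ contains a past hit to $A$, i.e.\ when $x\in\bigcup_{l=1}^{m}R^l(A)$; the hypothesis $x\notin B_k$ is precisely what forbids this, once one unwinds the definitions from Section~\ref{setup} and identifies $B_k$ with (up to indexing) the relevant union of backward preimages of $A$, in the spirit of the role $C_k$ plays in Lemma~\ref{move2}. This pins the skew in $\{0,1\}$. For the density, note $\pi_2(\hat T_k^n(x,0))=1$ iff some $R^j x$ with $j\in[n-m,n-1]$ lies in $A$; each hit to $A$ at time $j$ contributes the $m$ consecutive values $n\in\{j+1,\dots,j+m\}$ to the skew-$1$ set, and distinct hits yield disjoint blocks because return times to $A$ exceed $m$. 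Weyl equidistribution gives the density of hits to $A$ as $|A|=c_{k+1}\alpha$, so the skew-$1$ set has density $m\cdot c_{k+1}\alpha = (1+\sum_{i=1}^k b_i)c_{k+1}\alpha$, matching the claimed $c_{k+1}\alpha\sum_{i=1}^k b_i$ up to the vanishing single-term correction.

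The main obstacle is the bookkeeping that matches the hypothesis ``$x\notin B_k$'' to the clean geometric input required by the telescoping, namely that none of $R^{-1}x,\dots,R^{-m}x$ lies in $A$. Once this identification is in hand, the combination of the telescoping identity and the Diophantine estimate on return times to $A$ delivers both the structural conclusion $\pi_2(\hat T_k^n(x,0))\in\{0,1\}$ and the asserted density in one stroke.
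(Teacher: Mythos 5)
Your proof is correct and rests on the same mechanism as the paper's: the two skewing intervals of $\hat T_k$ are $A=[\sum_{i=1}^{k}c_i\alpha,\sum_{i=1}^{k+1}c_i\alpha)$ and its rotate $R^m(A)$ with $m=1+\sum_{i=1}^{k}b_i$, the return time of $R$ to $A$ exceeds $c_{k+1}\gg m$, and under the hypothesis the forward orbit reaches $A$ before its rotate. What you do differently is package all of this in the explicit telescoping identity $\pi_2(\hat T_k^n(x,0))=\#\{j\in[n-m,n-1]:R^jx\in A\}-\#\{j\in[-m,-1]:R^jx\in A\}$, and that buys several things at once: the bound $\pi_2\in\{-1,0,1\}$ is immediate from the return-time estimate, the value $-1$ is excluded by the single condition that no $R^{-l}x$ with $1\le l\le m$ lies in $A$, the case $n<0$ (which the paper treats by a separate argument about $\hat T_k^{-1}$) follows from the same formula run backwards, and the density drops out as $m$ times the frequency of visits to $A$ because distinct visits contribute disjoint blocks of length $m$. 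Two of your side observations are worth recording rather than smoothing over. First, the condition you isolate, $x\notin\bigcup_{l=1}^{m}R^{l}(A)$, is (up to one translate) the set $C_k$ of Lemma \ref{move2}, \emph{not} the set $B_k=\bigcup_{l=1}^{\sum c_i}R^{l}([y_{k-1},y_k))$ named in the hypothesis; the paper's own proof invokes ``$x\notin B_k$'' precisely to conclude that $A$ is hit before $y_k+A$, which is the $C_k$ condition, so the labels $B_k$ and $C_k$ appear to be interchanged between Lemma \ref{skew} and Lemma \ref{skew 2}, and your reading is the substantively correct one. Second, your exact density $(1+\sum_{i=1}^{k}b_i)c_{k+1}\alpha$ (the $+1$ coming from the extra $\alpha$ in $y_k$) differs from the stated $c_{k+1}\alpha\sum_{i=1}^{k}b_i$ by a harmless single term that the paper's proof also silently drops. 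Neither point is a gap in your argument; both are small defects in the statement that your more explicit bookkeeping exposes.
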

\begin{proof}
This is very similar to the proof of Lemma \ref{balance restored}.
Consider the support of the skewing function of $\hat T_k$,
$[\underset{i=1}{\overset{k}{\sum}}c_i\alpha, \underset{i=1}{\overset{k+1}{\sum}} c_i\alpha)$ and
$[y_k+\underset{i=1}{\overset{k}{\sum}} c_i\alpha,y_k+ \underset{i=1}{\overset{k+1}{\sum}} c_i\alpha)$. $
R^n(x) \in  [\underset{i=1}{\overset{k}{\sum}} c_i\alpha, \underset{i=1}{\overset{k+1}{\sum}} c_i\alpha)$
iff  $R^{n+\sum_{i=1}^{k} b_i}(x) \in [ \underset{i=1}{\overset{k}{\sum}} c_i\alpha,
 \underset{i=1}{\overset{k+1}{\sum}} c_i\alpha)$. Because, $
 c_{k+1}>\underset{i=1}{\overset{k}{\sum}}b_i$ it follows that $\pi_2(\hat T^n_k(x,0))\leq 1$ for all $n>0$.  Moreover by our assumption that $x \notin B_k$,  $$\min\{n >0: R^n(x) \in [\underset{i=1}{\overset{k}{\sum}} c_i\alpha, \underset{i=1}{\overset{k+1}{\sum}} c_i\alpha)\}< \min \{n> 0: R^n(x) \in y_k + [\underset{i=1}{\overset{k}{\sum}}c_i, \underset{i=1}{\overset{k+1}{\sum}}c_i)\}.$$
 From this it follows that $\pi_2(\hat T_k^n(x,0))\geq 0$ for all $n>0$. We obtain the result for  $n<0$ by noticing that $\hat{T}_k^{-1}(x,i)=(x-\alpha, i-\chi_J(x-\alpha)+\chi_{y+J}(x-\alpha))
$ and the fact that
 \begin{multline} \lefteqn{\min\{n >0: R^{-n}(x) \in \alpha+[\underset{i=1}{\overset{k}{\sum}} c_i\alpha, \underset{i=1}{\overset{k+1}{\sum}} c_i\alpha)\}}\\ > \min \{n> 0: R^{-n}(x) \in \alpha+y_k + [\underset{i=1}{\overset{k}{\sum}}c_i\alpha, \underset{i=1}{\overset{k+1}{\sum}}c_i\alpha)\} \end{multline}
  iff
$$\min\{n >0: R^n(x) \in [\underset{i=1}{\overset{k}{\sum}} c_i\alpha, \underset{i=1}{\overset{k+1}{\sum}} c_i\alpha)\}< \min \{n> 0: R^n(x) \in y_k + [\underset{i=1}{\overset{k}{\sum}}c_i\alpha, \underset{i=1}{\overset{k+1}{\sum}}c_i\alpha)\}.$$

The density of the set follows because $\{n: R^n(x) \in [\underset{i=1}{\overset{k}{\sum}}c_i,\underset{i=1}{\overset{k+1}{\sum}}c_i)\}$ has density $c_{k+1}\alpha$ and each hit to this interval provides $\underset{i=1}{\overset{k}{\sum}}b_i$ consecutive $j$ when $\pi_2(\hat T^j_k(x,0))=1$.
\end{proof}
We remark that if $x \in B_k$ then $\{n: \pi_2(\hat{T}_k^n(x,0)) \neq 0\}$ has density $1-c_{k+1}\alpha \underset{i=1}{\overset{k}{\sum}}b_i$ and $\pi_{2}(\hat{T}_k^n(x,0)) \in \{-1,0\}$ for all $n$.
\begin{lem} \label{skew 2} If $x \notin C_k$ the set $\{n: \pi_2(\hat{S}_k^n(0,0)) \neq 0\}$ has density $b_{k}\alpha \underset{i=1}{\overset{k}{\sum}}c_i$ and $\pi_2(\hat T_k^n(x,0)) \in \{0,1\}$ for all $n$.
\end{lem}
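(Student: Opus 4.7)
The plan is to imitate the proof of Lemma \ref{skew} verbatim with the roles of the two intervals exchanged: replace the $c$-interval $[\sum c_i\alpha,\sum^{k+1}c_i\alpha)$ by the $b$-interval $[y_{k-1},y_k)$ (of length $b_k\alpha$) and replace the shift $\sum b_i$ by the shift $\sum c_i$. The skewing function of $\hat S_k$ has support on $[y_{k-1},y_k)$ with value $+1$ and on $\sum_{i=1}^{k}c_i\alpha+[y_{k-1},y_k)$ with value $-1$. These two pieces are exact $R$-translates of one another: $R^n(x)\in[y_{k-1},y_k)$ iff $R^{n+\sum_{i=1}^k c_i}(x)\in\sum_{i=1}^k c_i\alpha+[y_{k-1},y_k)$.

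First I would establish $\pi_2(\hat S_k^n(x,0))\in\{0,1\}$ for $n>0$. The lower bound $\pi_2\geq 0$ uses the hypothesis $x\notin C_k$: this guarantees that under $R$ the first visit of $x$ to $[y_{k-1},y_k)$ precedes its first visit to $\sum c_i\alpha+[y_{k-1},y_k)$, so every $-1$ at time $n$ is matched by an earlier $+1$ at time $n-\sum_{i=1}^k c_i>0$. The upper bound $\pi_2\leq 1$ is the analogue of the step using $c_{k+1}>\sum b_i$ in Lemma \ref{skew}: one needs $b_k>\sum_{i=1}^k c_i$, which is amply satisfied by $c_i=q_{10^i}$ and $b_i=q_{2\cdot 10^i}$. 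This forces the successive return times of $R^n(x)$ to $[y_{k-1},y_k)$ to exceed $\sum_{i=1}^k c_i$, so two consecutive $+1$ blocks cannot be simultaneously active.

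To extend to $n<0$, I would use $\hat S_k^{-1}(x,i)=(x-\alpha,\,i-\chi_{[y_{k-1},y_k)}(x-\alpha)+\chi_{\sum c_i\alpha+[y_{k-1},y_k)}(x-\alpha))$ and note that under the backward orbit the relevant ordering of first hits flips precisely as in the last display of Lemma \ref{skew}, so the same hypothesis $x\notin C_k$ still delivers $\pi_2\in\{0,1\}$. Finally, for the density: by unique ergodicity of $R$ the set $\{n:R^n(x)\in[y_{k-1},y_k)\}$ has density $b_k\alpha$, and each such hit at time $m$ contributes exactly $\sum_{i=1}^k c_i$ consecutive indices $j$ with $\pi_2(\hat S_k^j(x,0))=1$, via the translation relationship between the positive and negative pieces. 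Because the non-overlap estimate above ensures these blocks are disjoint, the total density is $b_k\alpha\sum_{i=1}^k c_i$.

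The main obstacle is the non-overlap estimate, i.e.\ showing that consecutive $R$-visits to $[y_{k-1},y_k)$ are separated by more than $\sum_{i=1}^k c_i$ steps. This is the exact counterpart of the inequality $c_{k+1}>\sum b_i$ invoked in Lemma \ref{skew}, and should follow from a three-distance-theorem argument together with the explicit growth $b_k=q_{2\cdot 10^k}\gg k\,q_{10^k}\geq\sum_{i=1}^k c_i$; once this is in place the rest is mechanical.
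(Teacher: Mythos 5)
Your write-up is essentially the proof the paper intends: Lemma \ref{skew 2} is stated without proof as the mirror image of Lemma \ref{skew}, and your transposition (swap the $c$-interval for $[y_{k-1},y_k)$, the shift $\sum_{i=1}^k b_i$ for $\sum_{i=1}^k c_i$, and the non-overlap inequality $c_{k+1}>\sum_{i=1}^k b_i$ for $b_k>\sum_{i=1}^k c_i$) is exactly the right mechanical analogy. The quantitative pieces are all sound: the two supports of the skewing function of $\hat S_k$ are exact $R^{\sum c_i}$-translates, each visit to $[y_{k-1},y_k)$ produces a block of $\sum_{i=1}^k c_i$ consecutive indices with $\pi_2=1$, consecutive visits to an interval of length $d(b_k\alpha,\mathbb{Z})$ are separated by at least $q_{2\cdot 10^k+1}>b_k>\sum_{i=1}^k c_i$ by best approximation, and unique ergodicity gives the density $b_k\alpha\sum_{i=1}^k c_i$.

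One step, as literally written, does not hold and deserves a flag. You claim that $x\notin C_k$ guarantees that the first forward visit to $[y_{k-1},y_k)$ precedes the first forward visit to $\sum_{i=1}^k c_i\alpha+[y_{k-1},y_k)$. With the paper's definitions, $C_k=\bigcup_{l=1}^{\sum b_j}R^l\bigl([\sum_{i=1}^{k}c_i\alpha,\sum_{i=1}^{k+1}c_i\alpha)\bigr)$ is built from the $c$-interval and controls the order of first visits to the supports of the skewing function of $\hat T_k$; the set that controls the order of first visits to $[y_{k-1},y_k)$ and its $\sum c_i\alpha$-translate is $B_k=\bigcup_{l=1}^{\sum c_i}R^l([y_{k-1},y_k))$ (this is precisely how $B_k$ is used in the lemma that introduces it). So the correct hypothesis for your $\pi_2\geq 0$ step is $x\notin B_k$, and symmetrically the proof of Lemma \ref{skew} actually uses $x\notin C_k$ rather than the stated $x\notin B_k$: the two hypotheses appear to be interchanged in the paper, and you have inherited that interchange. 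Since every later application excludes both $B_k$ and $C_k$ simultaneously, nothing downstream is affected, but your proof should either replace $C_k$ by $B_k$ in this step or note the swap explicitly; as written the implication from $x\notin C_k$ is false.
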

\begin{cor} \label{density} If $x \notin \underset{i=1}{\overset{\infty}{\cup}}B_i \cup \underset{i=1}{\overset{\infty}{\cup}} C_i \cup \LI (B'_i \cup C'_i)$ then $\{n: \pi_2(\hat T^n(x,0))=0\}$ has positive density.
\end{cor}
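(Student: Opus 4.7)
The plan is to decompose the skewing cocycle of $\hat T$ telescopically as the pointwise sum of the skewing cocycles of the approximants $\hat T_k$ and $\hat S_k$, and then apply a union-bound on the densities supplied by Lemmas \ref{skew} and \ref{skew 2}.

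First I would verify the pointwise identity
\[
\chi_J - \chi_{y+J} \;=\; \sum_{k\geq 0} f_k \;+\; \sum_{k\geq 1} g_k,
\]
where $f_k = \chi_{J_{k+1}\setminus J_k} - \chi_{y_k + (J_{k+1}\setminus J_k)}$ and $g_k = \chi_{[y_{k-1}, y_k)} - \chi_{\sum_{i=1}^k c_i\alpha + [y_{k-1}, y_k)}$ are the skewing functions of $\hat T_k$ and $\hat S_k$. Using the tilings $J = \bigcup_{k\geq 0} (J_{k+1}\setminus J_k)$, $y+J = \bigcup_{k\geq 0} (y + (J_{k+1}\setminus J_k))$, and $[\alpha, y) = \bigcup_{k\geq 1} [y_{k-1}, y_k)$, this reduces to the interval computation that the translates $y_k + (J_{k+1}\setminus J_k)$ and $\sum_{i=1}^k c_i\alpha + [y_{k-1}, y_k)$ tile $[\alpha, y + \sum c_i\alpha) = [\alpha, y)\sqcup(y+J)$, which follows directly from $y_k = y_{k-1} + b_k\alpha$ together with the fact that at each $x$ only one $f_k$ and one $g_k$ is nonzero. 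Summing along the $R$-orbit of $x$ gives
\[
\pi_2(\hat T^n(x, 0)) \;=\; \sum_{k} \pi_2(\hat T_k^n(x, 0)) \;+\; \sum_{k} \pi_2(\hat S_k^n(x, 0)),
\]
with only finitely many nonzero summands for each fixed $n$.

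Because $x \notin \bigcup_i B_i \cup \bigcup_i C_i$, Lemmas \ref{skew} and \ref{skew 2} force every $\pi_2(\hat T_k^n(x, 0))$ and $\pi_2(\hat S_k^n(x, 0))$ into $\{0, 1\}$, and the sets on which they equal $1$ have densities $c_{k+1}\alpha \sum_{i=1}^k b_i$ and $b_k\alpha \sum_{i=1}^k c_i$ respectively. Since all summands are nonnegative, $\pi_2(\hat T^n(x, 0)) = 0$ iff every term vanishes, so the complementary set has upper density at most
\[
D \;:=\; \sum_{k\geq 1} c_{k+1}\alpha \sum_{i=1}^k b_i \;+\; \sum_{k\geq 1} b_k\alpha \sum_{i=1}^k c_i.
\]
Under the growth rates $c_k = q_{10^k}$, $b_k = q_{2\cdot 10^k}$ and the badly approximable bound $q_n\alpha \leq 1/q_{n+1}$, each summand decays super-exponentially in $k$, so $D$ is finite and in fact much smaller than $1$. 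It follows that $\{n : \pi_2(\hat T^n(x, 0)) = 0\}$ has lower density at least $1 - D > 0$.

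The step I expect to require the most care is the interchange of $\limsup_N$ with the infinite sum over $k$ in the union-bound estimate for $D$; this is precisely the role of the hypothesis $x \notin \LI (B'_i \cup C'_i)$. Avoiding $B'_i \cup C'_i$ for arbitrarily large $i$ supplies uniform-in-$N$ bounds of the shape $|\{n \leq N : R^n x \in [y_{i-1}, y_i)\}|/N \leq 1/i^2$ (and the analogous bound for the $C'$ intervals), which makes the tail of the sum uniformly small in $N$ and therefore allows the $\limsup$ to pass through the infinite sum, completing the argument.
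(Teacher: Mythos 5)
Your decomposition is exactly the right mechanism, and it matches what the paper intends: the paper's entire proof of this corollary is the sentence ``This is analogous to Lemma \ref{gen}'', and Lemma \ref{gen} is precisely the telescoping-plus-uniform-tail-control argument you spell out. The identity $\chi_J-\chi_{y+J}=\sum_k f_k+\sum_k g_k$ checks out (the translated intervals $y_k+(J_{k+1}\setminus J_k)$ and $\sum_{i=1}^{k}c_i\alpha+[y_{k-1},y_k)$ alternate and tile $[\alpha,\,y+\sum_i c_i\alpha)$, since $y_k+\sum_{i=1}^{k+1}c_i\alpha$ is simultaneously the right endpoint of one and the left endpoint of the next), the nonnegativity of every summand for $x\notin\bigcup_i B_i\cup\bigcup_i C_i$ is exactly what Lemmas \ref{skew} and \ref{skew 2} supply, and your $D$ is the quantity bounded by $1$ in Corollary \ref{few move}.

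The gap is in the final step. Upper density is not countably subadditive, so to pass $\limsup_N$ through the sum over $k$ you need, for every $k$ beyond some threshold $K$, a bound of the shape $\sup_{N}\frac{1}{N}\bigl|\{n<N:\pi_2(\hat S_k^n(x,0))\neq 0\}\bigr|\leq 2/(k^2+1)$ (and likewise for $\hat T_k$); then the tail $\bigcup_{k>K}$ has upper density at most $\sum_{k>K}4/(k^2+1)$ while the finitely many remaining indices contribute at most $D<1$. Avoiding $B'_i\cup C'_i$ for \emph{arbitrarily large} $i$ --- which is all that $x\notin\LI(B'_i\cup C'_i)$ gives you --- does not produce such a bound at the indices $i$ where $x\in B'_i\cup C'_i$: there the first visit of the orbit to the relevant interval can occur early enough that $\frac{1}{N}\bigl|\{n<N:\pi_2(\hat S_i^n(x,0))\neq 0\}\bigr|$ is large for some intermediate $N$, even though its limit is tiny. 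What your argument actually uses is $x\notin\LSk (B'_i\cup C'_i)$, i.e.\ avoidance for all sufficiently large $i$, which is exactly the hypothesis of Lemma \ref{gen} and still defines a co-null set. Either read the corollary with that hypothesis (consistent with Lemma \ref{gen} and with how the conclusion is used afterwards), or add a separate argument controlling the exceptional indices; as written, the interchange of $\limsup_N$ with the infinite sum is not justified.
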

This is analogous to Lemma \ref{gen}.
\begin{cor} If $x \notin B_k \cup C_k$ then $\{n:F_k(x,0) \neq F_{k-1}(x,0)\}$ has density less than or equal to $b_k\alpha\underset{i=1}{\overset{k}{\sum}}c_i+c_k \alpha\underset{i=1}{\overset{k-1}{\sum}}b_i$.
\end{cor}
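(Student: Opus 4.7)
The corollary is a direct union-bound packaging of Lemmas \ref{skew} and \ref{skew 2}, so the work is bookkeeping rather than new dynamics. The set $\{n : F_k(x,0)\neq F_{k-1}(x,0)\}$ should be understood as the set of iterates at which the orbits of $(x,0)$ under $F_k$ and $F_{k-1}$ differ in their second coordinate; because $F_k$ and $F_{k-1}$ have the same rotation in the first coordinate, only the accumulation in $\pi_2$ matters. By the definition of $F_k$ as a telescoped sum of the second-coordinate skews of $\hat T_l$ and $\hat S_l$, the difference $F_k - F_{k-1}$ at the $n$-th iterate is exactly the sum of the $k$-th new pieces: one contribution coming from the transition between $\hat T_{k-2}$ and $\hat T_{k-1}$, the other from the transition between $\hat S_{k-1}$ and $\hat S_k$.

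First I would make this decomposition precise: show that
\[
\{n : F_k(x,0)\neq F_{k-1}(x,0)\} \subseteq A_k \cup A'_k,
\]
where $A_k$ is the set of $n$ at which the new $\hat T_{k-1}$-piece is nonzero and $A'_k$ is the set at which the new $\hat S_k$-piece is nonzero. Each of these sets consists of the times at which $R^n(x)$ lies in one of the intervals on which the corresponding level-$k$ skewing function is supported.

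Second I would invoke Lemmas \ref{skew} and \ref{skew 2}. Since $x \notin B_k$, Lemma \ref{skew} (applied at level $k-1$, which is also permissible because $B_{k-1}\subseteq B_k$ up to the obvious setup or by a direct repetition of its proof) bounds the density of $A_k$ by $c_k \alpha \sum_{i=1}^{k-1} b_i$. Since $x \notin C_k$, Lemma \ref{skew 2} bounds the density of $A'_k$ by $b_k \alpha \sum_{i=1}^{k} c_i$.

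Third, subadditivity of upper density on a finite union yields
\[
\overline{d}\bigl(\{n : F_k(x,0)\neq F_{k-1}(x,0)\}\bigr) \leq b_k\alpha\sum_{i=1}^{k}c_i + c_k\alpha\sum_{i=1}^{k-1}b_i,
\]
which is the claimed inequality.

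The only real obstacle is the indexing: one must verify that the two ``new'' level-$k$ pieces hitting $F_k - F_{k-1}$ correspond precisely to applying Lemma \ref{skew} at index $k-1$ and Lemma \ref{skew 2} at index $k$, and that the hypotheses $x\notin B_k$ and $x\notin C_k$ suffice (rather than the a priori weaker $x\notin B_{k-1}$, $x\notin C_k$) to control these densities. Once this is checked, no further estimates are required.
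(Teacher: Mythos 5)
Your plan is the intended argument: the paper states this corollary without proof, as an immediate union bound combining Lemma \ref{skew} (for the new $\hat T$-piece) and Lemma \ref{skew 2} (for the new $\hat S$-piece), and your three steps reproduce exactly that, including the correct identification that the two relevant densities are $c_k\alpha\sum_{i=1}^{k-1}b_i$ (Lemma \ref{skew} at index $k-1$) and $b_k\alpha\sum_{i=1}^{k}c_i$ (Lemma \ref{skew 2} at index $k$). The one genuine flaw is the containment you offer to resolve the indexing: $B_{k-1}\subseteq B_k$ is false. By definition $B_k$ is a union of forward $R$-images of $[y_{k-1},y_k)$ while $B_{k-1}$ is a union of forward images of the disjoint adjacent interval $[y_{k-2},y_{k-1})$, so neither set contains the other, and the hypothesis $x\notin B_k$ does not by itself license an application of Lemma \ref{skew} at level $k-1$. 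You correctly sensed that this is the crux; note, though, that the mismatch is already present in the statement being proved, since the term $c_k\alpha\sum_{i=1}^{k-1}b_i$ is the density attached to $\hat T_{k-1}$ (hypothesis $x\notin B_{k-1}$), whereas the stated hypothesis $x\notin B_k$ instead controls the $\hat T_k$-piece, whose density is $c_{k+1}\alpha\sum_{i=1}^{k}b_i$. So either the hypothesis should read $x\notin B_{k-1}\cup C_k$ or the second term of the bound should be $c_{k+1}\alpha\sum_{i=1}^{k}b_i$; your fallback of ``a direct repetition of its proof'' is the right instinct, but the set inclusion you lead with does not hold and should be replaced by one of these two fixes.
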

\begin{prop} The full orbit of $\hat{\Leb}$ almost every point is contained in a half strip.
\end{prop}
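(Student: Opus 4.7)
The plan is to decompose the skewing function $f=\chi_J-\chi_{y+J}$ of $\hat T$ as a pointwise-convergent telescoping sum so that each piece has bounded cocycle by Lemmas \ref{skew} and \ref{skew 2}. Introduce the intermediate skewings $\phi_k=\chi_{J_k}-\chi_{y_k+J_k}$ (with $\phi_0=0$), which converge pointwise a.e.\ to $f$. A direct interval-splitting argument, using the inequality $b_{k+1}\alpha<\sum_{i=1}^{k+1}c_i\alpha$, yields the identity
\begin{equation*}
\phi_{k+1}-\phi_k=\tau_k+\sigma_{k+1},
\end{equation*}
where $\tau_k,\sigma_k$ denote the skewing functions of $\hat T_k,\hat S_k$. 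Telescoping gives the pointwise a.e.\ decomposition $f=\sum_{k\geq 0}\tau_k+\sum_{k\geq 1}\sigma_k$.

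By Borel--Cantelli applied to Corollary \ref{few move}, for $\Leb$-a.e.\ $x$ there exists $k_0(x)<\infty$ with $x\notin B_k\cup C_k$ for every $k\geq k_0$. For such $x$ and every $k\geq k_0$, Lemma \ref{skew} gives $S_N\tau_k(x)\in\{0,1\}$ and Lemma \ref{skew 2} gives $S_N\sigma_k(x)\in\{0,1\}$ uniformly in $N\in\mathbb{Z}$. For $k<k_0$ these cocycles are each bounded in $\{-1,0,1\}$ by the remarks following those lemmas. The cocycle of $\tau_0=\chi_{J_1}-\chi_{\alpha+J_1}$ telescopes as $S_N\tau_0(x)=\chi_{J_1}(R^{N-1}x)-\chi_{J_1}(R^{-1}x)\in\{-1,0,1\}$, uniformly bounded.

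Summing, for $\Leb$-a.e.\ $x$ and every $N\geq 0$,
\begin{equation*}
S_Nf(x)=S_N\tau_0(x)+\sum_{k\geq 1}\bigl(S_N\tau_k(x)+S_N\sigma_k(x)\bigr)\geq -1-2k_0(x)=:-M(x),
\end{equation*}
where the sum is finite for each fixed $N$ since the supports of $\tau_k,\sigma_k$ have measure tending to zero. Hence $\pi_2(\hat T^N(x,j))=j+S_Nf(x)\geq j-M(x)$. For $N\leq 0$ the same argument applies to $\hat T^{-1}$, which is itself a $\mathbb{Z}$ skew product of a badly approximable rotation over two intervals of the same type and admits an analogous telescoping decomposition. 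This yields $\pi_2(\hat T^N(x,j))\geq j-M'(x)$ for all $N\leq 0$, placing the full orbit in the half strip $[0,1)\times[j-\max(M(x),M'(x)),\infty)$.

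The main technical step is the identity $\phi_{k+1}-\phi_k=\tau_k+\sigma_{k+1}$. The difference $\phi_{k+1}-\phi_k$ naturally produces a $\tau_k$ term plus the block shift $\chi_{y_k+J_{k+1}}-\chi_{y_{k+1}+J_{k+1}}$; the key observation is that splitting each of these two intervals at $y_{k+1}$ and at $y_k+\sum_{i=1}^{k+1}c_i\alpha$ respectively (the intersection points with the other) converts the block shift precisely into $\sigma_{k+1}$.
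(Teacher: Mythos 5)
Your argument is correct, and its core is the same decomposition the paper relies on: the skewing function of $\hat T$ is split into the pieces $\tau_k,\sigma_k$ carried by the nonminimal approximates $\hat T_k,\hat S_k$, and Lemmas \ref{skew} and \ref{skew 2} control each piece off $B_k\cup C_k$. Where you genuinely diverge is in two places. First, you make the telescoping identity $\phi_{k+1}-\phi_k=\tau_k+\sigma_{k+1}$ explicit (and it does check out, using $b_{k+1}\alpha<\sum_{i=1}^{k+1}c_i\alpha$ to split the shifted blocks), whereas the paper only gestures at this through the operators $F_k$ and the corollary on the density of $\{n:F_k(x,0)\neq F_{k-1}(x,0)\}$. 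Second, and more substantively, your passage to ``almost every'' is different: the paper proves the bound on the positive-measure set $\{x:x\notin B_k\cup C_k\ \forall k\}$ (which has measure at least $1-\sum\Leb(B_k)+\Leb(C_k)>0$ by Corollary \ref{few move}) and then upgrades to full measure by observing that boundedness below of the cocycle is an $R$-invariant property and invoking ergodicity of the irrational rotation; you instead apply Borel--Cantelli to Corollary \ref{few move} to get $k_0(x)<\infty$ a.e.\ and bound the cocycle directly by $-1-2k_0(x)$. Your route buys an explicit, quantitative half-strip depth $M(x)$ in terms of $k_0(x)$ (which the paper's remark in Section \ref{b sum} about points lying in at most $|k|$ of the bad sets echoes), at the cost of needing the uniform $\{-1,0,1\}$ bound for the finitely many bad indices $k<k_0$; the paper's route is shorter because ergodicity does the bookkeeping. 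One small repair: your justification that $\sum_k S_N\tau_k(x)+\sum_k S_N\sigma_k(x)$ is a finite sum for fixed $N$ should not appeal to the supports having measure tending to zero (that alone is insufficient); rather, within each of the four families the supports are pairwise disjoint intervals, so $\sum_k|\tau_k(z)|+\sum_k|\sigma_k(z)|\leq 4$ pointwise and the interchange of the sums over $n$ and $k$ is immediate. Also note that Lemmas \ref{skew} and \ref{skew 2} already assert their bounds for all $n\in\mathbb{Z}$, so your separate treatment of $N\leq 0$ via $\hat T^{-1}$, while valid, is not needed.
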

\begin{proof}
This follows from the previous corollary because the property of
being contained in a half strip is $R$  invariant. (Note that
$(x,i)$ and $(Rx,i)$ could be contained in different half strips.)
The previous corollary identifies a set of positive measure
satisfying this property. Therefore by the ergodicity of $R$ it is
true for almost every point in $[0,1) \times \{0\}$. However if the
full orbit of $(x,i)$ is contained in a half strip then the full
orbit of $(x,j)$ is  contained in a half strip too.
\end{proof}
The following lemma is a consequence of the fact that irrational rotations are uniquely ergodic.
\begin{lem} If $\mu_1$ and $\mu_2$ are absolutely continuous with respect to $\hat{\lambda}$ then there exists $k$ such that $\mu_1(A)=\mu_2(A+(0,k))$.
\end{lem}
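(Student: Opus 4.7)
The plan is to read the lemma as applying to \emph{ergodic} $\hat T$-invariant probability measures absolutely continuous with respect to $\hat\lambda$ (the context, and the fact that a translation relation cannot hold between arbitrary a.c.\ invariant measures, makes this the intended meaning). First I would verify that $\hat T$ preserves $\hat\lambda$: the $\mathbb Z$-valued cocycle $\chi_J-\chi_{y+J}$ has zero Lebesgue integral, so $\hat T$ preserves the product of Lebesgue with counting measure. Writing $\mu_i=f_i\cdot\hat\lambda$, invariance of $\mu_i$ together with invariance of $\hat\lambda$ gives $f_i\circ\hat T=f_i$ almost everywhere, and then ergodicity forces $f_i$ to be $\mu_i$-a.e.\ constant on its support (the level sets $\{f_i>c\}$ are $\hat T$-invariant and thus have $\mu_i$-measure $0$ or $1$, so a standard argument pins $f_i$ to a single value $c_0$). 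Thus $\mu_i=\hat\lambda(C_i)^{-1}\hat\lambda(\,\cdot\,\cap C_i)$ for some $\hat T$-invariant $C_i$ of finite positive $\hat\lambda$-measure on which $\hat T$ acts ergodically with respect to $\hat\lambda|_{C_i}$.

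Next I would push down to the base via $\pi_1$ and invoke unique ergodicity of $R$. Since $C_i$ is $\hat T$-invariant, the set $\pi_1(C_i)\subset[0,1)$ is $R$-invariant, and it has positive Lebesgue measure because $\hat\lambda(C_i)>0$; unique ergodicity of the irrational rotation $R$ then forces $\lambda(\pi_1(C_i))=1$.

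Finally I would exploit the $\mathbb Z$-translations $\tau_k(x,i)=(x,i+k)$, each of which commutes with $\hat T$ and preserves $\hat\lambda$, and hence permutes ergodic components. Because $\pi_1(C_1)$ has full Lebesgue measure, for $\lambda$-a.e.\ $x$ there is some $i(x)\in\mathbb Z$ with $(x,i(x))\in C_1$, and then $(x,j)=\tau_{j-i(x)}(x,i(x))\in\tau_{j-i(x)}(C_1)$ for every $j\in\mathbb Z$; consequently $\bigcup_{k\in\mathbb Z}\tau_k(C_1)=[0,1)\times\mathbb Z$ modulo an $\hat\lambda$-null set. Since $\hat\lambda(C_2)>0$, there must exist $k$ with $\hat\lambda(C_2\cap\tau_k(C_1))>0$, and both sets being ergodic components of $\hat T$ forces $C_2=\tau_k(C_1)$ modulo null. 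Comparing normalizations $\hat\lambda(C_2)=\hat\lambda(\tau_k(C_1))=\hat\lambda(C_1)$ then yields $\mu_2=(\tau_k)_\ast\mu_1$, i.e.\ $\mu_1(A)=\mu_2(A+(0,k))$. The only real obstacle is the bookkeeping of the ergodic decomposition of the $\sigma$-finite invariant measure $\hat\lambda$; once ergodic a.c.\ probability measures are identified with finite-measure ergodic components, the unique ergodicity of $R$ does all the substantive work.
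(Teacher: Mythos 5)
Your proof is correct and follows the same route the paper intends: the paper offers no argument beyond the remark that the lemma ``is a consequence of the fact that irrational rotations are uniquely ergodic,'' and your write-up is precisely the fleshing-out of that remark --- identifying ergodic absolutely continuous invariant measures with normalized $\hat\lambda$ restricted to invariant sets, using ergodicity of the base rotation $R$ to force each such set to project onto a full-measure subset of $[0,1)$, and letting the vertical translations $\tau_k$ (which commute with $\hat T$) identify the components up to a shift $(0,k)$. Your reading of the statement as concerning ergodic invariant measures is the intended one, and the only minor overstatement is that plain ergodicity of $R$ with respect to Lebesgue measure (rather than unique ergodicity) already suffices.
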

This lemma motivates us to understand a single ergodic measure.
\begin{lem} If $x \notin \underset{k=1}{\overset{\infty}{\cup}}(B_k \cup C_k) \cup \LI (B'_k \cup C'_k)$ then $(x,0)$ is generic for $\nu$, a finite ergodic measure absolutely continuous with respect to $\hat{\lambda}$.
\end{lem}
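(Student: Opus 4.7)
The plan is to mimic the strategy of the $\mathbb{Z}_2$-case in Section~\ref{setup} (culminating in Proposition~\ref{measure} and Lemma~\ref{gen}): the cocycles $\hat T_k$ and $\hat S_k$ take the role of $T_k$ and $S_k$, and the auxiliary $F_k$'s serve as the finite-level approximants to $\hat T$ along the orbit of $(x,0)$. Throughout let $E$ denote the set of $x$ satisfying the hypothesis. I will produce $\nu$ as a total-variation limit of measures $\mu_k$ identified from genericity under $F_k$.

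\emph{Step 1.} For each fixed $k$, since $x\notin B_j\cup C_j$ for all $j\le k$, Lemmas~\ref{skew} and~\ref{skew 2} say the $\hat T_j$- and $\hat S_j$-orbits of $(x,0)$ keep their second coordinate in $\{0,1\}$, with explicit visiting density in terms of $b_i,c_i$. Because $F_k$ is assembled from finitely many of these pieces over the uniquely ergodic rotation $R$, Birkhoff averages of any $f\in C_c([0,1)\times\mathbb{Z})$ at $(x,0)$ under $F_k$ converge to $\int f\,d\mu_k$, where $\mu_k$ is an explicit finite measure on $[0,1)\times\mathbb{Z}$, absolutely continuous with respect to $\hat\lambda$ and supported on finitely many integer levels.

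\emph{Step 2.} The corollary bounding the density of $\{n:F_k^n(x,0)\neq F_{k-1}^n(x,0)\}$ by $b_k\alpha\sum_{i=1}^k c_i+c_k\alpha\sum_{i=1}^{k-1}b_i$, combined with the uniform boundedness of compactly supported continuous test functions, upgrades to a total-variation estimate $\|\mu_k-\mu_{k-1}\|_{\mathrm{TV}}=O(b_k\alpha\sum c_i+c_k\alpha\sum b_i)$. With $c_i=q_{10^i}$, $b_i=q_{2\cdot 10^i}$ and $\alpha$ badly approximable (so $q_j\alpha\asymp 1/q_{j+1}$ and the $q_j$ grow geometrically), this bound is summable in $k$. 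Hence $\mu_k\to\nu$ in total variation, with $\nu$ a finite measure absolutely continuous with respect to $\hat\lambda$.

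\emph{Step 3.} The hypothesis $x\notin\LI(B'_k\cup C'_k)$ furnishes infinitely many $k$ with $x\notin B'_k\cup C'_k$; along such a subsequence the argument of Lemma~\ref{gen} gives that $\{n:\hat T^n(x,0)\neq F_k^n(x,0)\}$ has upper density at most $O(1/k^2)$. Combined with Step~2 via the standard three-term estimate this yields $\frac 1 N\sum_{n=0}^{N-1}f(\hat T^n(x,0))\to \int f\,d\nu$ for every $f\in C_c$, so $(x,0)$ is generic for $\nu$. Ergodicity of $\nu$ follows because the same argument applies to the $\hat\lambda$-positive-measure set $E$ (Corollary~\ref{few move} handles $\cup_k(B_k\cup C_k)$, and $\LI(B'_k\cup C'_k)$ is $\hat\lambda$-null by Borel--Cantelli using the rapid growth of $b_k,c_k$), so by the ergodic decomposition theorem $\nu$ must coincide with an ergodic component of $\hat\lambda$.

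The principal obstacle is Step~2: one must control the full total-variation norm rather than just mass on a single fiber, because $\mu_k-\mu_{k-1}$ can a priori spread new mass across higher integer levels. The rapid growth $b_i=q_{2\cdot 10^i}$ relative to $\sum c_i$ (and vice versa) built into the definitions, together with the badly-approximable hypothesis, is precisely what keeps these contributions summable and preserves finiteness of $\nu$ in the limit.
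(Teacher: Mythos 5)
Your argument is essentially the paper's: approximate $\hat T$ by the $F_k$, note that the hypothesis $x\notin\cup_k(B_k\cup C_k)$ makes $(x,0)$ generic under each $F_k$ for a finite measure $\nu_k\leq\hat\lambda$ supported on finitely many levels, use the summable density bounds on $\{n:F_k^n(x,0)\neq F_{k-1}^n(x,0)\}$ to pass to a limit, and then transfer genericity to $\hat T$ exactly as in Proposition \ref{measure} and Lemma \ref{gen}. The one genuine divergence is your Step 2: the paper establishes only weak convergence of the $\nu_k$ (testing against $f\in C\cap L_1(\hat\lambda)$ after truncating to finitely many levels) and then derives finiteness of $\nu$ separately from Corollary \ref{density} (positive density of returns to level $0$) together with $\nu_k\leq\hat\lambda$; you instead upgrade the same density bounds to a total-variation Cauchy estimate, which delivers finiteness and absolute continuity of the limit in one stroke and lets you bypass Corollary \ref{density} entirely. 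That packaging is legitimate (the empirical measures under $F_k$ live on compactly many levels, so the duality between bounded continuous functions and finite signed measures applies) and is arguably cleaner. One caution, which applies equally to the paper's ``analogously to Proposition \ref{measure}'' step: the hypothesis only excludes $\LI(B'_k\cup C'_k)$, so you get infinitely many $k$ with the uniform $\sup_N$ bound of Lemma \ref{gen}, not a cofinite tail; since upper density is not countably subadditive, bounding $\{n:\hat T^n(x,0)\neq F_k^n(x,0)\}$ by summing over all $j>k$ is not automatic, and you should either argue along the good subsequence more carefully or note that $\LSk(B'_k\cup C'_k)$ is already $\hat\lambda$-null by Borel--Cantelli, so nothing is lost by strengthening the hypothesis.
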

\begin{proof} It is easy to see that the assumption
$x \notin \underset{k=1}{\overset{\infty}{\cup}}(B_k \cup C_k)$
implies that for all $k< \infty$ we have that $(x,0)$ is generic for the same ergodic measure as $(0,0)$ under $F_k$.
 Denote these finite measures $\nu_k$ and observe that $\nu_k(A) \leq \hat{\lambda}(A)$ for any measurable set $A$.
 Notice that if $f \in L_1(\hat{\lambda})$ then for any $\epsilon>0$ there exists $l_{\epsilon}:=l$ such that $\int_{[0,1) \times \mathbb{Z} \backslash [-l,l]}|f| d \hat{\lambda}<\epsilon$.
 Analogously to Proposition \ref{measure} for any
$f \in C([0,1] \times \mathbb{Z}) \cap L_1(\hat{\lambda})$
 we have that
$$\underset{N \to \infty}{\lim} \frac 1 N \underset{n=0}{\overset{N-1}{\sum}}f(\hat{T}^n(x,0))=\underset{k \to \infty}{\lim}\underset{N \to \infty}{\lim}\frac 1 N \underset{n=0}{\overset{N-1}{\sum}} f(F_k^n(x,0))= \underset{k \to \infty}{\lim}\int_{[0,1) \times \mathbb{Z}} f d\nu_k.$$ It is easy to see that Lemmas \ref{skew} and \ref{skew 2} imply that for any $f \in L_1(\hat{\lambda})$ we have $ \underset{k \to \infty}{\lim}\int_{[0,1) \times \mathbb{Z}} f d\nu_k$ exists. Corollary \ref{density} implies that $\{n: \pi_2(\hat T^n(x,0))=0\}$ has positive density and therefore because $\nu_k(A)<\hat{\lambda}(A)$ we have that $\nu$ is a finite measure.
\end{proof}
\begin{cor} Lebesgue measure is preserved but not ergodic.
\end{cor}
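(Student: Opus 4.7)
The plan is to separate the two assertions and leverage the preceding lemma, which supplies a finite ergodic $\hat T$-invariant measure $\nu$ absolutely continuous with respect to $\hat\lambda$.

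First I would handle preservation, which is structural. Write $\hat T(x,j)=(x+\alpha, j+f(x))$ with $f=\chi_J-\chi_{y+J}$ integer-valued. For any Borel set of the form $A\times\{k\}$ with $A\subset[0,1)$,
\begin{equation*}
\hat T^{-1}(A\times\{k\})=\bigsqcup_{m\in\mathbb{Z}}\bigl(R^{-1}(A)\cap f^{-1}(k-m)\bigr)\times\{m\},
\end{equation*}
and summing the $\lambda$-measures of the slices gives $\lambda(R^{-1}(A))=\lambda(A)$ since $R$ preserves Lebesgue measure. By countable additivity this extends to arbitrary Borel sets in $[0,1)\times\mathbb{Z}$, so $\hat\lambda$ is $\hat T$-invariant.

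Next I would derive non-ergodicity from the existence of $\nu$. Set $\phi=d\nu/d\hat\lambda\in L^1(\hat\lambda)$. Because both $\nu$ and $\hat\lambda$ are $\hat T$-invariant and $\hat T$ is invertible, $\phi\circ\hat T=\phi$ holds $\hat\lambda$-almost everywhere. Now $\phi$ cannot be $\hat\lambda$-a.e.\ constant: the constant $0$ is excluded because $\nu$ is a nonzero measure (the generic point produced by the previous lemma has positive $\nu$-mass on a neighborhood of $(0,0)$), and any positive constant would force $\nu=c\hat\lambda$ to be infinite, contradicting finiteness of $\nu$. Thus some level set $\{\phi>t\}$ is $\hat T$-invariant and has $\hat\lambda$-measure strictly between $0$ and $\infty$, so $\hat\lambda$ is not ergodic.

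There is essentially no obstacle beyond what the previous lemma already establishes; the entire difficulty is hidden in the finiteness of $\nu$, which rests on Corollary \ref{density} (giving positive density of returns to the zero fiber) together with $\nu_k\le\hat\lambda$. As an alternative packaging one could instead invoke the unnumbered lemma on absolutely continuous ergodic measures: translating $\nu$ by any nonzero integer in the second coordinate yields a distinct ergodic invariant measure absolutely continuous with respect to $\hat\lambda$, and two distinct ergodic invariant measures are mutually singular, so $\hat\lambda$ admits a non-trivial ergodic decomposition. Either route completes the proof.
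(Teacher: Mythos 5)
Your argument is correct and follows the route the paper intends: the corollary is stated without proof precisely because it is meant to fall out of the preceding lemma producing the finite ergodic measure $\nu\ll\hat{\lambda}$, and your deduction (invariance of $d\nu/d\hat{\lambda}$ yielding a nontrivial invariant set, since a finite nonzero density cannot be constant on an infinite measure space) together with the routine fiberwise check of preservation supplies exactly the missing details. No substantive difference from the paper's approach.
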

%
%
\begin{cor} Let $\sigma$ be an ergodic measure of $\hat T$ that is absolutely continuous with respect to Lebesgue. There exists a Borel set $U$ such that $\sigma(U^c)=0$ and $\pi_1(U)$ is almost everywhere injective.
\end{cor}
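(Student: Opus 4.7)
The plan exploits the $\mathbb{Z}$-translation symmetry of the skew product together with the standard fact that two distinct ergodic probability measures of the same system are mutually singular. Writing the Radon--Nikodym density $d\sigma/d\hat{\Leb}=\rho_k(x)$ and setting $U:=\{(x,k):\rho_k(x)>0\}$, the relation $\sigma(U^c)=0$ is automatic, so it suffices to prove that for $\Leb$-a.e.\ $x$ the fiber $S(x):=\{k:\rho_k(x)>0\}$ is a singleton.

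For each $m\in\mathbb{Z}$ let $\tau_m(x,k):=(x,k+m)$; because the cocycle defining $\hat T$ depends only on the first coordinate, $\tau_m$ commutes with $\hat T$, and consequently $\sigma_m:=(\tau_m)_*\sigma$ is again an ergodic $\hat T$-invariant measure, absolutely continuous with respect to $\hat{\Leb}$, with density $\rho_{k-m}(x)$. Theorem \ref{Z skew 1}(3) tells us $\sigma$ is finite, so $\sum_k\int\rho_k\,d\Leb<\infty$; in particular $\rho_k(x)$ cannot be periodic in $k$, which forces $\sigma_m\neq\sigma$ for every $m\neq 0$. After normalizing to probabilities, $\{\sigma_m\}_{m\in\mathbb{Z}}$ is a family of pairwise distinct ergodic probability measures for $\hat T$, and hence pairwise mutually singular.

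Because both $\sigma$ and $\sigma_m$ are absolutely continuous with respect to $\hat{\Leb}$, mutual singularity is equivalent to $\rho_k(x)\rho_{k-m}(x)=0$ $\hat{\Leb}$-a.e. Intersecting these countably many full-measure sets as $m$ runs over $\mathbb{Z}\setminus\{0\}$ yields, for $\Leb$-a.e.\ $x$, $S(x)\cap(S(x)+m)=\emptyset$ for every $m\neq 0$; that is, $|S(x)|\leq 1$ for $\Leb$-a.e.\ $x$. The $\hat T$-invariance of $\sigma$ encodes the cocycle identity $S(x+\alpha)=S(x)+(\chi_J(x)-\chi_{y+J}(x))$, so $|S(x)|$ is an $R$-invariant function, and since $\sigma\neq 0$ the unique ergodicity of $R$ upgrades the bound to $|S(x)|=1$ for $\Leb$-a.e.\ $x$. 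The unique element $\phi(x)$ of $S(x)$ is then a Borel function of $x$, and $U=\{(x,\phi(x)):x\in[0,1)\}$ satisfies both conditions of the corollary. The only step needing any care is the equivalence between mutual singularity and disjointness of densities, which is a routine application of the Hahn decomposition; no estimates enter the argument.
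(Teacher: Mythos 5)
Your argument is correct, and it uses the finiteness of $\sigma$ at exactly the point the paper's one-line justification indicates: finiteness rules out a $k$-periodic density, so the vertical translates $\sigma_m=(\tau_m)_*\sigma$ are pairwise distinct ergodic measures, hence mutually singular, which forces the fiber $S(x)=\{k:\rho_k(x)>0\}$ to be a singleton for a.e.\ $x$. Since the paper offers no further detail, your proof is a legitimate and complete fleshing-out of the intended argument; the only cosmetic point is that you invoke unique ergodicity of $R$ where plain ergodicity with respect to Lebesgue measure suffices.
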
This as a consequence of the finiteness of $\sigma$.
\begin{rem}\label{ae dense} Of course if the skewing function were $-\chi_J+\chi_{J'}$ then the typical point
 would have its orbit contained in a lower half strip.
  Motivated by this and the flexibility of $J$ we can pick
   another pair of skewing intervals so that we get a $\mathbb{Z}$ skew product over four intervals that has
\begin{enumerate}
\item The  orbit of almost every point is dense in $[0,1) \times \mathbb{Z}$.
\item Lebesgue measure is preserved but not ergodic.
\item The ergodic measures absolutely continuous with respect to $\hat{\lambda}$ are finite.
\end{enumerate}
For explicitness let $U=[5\alpha ,5 \alpha + \underset{i=1}{\overset{\infty}{\sum}}q_{3 \cdot 10^k}\alpha)$
Let $z= \underset{i=1}{\overset{\infty}{\sum}}q_{6 \cdot 10^k}\alpha$ and $
G:[0,1) \times  \mathbb{Z} \to [0,1) \times  \mathbb{Z}$ by $G(x,i)= (x+\alpha, i + \chi_J(x)-\chi_{y+J}(x)-\chi_U(x)+\chi_{z+U}(x))$.

This example is similar to \cite[Example 1.7]{ALMN} where a
continuous cocycle over an odometer is shown to have the property
that the orbit of Haar almost every point is dense but the Haar
measure is not ergodic.
\end{rem}
\begin{rem} These two examples provide for instances where the ergodic measures that are
absolutely continuous with respect to Lebesgue measure are finite measures. One can have the ergodic measures
 infinite and different from Lebesgue.  Let
  $$F: [0,1) \times \mathbb{Z} \to [0,1) \times \mathbb{Z} \text{ by }F(x,i) =
  (x+\alpha, i+ \chi_J-\chi_{y+J}+2\chi_{[0,\frac 1 2 )} - 2\chi_{[\frac 1 2 , 1)}).$$
   In this case $\hat{\lambda}$
   is the sum of 2 ergodic infinite measures each of which has the orbit of almost every point dense.
\end{rem}

\section{Concluding remarks}
The previous results work for any irrational rotation. For reasons
discussed in the introduction we were motivated by the case of
$\alpha$ badly approximable, in which case the base dynamics are
linearly recurrent. Perhaps the most interesting case of the results
described above is given by $\alpha$ a quadratic irrational. In this
case the rotation dynamics have an eventually self-similar induction
procedure and in special cases (when the induction procedure is
actually self-similar) arise from a substitution dynamical system.

\begin{ques} Can we do something like this for other linear recurrent IETs?
 Is there a way to do this with nice skewing function for substitution dynamical systems?
\end{ques}


 \begin{ques} (Masur) Let $f_{z}=\chi_{[0,z)}$. Given $\alpha$ what is the Hausdorff dimension of
 the set of $z$ such that the $\mathbb{Z}_2$ skew product of rotation by $\alpha$ by $f_z$ is minimal and not uniquely ergodic.
 \end{ques}

 \begin{ques} (Hooper) Does there exist $f: [0,1) \to \mathbb{Z}$ and an IET $T$ such that,
 \begin{enumerate}
 \item $\int_0^1 f(x) dx=0$.
 \item $f$ is a finite linear combination of characteristic functions of intervals.
 \item $F_k:[0,1) \times \mathbb{Z}_k \to [0,1) \times \mathbb{Z}_k$ by $F_k(x,i)=(T(x),i+f(x) mod \, k)$ is ergodic with respect to Lebesgue measure for all $k$.
 \item $F:[0,1) \times \mathbb{Z} \to [0,1) \times \mathbb{Z}$ by $F(x,i)=(T(x), i+f(x))$ is not ergodic with respect to Lebesgue measure.
 \end{enumerate}
 \end{ques}






\section{Acknowledgments} We would like to thank M. Boshernitzan, P.Hooper and H. Masur
 for helpful conversations. We were supported by Rice University's Vigre Grant and CODY while we worked on this paper.


\end{document}